\theoremstyle{plain}
\newtheorem{thm}{Theorem}[section]
\newtheorem{prop}[thm]{Proposition}
\newtheorem{cor}[thm]{Corollary}
\newtheorem{lem}[thm]{Lemma}
\theoremstyle{definition}
\newtheorem{df}{Definition}[section]
\newtheorem{ques}[thm]{Question}
\newtheorem{conj}[thm]{Conjecture}
\theoremstyle{remark}
\newtheorem{rmk}{Remark}[section]
\newtheorem*{ac}{Acknowledgements}
\newcommand{\zz}{\mathbb{Z}}
\newcommand{\qq}{\mathbb{Q}}
\newcommand{\rr}{\mathbb{R}}
\DeclareMathOperator{\card}{Card}
\newcommand{\supp}{\text{{\rm supp}}}
\newcommand{\yosub}{\subseteq}
\newcommand{\yogf}[1]{\mathbb{F}_{#1}}
\newcommand{\yocoef}[2]{\boldsymbol{C}(#1, #2)}
\newcommand{\yocase}[2]{Case #1.~[#2]:}
\newcommand{\yorcf}[2]{\mathfrak{K}(#1, #2)}
\newcommand{\yorcff}{\zeta}
\newcommand{\yovring}[2]{\mathfrak{A}(#1, #2)}
\newcommand{\yovideal}[2]{\mathfrak{o}(#1, #2)}
\newcommand{\youhaq}[4]{\mathbb{A}_{#3, #4}(#1, #2)}
\newcommand{\youhavq}[4]{U_{#1, #2,  #3, #4}}
\newcommand{\youit}{\tau}
\newcommand{\yoha}[2]{\mathbb{H}(#1, #2)}
\newcommand{\yohav}[2]{v_{#1, #2}}
\newcommand{\yopideal}[2]{\mathbb{N}_{#1, #2}}
\newcommand{\yopf}[3]{\mathbb{P}_{#3}(#1, #2)}
\newcommand{\yopfv}[3]{V_{#1, #2, #3}}
\newcommand{\yostan}[3]{\mathrm{St}_{#1, #2, #3}}
\newcommand{\yoproj}{\mathrm{Pr}}
\newcommand{\yova}{\xi}
\newcommand{\yowitto}[1]{\mathbb{W}(#1)}
\newcommand{\yowitt}[1]{\mathrm{Fr}\mathbb{W}(#1)}
\newcommand{\yowiv}[1]{w_{#1}}
\newcommand{\yorep}{J}
\newcommand{\yorepc}{C}
\newcommand{\yonabs}[3]{\lVert #3\rVert_{#1, #2}}
\newcommand{\yonval}[2]{v_{#1, #2}}
\newcommand{\yoratio}{\eta}
\newcommand{\yosmf}[1]{\boldsymbol{#1}}
\newcommand{\yosva}{\gamma}
\newcommand{\yosvaq}{\delta}
\newcommand{\yoadhf}[1]{\mathbf{AH}(#1)}
\newcommand{\yobtm}{\mathord{\circledcirc}}
\newcommand{\yopnbf}[1]{\qq_{#1}}
\newcommand{\yopnbv}[1]{v_{#1}}
\newcommand{\yogset}{\mathcal{G}}
\newcommand{\yoqpset}{\mathcal{CH}}
\newcommand{\yomainmap}{I}
\newcommand{\yosubmap}{H}
\newcommand{\yoesp}{E}
\newcommand{\yoedis}{D}
\newcommand{\yoept}{\varpi}
\newcommand{\yolga}{\Psi}
\newcommand{\yoelm}{M}
\newcommand{\yotei}{\rho}
\newcommand{\yoteiq}{\omega}
\newcommand{\id}{\mathrm{id}}
\newcommand{\yoincc}{\Gamma}
\newcommand{\yorepcard}{\varkappa}
\newcommand{\yoespcard}{\theta}
\begin{document}

\title[Embedding theorem]
{
A non-Archimedean 
Arens--Eells isometric embedding theorem
on
valued fields
}

\author[Yoshito Ishiki]
{Yoshito Ishiki}

\address[Yoshito Ishiki]
{\endgraf
Photonics Control Technology Team
\endgraf
RIKEN Center for Advanced Photonics
\endgraf
2-1 Hirasawa, Wako, Saitama 351-0198, Japan}

\email{yoshito.ishiki@riken.jp}

\date{\today}
\subjclass[2020]{Primary 54E35, 
Secondary 54E40, 
51F99}
\keywords{Ultrametrics, 
Isometric embeddings, 
and 
Non-Archimedean valued fields}

\begin{abstract}
In 1959, Arens and Eells  proved that every metric space can be isometrically embedded into a normed  linear space  as  a closed subset.  In later years, in the paper on a short proof of the Arens--Eells theorem, Michael implicitly pointed out that  the Arens--Eells theorem follows from the statement that every metric space can be isometrically embedded into a normed  linear space  as a linearly  independent  subset. In this paper, we prove  a non-Archimedean analogue of the Arens--Eells isometric embedding theorem, which states that for every non-Archimedean valued field $K$, every ultrametric space can be isometrically embedded into a non-Archimedean valued  field that is a valued field  extension of $K$ such that the image of the embedding is algebraically independent over $K$. 
\end{abstract}


\maketitle

\section{Introduction}\label{sec:intro}
In 1956, 
Arens and Eells
\cite{MR81458}
established 
that 
for 
every metric space 
$(X, d)$ 
there exist a real normed linear space 
$(V, \|*\|)$ 
and an 
isometric embedding 
$I\colon X\to V$
 such that 
$I(X)$
 is closed in 
 $V$. 
In later years, 
in the paper
\cite{MR162222}
 on 
a short proof of the
 Arens--Eells theorem, 
Michael
implicitly 
pointed out that
the Arens--Eells theorem follows from 
the statement  that  for every metric space 
$(X, d)$, 
we can take a real normed  linear 
 space 
 $(V, \lVert *\rVert)$  
 and an isometric embedding 
$I\colon X\to V$ 
such that 
$I(X)$ 
is linearly independent. 
Indeed, 
assume that the  statement
above is true, 
let 
$Y$ 
be the 
completion of 
a given metric space 
$(X, d)$, 
and take 
a normed linear space 
$W$
and 
an isometric embedding 
$J\colon Y\to W$
such that 
$J(Y)$
 is  linearly independent 
in 
$W$. 
Next 
we  let
 $V$
 be the 
linear subspace of 
$W$ 
generated by 
$J(X)$
and put 
$I=J|_{X}$. 
Since 
$J(Y)$ 
is linearly independent, 
we have 
$I(X)=V\cap J(Y)$. 
By the completeness of 
$Y$, 
we see that 
$J(Y)$
is closed in 
$W$, 
and hence 
$I(X)$ is 
closed in 
$V$, 
which is 
nothing but 
the Arens--Eells theorem. 
This argument can be found 
in 
 \cite[Footnote (${}^{1}$)]{MR321026}, 
 and
 a
 similar argument 
  will be used in the proof of 
 one of 
 our 
 main results
 (see Theorem
  \ref{thm:19main2}).

A metric 
$d$ 
on 
$X$ 
is said to be 
an 
\emph{ultrametric} 
or 
a
 \emph{non-Archimedean metric} if 
$d(x, y)\le d(x, z)\lor d(z, y)$ 
for all 
$x, y, z\in X$, 
where 
$\lor$ 
stands for the maximum operator on 
$\rr$. 
A set 
$R$ 
is 
a \emph{range set} 
if 
$0\in R$ 
and 
$R\yosub [0, \infty)$. 
An ultrametric 
$d$ 
on 
$X$ 
is 
said to be 
\emph{R-valued} 
if 
$d(x, y)\in R$
 for all 
 $x, y\in X$.

Some authors have tried   to investigate  non-Archimedean 
analogues of  theorems on metric spaces such as 
the Arens--Eells theorem. 
Megrelishvili and Shlossberg
\cite{MR3067710} proved 
a non-Archimedean 
Arens--Eells theorem, 
which embeds ultrametric spaces 
into 
linear spaces  over 
$\yogf{2}$. 
In  
\cite[Theorem 4.3]{MR3503300}, 
as an improvement of their theorem, 
they proved a 
non-Archimedean 
Arens--Eells theorem
on 
ultra-normed 
linear spaces over arbitrary 
non-Archimedean valued fields.
In 
\cite[Theorem 1.1]{Ishiki2021ultra}, 
as a non-Archimedean analogue of the Arens--Eells theorem, 
the author of the present paper showed that 
for every range set 
$R$, 
for every integral domain 
$A$ 
with the trivial absolute value 
$\lvert *\rvert$
(i.e., 
$\lvert x\rvert=1$ 
for all 
$x\neq 0$), 
and 
for every 
$R$-valued 
ultrametric space 
$(X, d)$, 
there exist  an 
$R$-valued  
ultra-normed module 
$(V, \lVert * \rVert)$ 
over 
$(A, \lvert * \rvert)$ 
and 
an isometric embedding 
$I\colon X\to V$ 
such that 
$I(X)$ 
is closed and linearly independent 
over 
$(A, \lvert *\rvert)$. 
Using this embedding theorem, 
the author proved 
a
($R$-valued)
non-Archimedean 
analogue 
of 
the Hausdorff 
extension theorem 
of metrics
(see also
\cite{MR4335845}).

There are  other attempts   to 
construct a geometric  embedding 
from an ultrametric space
into a space with an
algebraic structure. 
In \cite{MR1354831}, 
Vestfrid
constructed 
an 
 embedding of  a separable  ultrametric space into 
 a separable 
Hilbert space 
(see also \cite{MR1619924}). 
Schikhof
\cite{MR748978}
established 
that 
every ultrametric space can 
be isometrically  embedded into 
a non-Archimedean valued field
using   Hahn fields, 
which generalize
 fields of formal power series
(see also \cite{MR2584966}). 
In 
\cite[Conjecture 5.34]{MR0425916}, 
Broughan raised the 
following conjecture.

\begin{conj}\label{conj:bbb}
Let 
$p$ 
be an odd prime and 
 $(X, d)$ 
 be an 
 $H_{p}$-valued 
 metric space, 
 where 
 $H_{p}=\{0\}\cup \{\, p^{n}\mid n\in \zz\, \}$. 
Then 
there exist 
a non-Archimedean Banach algebra 
$(B, \lVert *\rVert)$ 
over 
$\qq_{p}$ 
and an isometry 
$i\colon X\to B$. 
\end{conj}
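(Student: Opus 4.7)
The plan is to deduce Conjecture~\ref{conj:bbb} as a direct corollary of the main non-Archimedean Arens--Eells embedding theorem announced in the abstract, applied to the base valued field $K = \qq_{p}$, followed by a completion argument. So the core of the work is not the conjecture itself but the preceding embedding theorem.

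First I would observe that for an odd prime $p$, every $H_{p}$-valued metric is automatically an ultrametric. Indeed, if $d(x,y) = p^{a}$ and $d(y,z) = p^{b}$ with $a \le b$, the ordinary triangle inequality gives $d(x,z) \le p^{a} + p^{b} \le 2p^{b} < p^{b+1}$, and since $d(x,z) \in H_{p}$ this forces $d(x,z) \le p^{b} = d(x,y) \lor d(y,z)$. Thus $(X,d)$ falls within the scope of the main theorem, and its distance set coincides (up to $0$) with the value group of $\qq_{p}$.

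Next, I would invoke the main embedding theorem with $K = \qq_{p}$ to obtain a valued field extension $L/\qq_{p}$ and an isometric embedding $I \colon X \to L$ with $I(X)$ algebraically independent over $\qq_{p}$. The valuation on $L$ extends the $p$-adic absolute value and is multiplicative, so $L$ is a normed $\qq_{p}$-algebra. Passing to the completion $\widehat{L}$ with respect to this valuation yields a complete non-Archimedean valued field extension of $\qq_{p}$, which in particular is a non-Archimedean Banach $\qq_{p}$-algebra. The composite $X \xrightarrow{\,I\,} L \hookrightarrow \widehat{L}$ is the required isometric embedding; the distances on $I(X)$ lie in $H_{p}$ automatically by isometry, regardless of how large the full value set of $\widehat{L}$ may be.

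The main obstacle, then, is not this deduction but the preceding embedding theorem itself, and that is where I would expect the substantive work to lie: one needs a construction that, starting from an arbitrary ultrametric space $(X,d)$ and an arbitrary non-Archimedean valued field $(K, \lvert * \rvert)$, produces a valued field extension of $K$ realising the given distances while keeping $I(X)$ algebraically independent. The natural template is Michael's proof of the classical Arens--Eells theorem, with algebraic independence over $K$ replacing linear independence over $\rr$, and a Hahn- or Levi-Civita-type construction (as in Schikhof's work cited above) used to manufacture enough room in the valued field to carry both the isometric image and the algebraic independence condition simultaneously. Matching the value set of the constructed extension to the distance set of $X$, without inflating it, is the delicate point; for the $H_{p}$-valued case needed in Baroughan's conjecture this matching is free since $H_{p}$ is already the value set of $\qq_{p}$.
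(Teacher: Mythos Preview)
Your deduction is correct and follows the same overall route as the paper: both derive the conjecture by applying the main embedding theorem over $\qq_{p}$ and reading off the Banach-algebra structure from the resulting valued field extension. Two small differences are worth flagging. First, your observation that an $H_{p}$-valued metric for odd $p$ is automatically an ultrametric is a genuine addition; the paper's Theorem~\ref{thm:frw} is stated only for ultrametric $(X,d)$ and does not explicitly bridge this gap in the conjecture as originally phrased. Second, where you pass to the completion $\widehat{L}$ of an abstract valued extension, the paper instead invokes Corollary~\ref{cor:19main11} (the special case $K=\yowitt{\yosmf{k}}$) to land directly in $\yowitt{\yosmf{k}}$ for a perfect field $\yosmf{k}$ of characteristic $p$ whose transcendence degree over $\yogf{p}$ dominates $\card(X)$; this target is already complete with value group exactly $\zz$, so no completion step is needed and the Banach algebra is concrete. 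That corollary also bypasses the divisibility hypothesis on $G$ in Theorem~\ref{thm:19main2} (since $\qq_{p}$ sits naturally inside the $p$-adic Hahn field without appealing to Proposition~\ref{prop:pembed}), so you need not enlarge the value group to $\qq$ as a literal application of the main theorem would require---though, as you note, for the conjecture itself this inflation would be harmless.
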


Note that every 
$H_{p}$-valued
 metric space is 
an ultrametric space.

In this paper, 
 as a 
generalization of 
the Schikhof theorem
(\cite{MR748978}) and 
known 
non-Archimedean analogues
(\cite[Theorem 1.1]{Ishiki2021ultra}
and \cite[Theorem 4.3]{MR3503300}) of 
the Arens--Eells theorem, 
we prove that 
for every non-Archimedean 
valued field 
$(K, \lVert *\rVert_{K} )$, 
and 
for every metric space 
$(X, d)$, 
there exist a valued field 
$(L, \lVert*\rVert_{L})$
 and an isometric embedding 
$I\colon X\to L$
such that 
$L$
 is a valued field extension of 
 $K$, 
the set 
$I(X)$ 
is closed in 
$L$, 
and 
$I(X)$
 is 
algebraically independent 
over 
$K$
(Theorem \ref{thm:19main2}). 
Our proof can be considered as 
a sophisticated version of 
Schikhof's embedding method
\cite{MR748978}.
A key point of the proof of 
Theorem 
\ref{thm:19main2} is 
to use the notion of
$p$-adic Mal'cev--Neumann fields
($p$-adic Hahn fields), 
which 
was
 first introduced by 
Poonen \cite{MR1225257} 
as
$p$-adic 
analogues of ordinary 
Hahn fields.

As an application, 
we also give an
 affirmative 
solution of Conjecture \ref{conj:bbb}
(see Theorem \ref{thm:frw}).

After proving our main results, 
 we will 
 briefly  remark that Theorem
  \ref{thm:19main2} is still valid  for generalized ultrametric spaces whose distances take values in a  general linearly ordered set
 (Theorem \ref{thm:genmain}).

The paper is organized as follows. 
Section
 \ref{sec:pre} 
presents 
notions and notations of 
metric spaces and 
valued fields. 
We introduce 
Hahn fields and 
$p$-adic Hahn fields, 
which play
an important role 
in
the proofs of
 our main results. 
We also prepare 
some basic statements on 
valued fields and Hahn fields.
In Section 
\ref{sec:propf}, 
we 
show
 statements on 
algebraic independence 
in 
($p$-adic)
Hahn fields. 
Section 
\ref{sec:isom}
is devoted 
to  
proving 
Theorem 
\ref{thm:19main2}. 
We also 
provide 
an affirmative 
solution of 
Conjecture 
\ref{conj:bbb}. 
In Section 
\ref{sec:gen}, 
we discuss an analogue of Theorem 
\ref{thm:19main2} 
for generalized ultrametric spaces whose
 distances take  values in a general linearly ordered set. 

\begin{ac}

The author would like to thank 
Tomoki Yuji for helpful 
advice on algebraic arguments.

The author wishes to express his 
deepest gratitude 
to 
all  members of 
Photonics Control Technology Team (PCTT) in
 RIKEN, 
where
the majority  of the paper was written, 
for their invaluable   support. 
Special thanks are extended to 
  the 
Principal Investigator of PCTT,
Satoshi Wada for 
the encouragement and support that 
transcended disciplinary boundaries.

This work was partially supported by JSPS 
KAKENHI Grant Number 
JP24KJ0182. 
\end{ac}


\section{Preliminaries}\label{sec:pre}

\subsection{Generalities}\label{subsec:gen}
In this paper, 
we 
use the 
set-theoretic 
notations of ordinals. 
For example, 
for ordinals 
$\alpha$,  $\beta$, 
we
 have 
 $\beta<\alpha$
  if 
 and only if 
 $\beta\in \alpha$.

\subsubsection{Metric spaces}
For a metric space 
$(X, d)$,
and 
for a subset of
 $A$ 
 of 
$X$, 
and 
for
 $x\in X$, 
we define 
$d(A, x)=\inf\{\, d(a, x)\mid a\in A\, \}$. 
For  
$a\in X$ 
and 
$r\in (0, \infty)$, 
we denote by 
$B(a, r; d)$
the closed ball 
centered at
 $a$ 
 with radius 
$r$. 
We often simply  represent it 
as
 $B(a, r)$ 
when 
no confusion can arise. 
Similarly, we define the 
open ball 
$U(a, r)$.

The proofs of 
the next two lemmas 
 are presented in 
 Propositions 
 18.2, 
 and 
 18.4, 
in 
\cite{MR2444734}, 
respectively. 
Note that 
in 
Lemma 
\ref{lem:isosceles}, 
we do not assume that 
$d(x, y)\ge 0$ 
for all 
$x, y\in X$. 

\begin{lem}\label{lem:isosceles}
Let 
$X$ 
be a set, 
and 
$d\colon X\times X\to \rr$ 
be a symmetric function, 
i.e., $d(x, y)=d(y, x)$ 
for all 
$x, y\in X$. 
Then 
$d$ 
satisfies the strong 
triangle inequality: 
$d(x, y)\le d(x, z)\lor d(z, y)$ 
if and only if 
for every triple
$x, y, z\in X$, 
the inequality 
$d(x, z)<d(z, y)$ 
implies 
$d(z, y)=d(x, y)$. 
\end{lem}

\begin{lem}\label{lem:ultraopcl}
Let 
$(X, d)$ 
be a pseudo-ultrametric space, 
$a\in X$ 
and 
$r\in [0, \infty)$. 
Then
for every  
$q\in B(p, r)$, 
we have 
$B(p, r)=B(q, r)$. 
\end{lem}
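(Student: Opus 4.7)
The plan is to prove $B(p,r) = B(q,r)$ by showing both inclusions, each following directly from the strong triangle inequality. The argument uses only symmetry of $d$ and the ultrametric axiom, so it applies equally well to the pseudo-ultrametric setting.

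First, I would observe that the hypothesis is symmetric in $p$ and $q$: since $q \in B(p,r)$ means $d(p,q) \le r$, symmetry of $d$ yields $d(q,p) \le r$, so $p \in B(q,r)$ as well. It therefore suffices to establish one of the two inclusions, as the other will follow by interchanging the roles of $p$ and $q$.

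For $B(q,r) \subseteq B(p,r)$, I would take an arbitrary $x \in B(q,r)$, so that $d(x,q) \le r$, and apply the strong triangle inequality to obtain
\[
d(x,p) \le d(x,q) \lor d(q,p) \le r,
\]
which shows $x \in B(p,r)$. The reverse inclusion $B(p,r) \subseteq B(q,r)$ follows by exactly the same argument after exchanging $p$ and $q$, which is justified by the symmetry observation above.

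I do not anticipate any real obstacle: the lemma is a direct and classical consequence of the strong triangle inequality, and in particular it does not require an appeal to Lemma \ref{lem:isosceles}. The only subtlety worth flagging is that, because we work with a pseudo-ultrametric rather than an ultrametric, $d(x,y)=0$ need not imply $x=y$; however, the argument avoids this issue entirely by relying only on symmetry and the strong triangle inequality, both of which are part of the pseudo-ultrametric axioms.
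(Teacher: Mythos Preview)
Your proof is correct and is precisely the standard argument for this classical fact. The paper does not give its own proof but simply cites Proposition~18.4 of \cite{MR2444734}; your direct verification via the strong triangle inequality is exactly what one finds there.
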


\subsubsection{Valued rings}

Let 
$A$ 
be a commutative ring. 
We say that a function 
$v\colon A\to \rr\sqcup \{\infty\}$
is 
a
\emph{(additive) valuation}
if the following conditions are satisfied:
\begin{enumerate}

\item 
for every 
$x\in A$, 
we have 
$v(x)=\infty$ 
if and only if 
$x=0$; 

\item 
for every pair 
$x, y\in A$, 
we have 
$v(xy)=v(x)+v(y)$; 

\item 
for every pair 
$x, y\in A$, 
we have 
$v(x+y)\ge v(x)\land v(y)$, 
where 
$\land$ 
stands for the 
minimum operator on 
$\rr$. 
\end{enumerate}
If
 $A$ 
 is a field, 
then it is called a 
\emph{valued field}. 
Note that for every valued ring 
$(A, v)$, 
we can extend the valuation 
$v$ 
to the fractional field 
$K$ 
of 
$A$. 
Namely, for 
$x=b/a\in K$, 
where 
$a, b\in A$ 
and 
$a\neq 0$, 
we define 
$v(x)=v(b)-v(a)$. 
In this case, 
$v$ 
is well-defined and 
the pair 
$(K, v)$ 
naturally becomes 
a valued field. 
For example, 
for a prime
 $p$, 
we define 
the 
$p$-adic 
valuation 
$\yopnbv{p}$
on 
$\zz$
 by 
declaring that 
$\yopnbv{p}$
is the 
number of 
the factor 
$p$
in the 
prime 
factorization of 
$x$. 
The completion 
$\zz_{p}$
of
 $\zz$ 
with respect 
to 
$\yopnbv{p}$
is called 
the 
\emph{ring of $p$-adic integers}. 
The fractional 
field
$\yopnbf{p}$
 of
  $\zz_{p}$
is called 
the 
\emph{field of 
$p$-adic numbers}. 
 For more discussion on 
$p$-adic
 numbers, 
we refer the readers to 
\cite{MR1703500} 
and 
\cite{MR2444734}.

We say that a function 
$\lVert *\rVert \colon A\to [0, \infty)$
is 
a
\emph{(non-Archimedean) absolute value}
or 
\emph{multiplicative   valuation}
if the following conditions are satisfied:
\begin{enumerate}

\item 
for every 
$x\in A$, 
we have 
$\lVert x \rVert=0$ 
if and only if 
$x=0$; 

\item 
for every pair 
$x, y\in A$, 
we have 
$\lVert xy\rVert =\lVert x\rVert \cdot \lVert y\rVert$; 

\item 
for every pair 
$x, y\in A$, 
we have 
$\lVert x+y\rVert \le  \lVert x\rVert \lor  \lVert y\rVert$, 
where 
$\lor$ 
stands for the 
maximum  operator on 
$\rr$. 
\end{enumerate}

In what follows, for every 
$\yoratio\in (1, \infty)$, 
we consider  that 
$\yoratio^{-\infty}=0$
and 
$-\log_{\yoratio}(0)=\infty$. 
For a valuation 
$v$ 
on a ring 
$A$, 
and for a real  number 
$\yoratio\in (1, \infty)$, 
we define 
$\yonabs{v}{\yoratio}{x}=\yoratio^{-v(x)}$
and 
$\yonval{\lVert *\rVert}{\yoratio}(x)
=-\log_{\yoratio}(x)$.

Fix 
$\yoratio\in (1, \infty)$, 
valuations and absolute values on a ring 
$A$ 
are essentially 
equivalent. 
Namely, 
they 
correspond  to each other 
as follows.
\begin{prop}\label{prop:rho}
Let 
$A$  
be a commutative ring. 
Then the following statements are true: 
\begin{enumerate}[label=\textup{(\arabic*)}]

\item 
For every 
$\yoratio\in (1, \infty)$, 
and 
for every valuation 
$v$ 
on 
$A$, 
the function 
$\yonabs{v}{\yoratio}{*}\colon A\to [0, \infty)$ is 
an absolute value on 
$A$;

\item 
For 
$\yoratio\in (1, \infty)$, 
and for every absolute value 
$\lVert *\rVert$ 
on 
$A$, 
the function 
$\yonval{\lVert *\rVert}{\yoratio}\colon A\to \rr$ 
is a valuation on 
$A$. 
\end{enumerate}
\end{prop}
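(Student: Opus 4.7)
The plan is to verify each of the three defining conditions of an absolute value (respectively, a valuation) directly from the corresponding axioms for a valuation (respectively, an absolute value), using only the fact that $t \mapsto \yoratio^{-t}$ is a strictly decreasing bijection between $\rr \sqcup \{\infty\}$ and $[0, \infty)$ (with the conventions $\yoratio^{-\infty} = 0$ and $-\log_{\yoratio}(0) = \infty$), and that its inverse $t \mapsto -\log_{\yoratio}(t)$ is likewise strictly decreasing. I would state both parts as independent verifications, since no cross-reference between them is needed.

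For statement (1), I would fix $\yoratio \in (1, \infty)$ and a valuation $v$ on $A$, and check the three axioms of an absolute value in order. The first axiom follows because $\yoratio^{-v(x)} = 0$ precisely when $v(x) = \infty$, which in turn happens exactly when $x = 0$. The multiplicativity axiom is an immediate computation from $v(xy) = v(x) + v(y)$ together with $\yoratio^{-(v(x)+v(y))} = \yoratio^{-v(x)} \cdot \yoratio^{-v(y)}$. The strong triangle inequality follows from the key order-reversal identity $\yoratio^{-(a \land b)} = \yoratio^{-a} \lor \yoratio^{-b}$, valid for all $a, b \in \rr \sqcup \{\infty\}$: applying this with $a = v(x)$ and $b = v(y)$ together with $v(x+y) \ge v(x) \land v(y)$ yields the desired inequality.

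For statement (2), I would fix $\yoratio \in (1, \infty)$ and an absolute value $\lVert * \rVert$ on $A$, and verify the three axioms of a valuation in the same order. The first axiom is again immediate from the conventions on $\log_{\yoratio}$; multiplicativity of $\lVert * \rVert$ translates into additivity of $-\log_{\yoratio}(\lVert * \rVert)$ by the logarithm law $\log_{\yoratio}(st) = \log_{\yoratio}(s) + \log_{\yoratio}(t)$, extended in the obvious way to cover the value $0$; and the strong triangle inequality for $\lVert * \rVert$ translates into the ultrametric-type inequality for the additive valuation by the analogous identity $-\log_{\yoratio}(a \lor b) = (-\log_{\yoratio}(a)) \land (-\log_{\yoratio}(b))$.

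There is no substantial obstacle here, so I would not expect to have to invoke any earlier material. The only small care needed is in the bookkeeping of the boundary values $\infty$ and $0$ under the bijection $\yoratio^{-(*)}$; I would dispatch this once at the start by recording the two order-reversing identities $\yoratio^{-(a \land b)} = \yoratio^{-a} \lor \yoratio^{-b}$ and $-\log_{\yoratio}(a \lor b) = (-\log_{\yoratio}(a)) \land (-\log_{\yoratio}(b))$, and then appeal to them in the two strong-triangle-inequality checks. The proof therefore reduces to a direct translation between the multiplicative and additive formulations and can be written in a few lines per direction.
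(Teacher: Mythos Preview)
Your proposal is correct and is the natural direct verification. The paper actually states this proposition without proof, treating it as an elementary fact (the text immediately proceeds to use it as a dictionary between additive and multiplicative notation), so your argument is not a different route but rather the obvious fleshing-out of what the paper leaves implicit.
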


In this paper, 
we use both  valuations and absolute values on a ring
due to 
Proposition 
\ref{prop:rho}. 
Since we represent a valuation  
(resp.~an absolute value) 
as
a symbol like 
$v$, 
or 
$w$ 
(resp.~$\lvert*\rvert$
or  
$\lVert*\rVert$), 
the readers will be able to  distinguish them.

For a valued 
ring 
$(A, v)$, 
the set 
$\yovring{A}{v}=\{\, x\in K\mid 0\le v(x)\, \}$ 
becomes 
a ring, 
 and 
$\yovideal{A}{v}=\{\, x\in K\mid 0< v(x)\, \}$  
is 
a maximal ideal of 
$\yovring{A}{v}$. 
We denote by 
$\yorcf{A}{v}$
the field 
$\yovring{A}{v}/\yovideal{A}{v}$, 
and we call it 
 the 
\emph{residue class field
 of 
$(A, v)$}. 
We also denote by  
$\yorcff_{A}\colon \yovring{A}{v}\to \yorcf{A}{v}$
the canonical projection. 
We simply represent it as 
$\yorcff$ 
when no confusion
can arise. 
We say that a subset 
$\yorep$ 
of 
$K$
is 
a 
\emph{complete system of representatives of the residue class field
$\yorcf{A}{v}$} 
if 
$\yorep\yosub \yovring{A}{v}$, 
$0\in \yorep$, 
and 
$\yorcff|_{\yorep}\colon \yorep\to  \yorcf{A}{v}$ 
is 
bijective. 
Note that our definition requires 
$0\in \yorep$.

\begin{rmk}
Let 
$(A, v)$ 
be a valued ring. 
The following statements are 
 well-known. 
\begin{enumerate}

\item 
If 
$x\in A$
 satisfies 
 $v(x)=0$, 
then 
$x$
 is invertible in 
 $A$. 

\item 
If 
$K$ 
is a fractional ring of 
$A$, 
then 
$\yovring{K}{v}=A$. 
\end{enumerate}
\end{rmk}

\subsection{Constructions of valued  fields}\label{subsec:const}

In this section, 
we review 
some constructions of valued fields such as 
Hahn fields. 
For more discussion, 
see 
\cite{MR3782290} 
and 
\cite{MR4367483}. 
For most of 
the proofs
in this section, 
we refer to 
\cite{MR1225257}
and 
\cite{MR0554237}
(see also 
\cite{MR0886475}).


\subsubsection{Hahn rings and fields}

A non-empty subset 
$S$ 
of 
$\rr$ 
is 
said to be 
\emph{well-ordered} 
if 
every non-empty subset of 
$S$ 
has a minimum.

We denote by 
$\yogset$ 
the 
set of all subgroups
$G$
 of 
$\rr$
with 
$1\in G$
(equivalently, 
$\zz\yosub G\yosub \rr$). 
For the sake of convenience, 
we only consider the setting
where
 $G\in \yogset$
in this paper.

Now we  review 
the construction 
of the Hahn fields  in 
\cite{MR1225257}. 
Let 
$G\in \yogset$
and 
$A$ 
be a commutative ring. 
For a map 
$a\colon G\to A$, 
we define the support  
$\supp(a)$  
of 
$a$
by 
the set 
$\{\, x\in G\mid a(x)\neq 0\, \}$. 
We denote by 
$\yoha{G}{A}$
the set of all 
$a\colon G\to A$ 
such that 
$\supp(a)$ 
is  well-ordered. 
We often symbolically  represent 
$a\in \yoha{G}{A}$
as 
\[
a=\sum_{g\in G}a(g)t^{g}, 
\]
where $t$ is an indeterminate. 
For every pair 
$a, b \in \yoha{G}{A}$, 
we define 
$a+b$ 
by 
\[
(a+b)(x)=a(x)+b(x). 
\]
We also define 
the multiplication 
$ab\colon G\to A$ 
by 
\[
ab=\sum_{g\in G}\left(\sum_{i, j\in G, i+j=g}a_{i}b_{j}\right)t^{g}. 
\]
Define a valuation 
$\yohav{G}{A}$
on 
$\yoha{G}{A}$ 
by 
$\yohav{G}{A}(a)=\min \supp(a)$.
Since 
$\supp(a)$  
is well-ordered, 
the minimum 
$\min\supp(a)$ 
actually exists. 
Note that 
$A$
becomes 
 a subring 
of 
$\yoha{G}{A}$.

\begin{prop}\label{prop:prophahn}
Let 
$G\in \yogset$, 
and 
$\yosmf{k}$ 
be a field. 
Then 
$(\yoha{G}{\yosmf{k}}, \yohav{G}{\yosmf{k}})$ 
becomes 
a valued field and  it 
satisfies 
$\yorcf{\yoha{G}{\yosmf{k}}}{\yohav{G}{\yosmf{k}}}=\yosmf{k}$. 
\end{prop}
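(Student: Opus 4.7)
The plan is to establish the proposition in four stages, following the classical treatment of Hahn series. First I would show that $\yoha{G}{\yosmf{k}}$ is a commutative ring under the stated operations. The only nontrivial point is well-definedness of multiplication: for $a, b \in \yoha{G}{\yosmf{k}}$ the coefficient sum $\sum_{i+j=g} a(i)b(j)$ must be finite for each $g$, and the support $\supp(ab) \yosub \supp(a)+\supp(b)$ must be well-ordered. Both facts follow from the classical Neumann lemma: if $S, T \yosub \rr$ are well-ordered, then the Minkowski sum $S+T$ is well-ordered and every element of $S+T$ has only finitely many representations as $s+t$. I would prove this by contradiction, extracting from any counterexample a strictly decreasing sequence in $S$ or $T$. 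Given the lemma, ring associativity and distributivity reduce to formal manipulations of finite sums.

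Second, I would verify the valuation axioms for $\yohav{G}{\yosmf{k}}$. That $\yohav{G}{\yosmf{k}}(a) = \infty$ iff $a = 0$ holds by the convention $\min \emptyset = \infty$. The ultrametric inequality $\yohav{G}{\yosmf{k}}(a+b) \ge \yohav{G}{\yosmf{k}}(a) \land \yohav{G}{\yosmf{k}}(b)$ is immediate from $\supp(a+b) \yosub \supp(a) \cup \supp(b)$. For multiplicativity, observe that the minimum of $\supp(a)+\supp(b)$ equals $\min \supp(a) + \min \supp(b)$, and that the coefficient of $t^{\yohav{G}{\yosmf{k}}(a)+\yohav{G}{\yosmf{k}}(b)}$ in $ab$ is the single product $a(\yohav{G}{\yosmf{k}}(a)) \cdot b(\yohav{G}{\yosmf{k}}(b))$, which is nonzero because $\yosmf{k}$ is a field.

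Third, and this is the main obstacle, I would show every nonzero element is invertible. Given $a \ne 0$ with $g_{0} = \yohav{G}{\yosmf{k}}(a)$, factor $a = c\, t^{g_{0}}(1-u)$ with $c = a(g_{0}) \in \yosmf{k}^{\times}$ and $u \in \yoha{G}{\yosmf{k}}$ having $\supp(u) \yosub G \cap (0, \infty)$, and set formally $a^{-1} = c^{-1} t^{-g_{0}} \sum_{n \ge 0} u^{n}$. The essential point is to prove this formal geometric series defines an element of $\yoha{G}{\yosmf{k}}$: its support $\bigcup_{n} \supp(u^{n})$ must be well-ordered, and for each $g \in G$ only finitely many $n$ can contribute to the coefficient of $t^{g}$. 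Both claims follow from a strengthening of the Neumann lemma: the submonoid of $G$ generated by any well-ordered subset of $G \cap (0, \infty)$ is itself well-ordered, and for any fixed $g$ there are only finitely many $n$ with $g \in \supp(u^{n})$ (since the minimum of $\supp(u^{n})$ is at least $n \cdot \min \supp(u)$, which tends to $\infty$).

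Finally, for the residue field identification, I would observe that the valuation ring $\yovring{\yoha{G}{\yosmf{k}}}{\yohav{G}{\yosmf{k}}}$ consists exactly of those $a$ with $\supp(a) \yosub [0, \infty)$, and the maximal ideal $\yovideal{\yoha{G}{\yosmf{k}}}{\yohav{G}{\yosmf{k}}}$ of those with $\supp(a) \yosub (0, \infty)$. The map $a \mapsto a(0)$ is then a surjective ring homomorphism $\yovring{\yoha{G}{\yosmf{k}}}{\yohav{G}{\yosmf{k}}} \to \yosmf{k}$ whose kernel is precisely $\yovideal{\yoha{G}{\yosmf{k}}}{\yohav{G}{\yosmf{k}}}$, so the first isomorphism theorem yields $\yorcf{\yoha{G}{\yosmf{k}}}{\yohav{G}{\yosmf{k}}} \cong \yosmf{k}$ as required.
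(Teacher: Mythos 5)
Your proof is correct, but note that the paper does not actually prove this proposition: it is dispatched by a citation to Poonen's Corollary~1 in the reference where the Mal'cev--Neumann construction is carried out. Your proposal supplies the classical argument behind that citation: Neumann's lemma on well-ordered subsets of $\rr$ (finiteness of the representations $g=i+j$ and well-orderedness of $\supp(a)+\supp(b)$) makes the product well defined; the valuation axioms follow because the bottom coefficient of $ab$ is $a(\yohav{G}{\yosmf{k}}(a))\cdot b(\yohav{G}{\yosmf{k}}(b))\neq 0$; inverses come from the geometric series $\sum_{n\ge 0}u^{n}$ with $\supp(u)\yosub G\cap(0,\infty)$; and evaluation $a\mapsto a(0)$ identifies the residue field. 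Two caveats if you flesh this out. First, the well-orderedness of $\bigcup_{n}\supp(u^{n})$ is the genuinely nontrivial half of the invertibility step (the monoid generated by a well-ordered subset of $(0,\infty)$ is well-ordered) and needs its own Neumann-type proof; your archimedean estimate $\min\supp(u^{n})\ge n\cdot\min\supp(u)$ settles only the local-finiteness half, and it uses $G\yosub\rr$ --- harmless here since every $G\in\yogset$ is a subgroup of $\rr$, though the cited result holds for arbitrary linearly ordered abelian groups, as the paper remarks. Second, multiplicativity of $a\mapsto a(0)$ on the valuation ring rests on the observation that $i=j=0$ is the only solution of $i+j=0$ with $i,j\ge 0$; you use this implicitly and it is fine. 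What your route buys is a self-contained proof; what the paper's citation buys is brevity and the greater generality of the group $G$.
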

\begin{proof}
See 
\cite[Corollary 1]{MR1225257}. 
\end{proof}

We call 
$(\yoha{G}{A}, \yohav{G}{A})$ 
the 
\emph{Hahn ring  associated with 
$G$ and $A$}
and call
it 
 the 
 \emph{Hahn field} 
 if 
$A$ 
is a field. 
Note that 
in general, 
we can define the Hahn fields even if 
$G$ 
is a linearly ordered Abelian group
(see \cite{MR1225257}).


\subsubsection{The $p$-adic Hahn fields}
A 
$p$-adic 
analogue of the Hahn fields was
 first 
introduced in
 \cite{MR1225257}. 
Let us review 
a
construction. 
A field 
$\yosmf{k}$ 
of 
 characteristic 
$p$ 
is said to be 
\emph{perfect} 
if 
$p=0$,  
or 
$p>0$ 
and 
every element
of $\yosmf{k}$ 
has  a 
$p$-th 
root
in 
$\yosmf{k}$. 
The following proposition states the 
existence of rings of Witt vectors.
The proof is presented in 
\cite{MR0554237}.

\begin{prop}\label{prop:witt}
Let 
$\yosmf{k}$ 
be a perfect field of characteristic 
$p>0$. 
Then there exists a unique 
valued 
ring 
$(A, v)$ 
of characteristic 
$0$ 
equipped 
with a
valuation 
 $v$ 
 such
 that 
 $v(A)=\zz_{\ge 0}$, 
 $v$ 
 is complete, 
  and 
$\yorcf{A}{v}=\yosmf{k}$. 
\end{prop}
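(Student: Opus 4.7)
The plan is to construct $A$ explicitly as the ring of Witt vectors over $\yosmf{k}$ and then establish uniqueness via Teichm\"uller representatives.

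For existence, I would first introduce the Witt polynomials $W_{n}(X_{0}, \ldots, X_{n}) = \sum_{i=0}^{n} p^{i} X_{i}^{p^{n-i}}$ in $\zz[X_{0}, X_{1}, \ldots]$ and establish the fundamental lemma that there exist polynomials $S_{n}, P_{n} \in \zz[X_{0}, \ldots, X_{n}, Y_{0}, \ldots, Y_{n}]$ satisfying $W_{n}(S_{0}, \ldots, S_{n}) = W_{n}(X_{0}, \ldots, X_{n}) + W_{n}(Y_{0}, \ldots, Y_{n})$ and the analogous identity for multiplication. This is proved by induction on $n$, exploiting the congruence principle that $a \equiv b \pmod{p^{m}}$ in $\zz$ implies $a^{p} \equiv b^{p} \pmod{p^{m+1}}$. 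With these universal polynomials in hand, I would define $A := \yosmf{k}^{\nn}$ with addition and multiplication given coordinate-wise through $S_{n}$ and $P_{n}$; the axioms transfer from $\zz$ via the Witt polynomial identities, so the resulting structure is a commutative ring of characteristic $0$.

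Next I would define $v \colon A \to \zz_{\ge 0} \sqcup \{\infty\}$ by $v((a_{0}, a_{1}, \ldots)) = \min\{\, n \in \nn \mid a_{n} \neq 0 \,\}$ (with $v(0) = \infty$) and verify: (i) $v$ is a valuation with value set $\zz_{\ge 0}$; (ii) the projection onto the zeroth coordinate identifies the residue class field with $\yosmf{k}$; (iii) $(A, v)$ is complete, since Cauchy sequences stabilize coordinate-wise. For uniqueness, suppose $(A', v')$ also satisfies the hypotheses. Perfectness of $\yosmf{k}$ lets me choose, for each $a \in \yosmf{k}$ and each $n \in \nn$, a lift $\tilde{a}_{n} \in A'$ of $a^{p^{-n}} \in \yosmf{k}$; by completeness of $v'$ and the congruence lemma, the sequence $\tilde{a}_{n}^{p^{n}}$ converges to an element $\omega(a) \in A'$ independent of the chosen lifts, yielding a multiplicative section $\omega \colon \yosmf{k} \to A'$ of the residue projection. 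Every element of $A'$ then admits a unique expansion $\sum_{i=0}^{\infty} \omega(a_{i}) p^{i}$ with $a_{i} \in \yosmf{k}$, and the assignment $(a_{0}, a_{1}, \ldots) \mapsto \sum_{i \ge 0} \omega(a_{i}) p^{i}$ is the desired valuation-preserving isomorphism $A \to A'$.

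The main obstacle is the construction of the universal polynomials $S_{n}, P_{n}$ with integer coefficients; this is the technical heart of Witt vector theory and requires careful $p$-adic congruence arguments tracking all the way down to the denominators that appear when inverting Witt polynomials formally over $\qq$. A secondary difficulty is verifying that the Teichm\"uller section $\omega$ is multiplicative and that the expansion map respects addition, but both reduce to precisely the same congruence estimates used in the existence step, so handling them in parallel is the efficient organizational strategy.
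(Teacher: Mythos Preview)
Your proposal is correct and follows the standard construction of the ring of Witt vectors; the paper itself does not give a proof but simply refers to Serre's \emph{Corps locaux} (Proposition 8, p.~35 and the discussion on p.~37), which carries out exactly the program you outline. One small indexing point worth tightening in your final isomorphism: the Witt coordinates $(a_{0}, a_{1}, \ldots)$ correspond to the Teichm\"uller expansion $\sum_{i\ge 0}\omega(a_{i}^{1/p^{i}})\,p^{i}$ rather than $\sum_{i\ge 0}\omega(a_{i})\,p^{i}$, since $p^{i}\cdot[\,c\,]$ has $c^{p^{i}}$ in its $i$-th Witt slot; this does not affect your strategy but will matter when you verify that the bijection is a ring homomorphism.
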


For each perfect field 
$\yosmf{k}$,
we denote  by 
$(\yowitto{\yosmf{k}}, \yowiv{\yosmf{k}})$
the valuation 
ring stated in Proposition 
\ref{prop:witt}
and 
denote by 
$\yowitt{\yosmf{k}}$
the fractional field  of 
$\yowitto{\yosmf{k}}$. 
We use the same symbol 
$\yowiv{\yosmf{k}}$ 
as 
 the  valuation on 
 $\yowitto{\yosmf{k}}$
induced by 
$\yowiv{\yosmf{k}}$ 
in the canonical way. 
The ring 
$(\yowitto{\yosmf{k}}, \yowiv{\yosmf{k}})$
is called the 
\emph{ring of Witt vectors associated with 
$\yosmf{k}$}. 
Notice that 
for every prime 
$p$, 
we have 
$\yowitto{\yogf{p}}=\zz_{p}$
and 
$\yowitt{\yogf{p}}=\yopnbf{p}$, 
and the valuation 
$\yowiv{\yogf{p}}$
coincides with 
the
 $p$-adic 
valuation
$\yopnbv{p}$.

The next proposition explains the 
concrete representation of  elements of 
a ring of Witt vectors.

\begin{prop}\label{prop:multimulti}
Let 
$\yosmf{k}$ 
be a perfect field of characteristic 
$p>0$. 
Then there uniquely 
exists a 
map 
$\yotei_{\yosmf{k}}\colon \yosmf{k}\to \yowitto{\yosmf{k}}$
 such that 
 the set
 \[
 \{\, \yotei_{\yosmf{k}}(a)\mid a\in \yosmf{k}\, \}
 \]
 is a complete system of 
 representatives, and
$\yotei_{\yosmf{k}}(ab)=\yotei_{\yosmf{k}}(a)\yotei_{\yosmf{k}}(b)$
for all 
$a, b\in \yosmf{k}$. 
In this case, 
for every
 $x\in \yowitto{\yosmf{k}}$, 
there uniquely exists a 
sequence 
$\{a_{i}\}_{i\in \zz_{\ge 0}}$
in 
$\yosmf{k}$
such that 
$x=\sum_{n\in \zz_{\ge 0}}\yotei_{\yosmf{k}}(a_{n})p^{n}$. 
\end{prop}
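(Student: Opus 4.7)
The plan is to prove the statement in two logical parts: first, the construction and characterization of $f_{\yosmf{k}}$ as the Teichm\"uller multiplicative section of the residue map, and second, the $p$-adic series expansion via iterative digit extraction. Both rest on the completeness of $\yowiv{\yosmf{k}}$ (given by Proposition \ref{prop:witt}) together with the perfectness of $\yosmf{k}$.

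For the first part, I would set up the Teichm\"uller construction. Given $a \in \yosmf{k}$, perfectness yields a unique $p^{n}$-th root $a^{(n)} \in \yosmf{k}$ for each $n$. Choosing arbitrary lifts $\tilde{a}^{(n)} \in \yowitto{\yosmf{k}}$ of $a^{(n)}$, one defines
\[
f_{\yosmf{k}}(a) := \lim_{n \to \infty} (\tilde{a}^{(n)})^{p^{n}}.
\]
The key technical ingredient is the lifting lemma: if $x, y \in \yowitto{\yosmf{k}}$ satisfy $\yowiv{\yosmf{k}}(x-y) \ge k$, then $\yowiv{\yosmf{k}}(x^{p} - y^{p}) \ge k+1$. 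This follows from the binomial expansion of $(y + p^{k} z)^{p} - y^{p}$ and the fact that $p \mid \binom{p}{i}$ for $0 < i < p$. The lemma implies that the sequence $(\tilde{a}^{(n)})^{p^{n}}$ is Cauchy with respect to $\yowiv{\yosmf{k}}$, so the limit exists by completeness, and the same estimate shows independence from the choice of lifts. Multiplicativity is immediate from $(\tilde{a}^{(n)} \tilde{b}^{(n)})^{p^{n}} = (\tilde{a}^{(n)})^{p^{n}} (\tilde{b}^{(n)})^{p^{n}}$, while reduction modulo the maximal ideal recovers $a$, so the image $\{f_{\yosmf{k}}(a)\}_{a \in \yosmf{k}}$ is a complete system of representatives. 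Uniqueness of $f_{\yosmf{k}}$ is then forced: any multiplicative section $g$ must satisfy $g(a) = g(a^{(n)})^{p^{n}}$, and since $g(a^{(n)})$ is some lift of $a^{(n)}$, the same convergence argument gives $g = f_{\yosmf{k}}$.

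For the second part, I would iteratively extract digits. Setting $a_{0} := \yorcff(x)$ places $x - f_{\yosmf{k}}(a_{0})$ in the maximal ideal, so it equals $p x_{1}$ for some $x_{1} \in \yowitto{\yosmf{k}}$; repeating this step produces all $a_{n}$, and the $N$-th partial sum $\sum_{n=0}^{N} f_{\yosmf{k}}(a_{n}) p^{n}$ agrees with $x$ modulo $p^{N+1}$, so the series converges to $x$ by completeness. For $x \in \yowitto{\yosmf{k}}$ this yields $a_{i} = 0$ for all $i < 0$, meeting the stated bound with $m = 0$; the $\zz$-indexed formulation accommodates the extension to the fractional field $\yowitt{\yosmf{k}}$, where one simply begins at the valuation of $x$. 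Uniqueness of the coefficients is a standard mod-$p$ induction: reducing both sides modulo $p$ pins down the lowest-index coefficient, after which one subtracts, divides by $p$, and iterates. The hardest step is the lifting lemma and its consequences for both convergence and lift-independence. The argument uses only divisibility of binomial coefficients, but because $\yowitto{\yosmf{k}}$ has mixed characteristic one cannot appeal to a Frobenius endomorphism, so all estimates must be carried out directly on representatives; once this technical point is in hand, the remaining uniqueness and digit-extraction steps are essentially bookkeeping.
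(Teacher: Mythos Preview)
Your proposal is correct. The paper itself does not supply a proof but merely cites Serre's \emph{Local Fields} (Proposition~8, p.~35 and the discussion on p.~37); what you have written is precisely the standard Teichm\"uller-lift argument found there, so your approach coincides with the cited source.
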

\begin{proof}
See 
Proposition 8 on   page 35  
and the argument on  page 37 
in 
\cite{MR0554237}. 
\end{proof}

\begin{prop}\label{prop:functorwitt}
Let 
$\yosmf{k}$ 
and 
$\yosmf{l}$ 
be perfect fields of 
characteristic 
$p>0$. 
For every homomorphism 
$\phi\colon \yosmf{k}\to \yosmf{l}$, 
there uniquely exists  a homomorphism 
$\yowitto{\phi}\colon \yowitto{\yosmf{k}}\to \yowitto{\yosmf{l}}$ 
such that 
$\yorcff_{\yowitto{\yosmf{l}}}\circ \yowitto{\phi}=\phi\circ \yorcff_{\yowitto{\yosmf{k}}}$. 
\[
  \begin{CD}
     \yowitto{\yosmf{k}} @>{\yowitto{\phi}}>> \yowitto{\yosmf{l}} \\
  @V{\yorcff_{\yowitto{\yosmf{k}}}}VV    @V{\yorcff_{\yowitto{\yosmf{l}}}}VV \\
     \yosmf{k}   @>{\phi}>>  \yosmf{l}
  \end{CD}
\]
Moreover, 
if 
$x=\sum_{n\in \zz_{\ge 0}}\yotei_{\yosmf{k}}(a_{n})p^{n}\in \yowitto{\yosmf{k}}$, 
where 
$a_{n}\in \yosmf{k}$, 
then 
we have 
\[
\yowitto{\phi}(x)
=\sum_{n\in \zz_{\ge 0}}\yotei_{\yosmf{l}}(\phi(a_{n}))p^{n}. 
\]
In particular, 
we have 
$\yowiv{\yosmf{l}}(\yowitto{\phi}(x))=\yowiv{\yosmf{k}}(x)$
for all 
$x\in \yowitto{\yosmf{k}}$. 
\end{prop}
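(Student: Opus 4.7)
The plan is to define $\yowitto{\phi}$ by the announced formula using the Teichm\"uller representatives supplied by Proposition \ref{prop:multimulti}, check that it is a ring homomorphism, establish uniqueness, and finally record the valuation preservation. For the well-definedness, given $x\in \yowitto{\yosmf{k}}$, Proposition \ref{prop:multimulti} provides a unique sequence $\{a_n\}_{n\in \zz}\subseteq \yosmf{k}$ with $a_n=0$ for $n$ sufficiently negative and $x=\sum_{n\in\zz} f_{\yosmf{k}}(a_n)p^n$. We set
\[
\yowitto{\phi}(x):=\sum_{n\in\zz} f_{\yosmf{l}}(\phi(a_n))p^n,
\]
which converges in the complete ring $\yowitto{\yosmf{l}}$ because its support is bounded below. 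The compatibility $\yorcff_{\yowitto{\yosmf{l}}}\circ \yowitto{\phi}=\phi\circ \yorcff_{\yowitto{\yosmf{k}}}$ is then read off directly from the formula, since both sides extract the $n=0$ Teichm\"uller coefficient and apply $\phi$.

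For the ring homomorphism property, the strategy is to appeal to the fact that, when two elements of $\yowitto{\yosmf{k}}$ are expanded in the Teichm\"uller basis, the Teichm\"uller coefficients of their sum and product are expressible by universal formulas (over $\zz$) in the original coefficients and in their iterated $p$-power roots in $\yosmf{k}$; the existence of these formulas is precisely where perfectness of $\yosmf{k}$ is used. Since $\phi$ is a ring homomorphism and $p^n$-th roots in perfect fields of characteristic $p$ are unique, so that $\phi(a^{1/p^n})=\phi(a)^{1/p^n}$, applying $\phi$ coefficient-wise commutes with these universal formulas. This yields $\yowitto{\phi}(x+y)=\yowitto{\phi}(x)+\yowitto{\phi}(y)$ and $\yowitto{\phi}(xy)=\yowitto{\phi}(x)\yowitto{\phi}(y)$.

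For uniqueness, suppose $\psi\colon \yowitto{\yosmf{k}}\to \yowitto{\yosmf{l}}$ is another ring homomorphism satisfying $\yorcff_{\yowitto{\yosmf{l}}}\circ \psi=\phi\circ \yorcff_{\yowitto{\yosmf{k}}}$. The Teichm\"uller representative $f_{\yosmf{k}}(a)$ is characterized as the unique lift of $a\in \yosmf{k}$ that admits a $p^n$-th root in $\yowitto{\yosmf{k}}$ for every $n\ge 0$, namely $f_{\yosmf{k}}(a^{1/p^n})$. Consequently $\psi(f_{\yosmf{k}}(a))$ is a lift of $\phi(a)$ admitting $p^n$-th roots for every $n$, hence equals $f_{\yosmf{l}}(\phi(a))$ by the same characterization. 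Since $\psi$ is automatically continuous for the $p$-adic topology (both rings are complete with uniformizer $p$), it is determined by its values on the Teichm\"uller lifts through the series expansion, so $\psi=\yowitto{\phi}$.

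Finally, valuation preservation is immediate: setting $m=\yowiv{\yosmf{k}}(x)$ forces $a_n=0$ for $n<m$ and $a_m\neq 0$, and since $\phi$ is a field homomorphism and hence injective, $\phi(a_m)\neq 0$, giving $\yowiv{\yosmf{l}}(\yowitto{\phi}(x))=m$. The main obstacle is the ring homomorphism verification in the second step: expanding sums and products in the Teichm\"uller basis introduces the carries governed by the Witt addition and multiplication polynomials, and confirming their $\phi$-equivariance cleanly — without reconstructing the full Witt-vector arithmetic — is where the most care is required.
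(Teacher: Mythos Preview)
Your proposal is correct and follows the standard argument for the functoriality of Witt vectors. Note, however, that the paper does not supply its own proof of this proposition: it simply cites \cite[Proposition 10 in the page 39]{MR0554237} (Serre's \emph{Local Fields}). Your sketch is essentially the argument one finds there, so there is no substantive divergence to discuss --- you have unpacked what the paper outsources to the literature.
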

\begin{proof}
See
\cite[Proposition 10 on   page 39]{MR0554237}. 
\end{proof}

\begin{rmk}
It is known that 
the construction of 
rings of Witt vectors 
is 
a functor
(see \cite[Page 39]{MR0554237}). 
\end{rmk}

Now we discuss
a 
$p$-adic 
analogue of Hahn fields, 
which is 
defined as
a quotient field of 
a Hahn ring. 
For 
$G\in \yogset$, 
and 
for a 
perfect 
field 
$\yosmf{k}$
 of  characteristic 
 $p>0$, 
we define 
a subset 
$\yopideal{G}{\yosmf{k}}$ 
of 
$\yoha{G}{\yowitto{\yosmf{k}}}$
by 
the 
set of all 
$\alpha=\sum_{g\in G}\alpha_{g}t^{g}\in 
\yoha{G}{\yowitto{\yosmf{k}}}$ 
such that 
$\sum_{n\in \zz}\alpha_{g+n}p^{n}=0$ 
in 
$\yowitto{\yosmf{k}}$ 
for every 
$g\in G$.

\begin{prop}\label{prop:p-adic}
For every 
$G\in \yogset$, 
and 
for every  
perfect field 
$\yosmf{k}$ 
of characteristic 
$p>0$,
the set 
$\yopideal{G}{\yosmf{k}}$ 
is 
an ideal of the ring 
$\yoha{G}{\yowitto{\yosmf{k}}}$, 
and 
$\yoha{G}{\yowitto{\yosmf{k}}}/\yopideal{G}{\yosmf{k}}$ is a 
field
(i.e., $\yopideal{G}{\yosmf{k}}$  is maximal).
\end{prop}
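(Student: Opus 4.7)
The plan divides into two parts: verifying that $\yopideal{G}{\yosmf{k}}$ is an ideal, and then showing that the quotient is a field.

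First, closure of $\yopideal{G}{\yosmf{k}}$ under addition is immediate because the defining equations $\sum_{n \in \zz} \alpha_{g+n} p^n = 0$ are linear in $\alpha$. For closure under multiplication by an arbitrary $\gamma \in \yoha{G}{\yowitto{\yosmf{k}}}$, I would fix $g \in G$, substitute the Hahn convolution $(\gamma \alpha)_{g+n} = \sum_{i+j=g+n} \gamma_i \alpha_j$, and reindex by setting $m = i + j - g \in \zz$ for each fixed $i \in \supp(\gamma)$, obtaining
\[
\sum_{n \in \zz} (\gamma \alpha)_{g+n} p^n = \sum_{i \in \supp(\gamma)} \gamma_i \sum_{m \in \zz} \alpha_{(g-i)+m} p^m.
\]
The rearrangement is justified by Neumann's lemma (the set $\supp(\gamma) + \supp(\alpha)$ is well-ordered and every element admits only finitely many representations as $i+j$ with $i \in \supp(\gamma)$, $j \in \supp(\alpha)$), which forces absolute convergence in the $p$-adic topology of $\yowitt{\yosmf{k}}$. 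Since $g - i \in G$ and $\alpha \in \yopideal{G}{\yosmf{k}}$, each inner sum vanishes, so $\gamma \alpha \in \yopideal{G}{\yosmf{k}}$.

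For the field property, I would show that every class in the quotient has a unique representative of canonical form $\sigma = \sum_{g \in G} f_{\yosmf{k}}(a_g) t^g$ with $a_g \in \yosmf{k}$ and well-ordered support. Uniqueness is immediate from Proposition \ref{prop:multimulti}: if such a $\sigma$ lies in $\yopideal{G}{\yosmf{k}}$, then for each $g$ the sum $\sum_n f_{\yosmf{k}}(a_{g+n}) p^n$ vanishes in $\yowitto{\yosmf{k}}$, and the uniqueness clause of the Witt expansion forces $a_h = 0$ for every $h$. For existence, a direct computation shows $p - t \in \yopideal{G}{\yosmf{k}}$, so $p \equiv t$ modulo the ideal. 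Inserting the Witt representation $\alpha_g = \sum_n f_{\yosmf{k}}(a_{g,n}) p^n$ and replacing $p$ by $t$ formally rewrites $\alpha$ as $\sum_{g,n} f_{\yosmf{k}}(a_{g,n}) t^{g+n}$; a transfinite induction along the well-ordered support of $\alpha$, combined with Witt addition in $\yosmf{k}$, reorganises this into a canonical series. Once the bijection is established, a nonzero canonical series with minimal exponent $g_0$ factors as $f_{\yosmf{k}}(a_{g_0}) t^{g_0}(1 + \varepsilon)$ where $\varepsilon$ has strictly positive least exponent, and the formal geometric series $\sum_{k \ge 0} (-\varepsilon)^k$, which is well-defined as a Hahn-type series by Neumann's lemma, produces an inverse. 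Hence $\yopideal{G}{\yosmf{k}}$ is maximal.

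The principal obstacle is the existence half of the canonical form. For each $g \in \supp(\alpha)$ the set $\{g + n : a_{g,n} \neq 0\}$ is bounded below in $\rr$ (by Proposition \ref{prop:multimulti}), but its union over $g \in \supp(\alpha)$ need not be well-ordered a priori, so the rewriting must be organised by a transfinite induction along the well-order of $\supp(\alpha)$, accumulating coefficients of each power of $t$ only after a controlled truncation by $p$-adic valuation. This is essentially the normal-form construction of Poonen \cite{MR1225257}, and invoking that construction directly is an equally valid route to the proposition.
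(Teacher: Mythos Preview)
Your proposal is correct and aligns with the paper's own treatment: the paper does not give an independent argument but simply refers to \cite[Proposition~3 and Corollary~3]{MR1225257}, and what you have sketched is precisely the outline of Poonen's proof there (ideal verification via the convolution rearrangement, canonical Teichm\"uller representatives, and inversion of $1+\varepsilon$ by a Neumann--type geometric series). Since you yourself note that invoking Poonen's normal-form construction directly is an equally valid route, your proposal and the paper's proof coincide in substance.
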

\begin{proof}
See 
\cite[Proposition 3 ]{MR1225257}
and 
\cite[Corollary 3]{MR1225257}. 
\end{proof}

For subsets $A$, 
$B$ of 
$\rr$, 
we define 
$A+B=\{\, a+b\mid a\in A, b\in B\, \}$. 

\begin{lem}\label{lem:repbeta}
Let 
$G\in \yogset$, 
$p$ 
be a prime, 
and 
$\yosmf{k}$
be a field 
of characteristic 
$p$. 
Let 
$\yorep\yosub \yowitto{\yosmf{k}}$ 
be a 
complete system 
 of representatives of the residue class field of 
$\yosmf{k}$. 
Then
 every element 
$\alpha=\sum_{g\in G}\alpha_{g}t^{g}\in 
\yoha{G}{\yowitto{\yosmf{k}}}$ 
is equivalent to 
an element 
$\beta=\sum_{g\in G}\beta_{g}t^{g}$
modulo 
$\yopideal{G}{\yosmf{k}}$, 
where 
$\beta_{g}$
 is in 
$\yorep$. 
In addition,
for every
 $x\in \yoha{G}{\yowitto{\yosmf{k}}}$, 
 the family
$\{\beta_{g}\}_{g\in G}$ 
is unique and 
$\supp(\beta)\yosub \supp(\alpha)+\zz_{\ge 0}$. 
\end{lem}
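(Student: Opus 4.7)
The plan is to exploit the congruence $p\cdot t^{0} \equiv t^{1} \pmod{\yopideal{G}{\yosmf{k}}}$, which is forced by the very definition of the ideal, and to reduce the lemma coset-by-coset of $G/\zz$ to the uniqueness of $p$-adic digit expansions in the complete discrete valuation ring $\yowitto{\yosmf{k}}$. A direct inspection of the defining equation $\sum_{n \in \zz} \alpha_{g+n} p^{n} = 0$ for membership in $\yopideal{G}{\yosmf{k}}$ shows that it only couples coefficients whose indices lie in the same coset of $G$ modulo $\zz$; hence my first step is to partition $G$ into such cosets and build $\{\beta_g\}$ one coset at a time.

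Fix a coset $C \yosub G$. If $C \cap \supp(\alpha) = \emptyset$, I set $\beta_g := 0$ for $g \in C$. Otherwise let $m_C := \min(C \cap \supp(\alpha))$, which exists by well-orderedness of $\supp(\alpha)$. The $p$-adic series $A_C := \sum_{n \ge 0} \alpha_{m_C + n} p^{n}$ converges in $\yowitto{\yosmf{k}}$, and since $\yorep$ is a complete system of representatives and $\yowitto{\yosmf{k}}$ is $p$-adically complete, $A_C$ admits a unique digit expansion $A_C = \sum_{n \ge 0} r_{n}^{(C)} p^{n}$ with $r_{n}^{(C)} \in \yorep$. I then declare $\beta_{m_C + n} := r_{n}^{(C)}$ for $n \ge 0$ and $\beta_g := 0$ for the remaining $g \in C$. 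Assembling over cosets yields $\beta = \sum_{g} \beta_g t^{g}$ with $\supp(\beta) \yosub \supp(\alpha) + \zz_{\ge 0}$; the latter is well-ordered because the Minkowski sum of two well-ordered subsets of $\rr$ is well-ordered (an alleged infinite strictly decreasing sequence in it would force either one inside $\supp(\alpha)$ or an unbounded sequence of $\zz_{\ge 0}$-shifts driving the $\supp(\alpha)$-parts below $\min \supp(\alpha)$), so $\beta \in \yoha{G}{\yowitto{\yosmf{k}}}$.

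The congruence $\alpha \equiv \beta \pmod{\yopideal{G}{\yosmf{k}}}$ is then a routine shift: writing any $g \in C$ as $g = m_C + k$ with $k \in \zz$, both $\sum_{n} \alpha_{g+n} p^{n}$ and $\sum_{n} \beta_{g+n} p^{n}$ reindex to $p^{-k} A_C$ in $\yowitt{\yosmf{k}}$, so their difference is zero. The step I expect to require the most care is uniqueness. Given another $\beta'$ with coefficients in $\yorep$ and $\alpha \equiv \beta' \pmod{\yopideal{G}{\yosmf{k}}}$, the difference $\beta - \beta'$ lies in $\yopideal{G}{\yosmf{k}}$; within each coset this forces two $\yorep$-valued Laurent-type expansions to represent the same element of $\yowitt{\yosmf{k}}$. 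The hard part is that $\supp(\beta') \cap C$ need not start at $m_C$, so before invoking uniqueness of $p$-adic digits in $\yowitto{\yosmf{k}}$ I must first multiply by a common power of $p$ to align the starting indices; once the two series live in $\yowitto{\yosmf{k}}$, the standard uniqueness of $\yorep$-digit expansions in a complete discrete valuation ring finishes the argument.
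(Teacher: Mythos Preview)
The paper does not supply its own argument for this lemma; it simply cites \cite[Proposition 4]{MR1225257}. Your coset-by-coset reduction to the uniqueness of $\yorep$-digit expansions in the complete discrete valuation ring $\yowitto{\yosmf{k}}$ is correct and is precisely the standard route taken in the cited source, so there is nothing substantive to compare.

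Two minor points worth tightening if you write this up: your sketch that $\supp(\alpha)+\zz_{\ge 0}$ is well-ordered is on the right track but could be stated more cleanly (pass to a subsequence on which the $\zz_{\ge 0}$-component is either constant or tends to $\infty$); and in the uniqueness step, once you pick $m := \min\bigl((\supp(\beta)\cup\supp(\beta'))\cap C\bigr)$ and evaluate the ideal condition at $g=m$, both sides already lie in $\yowitto{\yosmf{k}}$ with no negative powers of $p$, so no further alignment is needed before invoking digit uniqueness.
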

\begin{proof}
See 
\cite[Proposition 4]{MR1225257}. 
\end{proof}

\begin{df}\label{df:bbbb}
Based on Lemma 
\ref{lem:repbeta}, 
for a fixed  complete system 
$\yorep$ 
of 
representatives, 
and 
for every 
$a\in \yoha{G}{\yowitto{\yosmf{k}}}$, 
we denote by 
$\yostan{G}{\yosmf{k}}{\yorep}(a)$
 the 
standard representation of 
$a$ 
with respect to 
$\yorep$
stated in
 Lemma
  \ref{lem:repbeta}. 
In this case, 
we have 
$\supp(\yostan{G}{\yosmf{k}}{\yorep}(a))\yosub \supp(a)+\zz_{\ge 0}$. 
\end{df}

Let 
$\yopf{G}{\yosmf{k}}{p}$
denote 
 the 
 quotient 
field 
$\yoha{G}{\yowitto{\yosmf{k}}}/\yopideal{G}{\yosmf{k}}$, 
and 
let 
$\yoproj\colon \yoha{G}{\yowitto{\yosmf{k}}}\to 
\yopf{G}{\yosmf{k}}{p}$
denote the 
canonical 
projection.

\begin{prop}\label{prop:indS}
Let 
$G\in \yogset$, 
and 
$\yosmf{k}$ 
be a perfect field
of characteristic 
$p>0$. 
Take a complete system 
$\yorep\yosub \yowitto{\yosmf{k}}$
of representatives of 
the residue class field
$\yosmf{k}$. 
Then the following 
statements are true:
\begin{enumerate}[label=\textup{(\arabic*)}]

\item 
For every 
$z\in \yoha{G}{\yowitto{\yosmf{k}}}$, 
 the value 
$\min \supp(\yostan{G}{\yosmf{k}}{\yorep}(z))$ 
is 
 independent of 
the choice of 
$\yorep$. 

\item 

We define 
the map 
$\yopfv{G}{\yosmf{k}}{p}\colon\yopf{G}{\yosmf{k}}{p}\to G$
by 
\[
\yopfv{G}{\yosmf{k}}{p}(x)=\min \supp(\yostan{G}{\yosmf{k}}{\yorep}(z)), 
\]
where 
the point 
$z$ 
belongs to 
$\yoha{G}{\yowitto{\yosmf{k}}}$
and satisfies 
$\yoproj(z)=x$. 
Then 
$\yopfv{G}{\yosmf{k}}{p}$
is 
a valuation on 
$\yopf{G}{\yosmf{k}}{p}$. 
\end{enumerate}
\end{prop}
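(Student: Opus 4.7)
The plan is to derive both statements from a single key observation: if $\gamma \in \yopideal{G}{\yosmf{k}}$ and $g_{0} \in G$ are such that $\gamma_{h} = 0$ for every $h < g_{0}$, then the coefficient $\gamma_{g_{0}}$ lies in $p \yowitto{\yosmf{k}}$. This is immediate from the defining condition of the ideal applied at the index $g_{0}$: the relation $\sum_{n \in \zz} \gamma_{g_{0}+n} p^{n} = 0$ reduces to $\gamma_{g_{0}} + p \sum_{n \ge 1} \gamma_{g_{0}+n} p^{n-1} = 0$ once the vanishing negative-index terms are discarded. Combined with the fact that every nonzero element of a complete system of representatives $\yorep$ is a unit of $\yowitto{\yosmf{k}}$ (since $0 \in \yorep$ already represents $0 \in \yosmf{k}$), this observation will handle both parts.

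For (1), let $\yorep$ and $\yorep'$ be two complete systems of representatives and set $\beta = \yostan{G}{\yosmf{k}}{\yorep}(x)$ and $\beta' = \yostan{G}{\yosmf{k}}{\yorep'}(x)$. By Lemma \ref{lem:repbeta} both $\beta$ and $\beta'$ are equivalent to $x$ modulo $\yopideal{G}{\yosmf{k}}$, so $\beta - \beta'$ lies in the ideal. If $g_{0} = \min \supp \beta$ were strictly less than $g_{0}' = \min \supp \beta'$, then $\beta - \beta'$ would vanish on every index below $g_{0}$ while carrying the nonzero coefficient $\beta_{g_{0}} \in \yorep \setminus \{0\}$ at $g_{0}$. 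The key observation would then force $\beta_{g_{0}} \in p \yowitto{\yosmf{k}}$, contradicting that it is a unit. By symmetry $g_{0} = g_{0}'$, establishing independence from $\yorep$.

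For (2), I would verify the three valuation axioms in turn. Non-degeneracy is immediate from the uniqueness part of Lemma \ref{lem:repbeta}, since $0$ is the unique standard representative of the zero class in $\yopf{G}{\yosmf{k}}{p}$. The strong triangle inequality reduces to Lemma \ref{lem:repbeta} as well: writing $\beta = \yostan{G}{\yosmf{k}}{\yorep}(x)$ and $\beta' = \yostan{G}{\yosmf{k}}{\yorep}(y)$, the sum $\beta + \beta'$ is a lift of $x + y$, and the inclusion $\supp(\yostan{G}{\yosmf{k}}{\yorep}(\beta + \beta')) \yosub \supp(\beta + \beta') + \zz_{\ge 0}$ yields $\yopfv{G}{\yosmf{k}}{p}(x+y) \ge \yopfv{G}{\yosmf{k}}{p}(x) \land \yopfv{G}{\yosmf{k}}{p}(y)$ at once.

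The main obstacle is the multiplicativity axiom, for which both inequalities require a genuine argument. Write $g_{0} = \yopfv{G}{\yosmf{k}}{p}(x)$ and $g_{0}' = \yopfv{G}{\yosmf{k}}{p}(y)$. Since $g_{0}$ and $g_{0}'$ are the unique minima of the well-ordered supports, the coefficient of $t^{g_{0}+g_{0}'}$ in the product $\beta \beta'$ is precisely $\beta_{g_{0}} \beta'_{g_{0}'}$, a product of units in $\yowitto{\yosmf{k}}$ and therefore itself a unit; Lemma \ref{lem:repbeta} then gives $\yopfv{G}{\yosmf{k}}{p}(xy) \ge g_{0} + g_{0}'$. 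For the reverse inequality, suppose for contradiction that $\yostan{G}{\yosmf{k}}{\yorep}(\beta \beta')$ had minimum support strictly greater than $g_{0} + g_{0}'$. Then $\beta \beta' - \yostan{G}{\yosmf{k}}{\yorep}(\beta \beta')$, which lies in $\yopideal{G}{\yosmf{k}}$ by Lemma \ref{lem:repbeta}, would vanish below $g_{0} + g_{0}'$ yet carry the coefficient $\beta_{g_{0}} \beta'_{g_{0}'}$ at $g_{0} + g_{0}'$, and the key observation would force this unit to be divisible by $p$, an impossibility. This yields equality and completes the proof.
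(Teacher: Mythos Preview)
Your proof is correct. The key observation about elements of $\yopideal{G}{\yosmf{k}}$ whose support starts at $g_{0}$ having $g_{0}$-coefficient divisible by $p$ is exactly the right tool, and you deploy it cleanly for both the well-definedness statement and the multiplicativity axiom. One small stylistic point: in the multiplicativity paragraph you invoke the fact that $\beta_{g_{0}}\beta'_{g_{0}'}$ is a unit just before concluding the inequality $\yopfv{G}{\yosmf{k}}{p}(xy)\ge g_{0}+g_{0}'$, but that inequality needs only $\supp(\beta\beta')\yosub[g_{0}+g_{0}',\infty)$; the unit property is what drives the reverse inequality via your key observation, as you in fact argue in the next sentence.

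As for comparison with the paper: the paper does not prove this proposition at all but simply cites Poonen~\cite[Proposition~5]{MR1225257}. Your argument is therefore not an alternative route so much as a self-contained proof where the paper offers only a pointer. Poonen's original argument proceeds along essentially the same lines as yours (reducing to the observation that the leading coefficient of an element of the ideal, in any lift with well-ordered support bounded below, must lie in $p\yowitto{\yosmf{k}}$), so your approach is in the same spirit as the cited source.
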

\begin{proof}
See \cite[Proposition 5]{MR1225257}. 
\end{proof}

Based on 
Proposition 
\ref{prop:indS}, 
we call 
the valued field 
$(\yopf{G}{\yosmf{k}}{p}, \yopfv{G}{\yosmf{k}}{p})$ 
the  
\emph{$p$-adic 
Mal'cev--Neumann field}
or 
\emph{$p$-adic Hahn field}. 
Notice that 
$(\yopf{\zz}{\yogf{p}}{p}, \yopfv{\zz}{\yogf{p}}{p})$ 
is nothing but 
the 
field
$(\yopnbf{p}, \yopnbv{p})$
 of 
$p$-adic 
numbers. 

To consider 
characteristics 
of a valued field and 
its residue class field, 
we supplementally 
define 
$\yoqpset$
by the 
 set of all  pairs
$(q, p)$ 
such that
$q$ 
and 
$p$ 
are 
$0$ 
or a prime 
satisfying either of the following conditions: 
\begin{enumerate}[label=\textup{(Q\arabic*)}]

\item\label{item:condq1} 
$q=p$,

\item\label{item:condq2} 
$q=0$ 
and 
$0<p$. 
\end{enumerate}
Note that 
$(q, p)\in \yoqpset$
 satisfies 
\ref{item:condq2}  if and only if 
$q\neq p$. 

In order to discuss 
$p$-adic and 
ordinary 
Hahn fields in 
a
unified manner, 
we make a
notation 
 as 
follows.

\begin{df}\label{df:uk}
Let 
$G\in \yogset$, 
$(q, p)\in \yoqpset$, 
and 
let 
$\yosmf{k}$
 be  a 
 perfect field 
 of characteristic 
 $p$. 
We define a
field 
$\youhaq{G}{\yosmf{k}}{q}{p}$
 by
\[
\youhaq{G}{\yosmf{k}}{q}{p}
=\begin{cases}
\yoha{G}{\yosmf{k}} & \text{if $q=p$;}\\
\yopf{G}{\yosmf{k}}{p} & \text{if $q\neq p$.}
\end{cases}
\]
We also define a valuation
$\youhavq{G}{\yosmf{k}}{q}{p}$ 
on 
$\youhaq{G}{\yosmf{k}}{q}{p}$
 by 
\[
\youhavq{G}{\yosmf{k}}{q}{p}
=\begin{cases}
\yohav{G}{\yosmf{k}} & \text{if $q=p$;}\\
\yopfv{G}{\yosmf{k}}{p} & \text{if $q\neq p$.}
\end{cases}
\]
\end{df}

A metric space
$(X, d)$
 is 
said to be 
\emph{spherically complete}
if 
for every 
 sequence of 
(closed or open) balls 
$\{B_{i}\}_{i\in \zz_{\ge 0}}$
with 
$B_{i+1}\yosub B_{i}$
 for all 
 $i\in\zz_{\ge 0}$, 
we have 
$\bigcap_{i\in \zz_{\ge 0}}B_{i}\neq \emptyset$.

\begin{prop}\label{prop:propppp}
Let 
$G\in \yogset$, 
$(q, p)\in \yoqpset$, 
 and 
$\yosmf{k}$ 
be a perfect field of characteristic 
$p$. 
Then the following statements are true: 
\begin{enumerate}[label=\textup{(\arabic*)}]

\item\label{item:gg1}
$\youhavq{G}{\yosmf{k}}{q}{p}(\youhaq{G}{\yosmf{k}}{q}{p})=G$;

\item\label{item:kk1}
$\yorcf{\youhaq{G}{\yosmf{k}}{q}{p}}{\youhavq{G}{\yosmf{k}}{q}{p}}=\yosmf{k}$;

\item\label{item:o1}
$(\youhaq{G}{\yosmf{k}}{q}{p}, \youhavq{G}{\yosmf{k}}{q}{p})$ 
is 
spherically complete. 
In particular, it is complete
as a metric space.  

\end{enumerate}
\end{prop}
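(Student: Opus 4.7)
The plan is to split into the two cases $q=p$ (ordinary Hahn field $\yoha{G}{\yosmf{k}}$) and $q\neq p$ ($p$-adic Hahn field $\yopf{G}{\yosmf{k}}{p}$), reducing statements about the $p$-adic quotient to support-level claims in $\yoha{G}{\yowitto{\yosmf{k}}}$ via the standard representation $\yostan{G}{\yosmf{k}}{\yorep}$ of Lemma \ref{lem:repbeta}. For concreteness I would fix $\yorep=\{\, f_{\yosmf{k}}(c)\mid c\in\yosmf{k}\, \}$ supplied by Proposition \ref{prop:multimulti}.

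For \ref{item:gg1}, I would exhibit an explicit preimage for each $g\in G$. The monomial $t^{g}$ has support $\{g\}$ and is already in standard form relative to $\yorep$; in the case $q=p$ one has $\yohav{G}{\yosmf{k}}(t^{g})=g$ by definition, and in the $p$-adic case the first clause of Proposition \ref{prop:indS} yields the same value for $\yoproj(t^{g})$. Since every support is contained in $G$, no valuation outside $G$ can be attained.

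For \ref{item:kk1}, the case $q=p$ is Proposition \ref{prop:prophahn}. For $q\neq p$, I would build a surjective ring homomorphism from the valuation ring of $\yopf{G}{\yosmf{k}}{p}$ to $\yosmf{k}$ by sending $a$ with $\yostan{G}{\yosmf{k}}{\yorep}(a)=\sum_{g\ge 0}\beta_{g}t^{g}$ to $\yorcff_{\yowitto{\yosmf{k}}}(\beta_{0})$; the uniqueness clause of Lemma \ref{lem:repbeta} ensures well-definedness, compatibility with the ring operations follows from the definitions in $\yoha{G}{\yowitto{\yosmf{k}}}$, surjectivity is witnessed by $c\mapsto \yoproj(f_{\yosmf{k}}(c))$, and the kernel coincides with the maximal ideal $\{\, a\mid \yopfv{G}{\yosmf{k}}{p}(a)>0\, \}$.

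For \ref{item:o1}, which I expect to be the main obstacle, the approach is a coefficient-stabilization argument. Given a descending family $\{B_{i}\}$ of balls, by Lemma \ref{lem:ultraopcl} I can choose centers $a_{i}\in B_{i}\yosub B_{i-1}$ so that $\youhavq{G}{\yosmf{k}}{q}{p}(a_{i}-a_{j})\ge \gamma_{i}$ for all $j\ge i$, where $\gamma_{i}\in G$ is determined by the radius of $B_{i}$. If the sequence $\gamma_{i}$ eventually stabilizes at some $\gamma_{N}$, then $a_{N}\in\bigcap_{i}B_{i}$; otherwise $\{\gamma_{i}\}$ is strictly cofinal, and for each $g\in G$ the coefficient of $t^{g}$ in $\yostan{G}{\yosmf{k}}{\yorep}(a_{i})$ becomes independent of $i$ once $\gamma_{i}>g$. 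Call this stable value $\beta_{g}$ and set $\alpha=\sum_{g}\beta_{g}t^{g}$. The key technical step is to verify that $\supp(\alpha)$ is well-ordered: any strictly decreasing sequence in $\supp(\alpha)$ is bounded above by some $\gamma_{i}$ by cofinality, hence lies inside $\supp(\yostan{G}{\yosmf{k}}{\yorep}(a_{i}))$, contradicting the well-ordering there. Then $\alpha\in\yoha{G}{\yowitto{\yosmf{k}}}$, and setting $a:=\alpha$ when $q=p$ and $a:=\yoproj(\alpha)$ when $q\neq p$ yields $\youhavq{G}{\yosmf{k}}{q}{p}(a-a_{i})\ge \gamma_{i}$, i.e., $a\in\bigcap_{i}B_{i}$. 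The subtlety in the $p$-adic case is that the standard form does not respect addition, so the stabilization of the $g$-coefficient must be read off from the standard form of the difference $a_{i}-a_{j}$ rather than from naive subtraction of coefficients; this is exactly what the first clause of Proposition \ref{prop:indS} makes possible.
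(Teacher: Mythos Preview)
The paper's own proof is two sentences: \ref{item:gg1} and \ref{item:kk1} ``follow from the construction,'' and \ref{item:o1} is deferred to \cite[Theorem~1]{MR1225257} and \cite[Theorem~4]{MR0006161}. Your proposal is therefore far more detailed than the paper, and in fact reconstructs what is inside those cited references. Your arguments for \ref{item:gg1} and \ref{item:kk1} are correct; the ring-homomorphism check in the $p$-adic case of \ref{item:kk1} needs the same observation flagged below, but you have the right map and the right kernel.

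For \ref{item:o1}, the coefficient-stabilization strategy is exactly Poonen's, but your handling of the $p$-adic subtlety has a gap. You need the lemma: if $\yopfv{G}{\yosmf{k}}{p}(a_i-a_j)>g$, then $\yostan{G}{\yosmf{k}}{\yorep}(a_i)$ and $\yostan{G}{\yosmf{k}}{\yorep}(a_j)$ agree at every index $\le g$. Proposition~\ref{prop:indS}\ref{item:gg1} (independence of $\min\supp\yostan{G}{\yosmf{k}}{\yorep}(x)$ from $\yorep$) does not say this; it is a consequence of the needed lemma, not a source for it. What actually does the work is the observation that for any $x\in\yoha{G}{\yowitto{\yosmf{k}}}$ with $\supp(x)\subseteq[h_0,\infty)$, the residue $\yorcff(x(h_0))\in\yosmf{k}$ depends only on the class of $x$ modulo $\yopideal{G}{\yosmf{k}}$ (immediate from the defining relation $\sum_{n\ge 0} x(h_0+n)p^n=0$ read modulo $p$). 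Applying this to $\delta=\yostan{G}{\yosmf{k}}{\yorep}(a_i)-\yostan{G}{\yosmf{k}}{\yorep}(a_j)$, whose leading coefficient is a difference of two distinct elements of $\yorep$ and hence has nonzero residue, forces $\min\supp\yostan{G}{\yosmf{k}}{\yorep}(\delta)$ to equal that leading index, contradicting $\yopfv{G}{\yosmf{k}}{p}(a_i-a_j)>g$. This one-line invariant is the real content of the $p$-adic step; once you insert it, your argument goes through and matches Poonen's.
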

\begin{proof}
The statements 
\ref{item:gg1} and 
\ref{item:kk1}
follow
from the 
construction of 
$(\youhaq{G}{\yosmf{k}}{q}{p}, 
\youhavq{G}{\yosmf{k}}{q}{p})$. 
The statement 
\ref{item:o1} 
is proven by 
\cite[Theorem 1]{MR1225257}, 
and 
\cite[Theorem 4]{MR0006161}
(see also \cite[Theorem 6.11]{MR3782290}). 
\end{proof}

Next we consider 
homeomorphic embeddings 
between 
$p$-adic 
or ordinary 
Hahn fields. 
We begin with 
ordinary ones. 
Let 
$G, H\in \yogset$ 
 with 
$G\yosub H$, 
and 
$A, B$ 
be commutative rings. 
We represent 
$\iota$ 
as the inclusion map 
$G\to H$. 
Let 
$\phi\colon A\to B$ 
be a 
ring 
homomorphism. 
For 
$x=\sum_{g\in G}x_{g}t^{g}\in \yoha{G}{A}$, 
we define 
$\yoha{\iota}{\phi}(x)\in  \yoha{H}{B}$ 
by 
$\yoha{\iota}{\phi}(x)=\sum_{h\in H}y_{h}t^{h}$, 
where 
\[
y_{h}
=
\begin{cases}
\phi(x_{h})  & \text{if $h\in G$;}\\
0 & \text{if $h\not \in G$.}
\end{cases}
\]
Then 
$\yoha{\iota}{\phi}\colon 
\yoha{G}{A}
\to \yoha{H}{B}$ 
becomes a map. 
If 
$G=H$, 
we simply 
write it as 
$\yoha{G}{\phi}$. 
Let us 
observe properties of 
$\yoha{\iota}{\phi}$. 

\begin{prop}\label{prop:hahnfuntor}
Let 
$A, B$ 
be commutative rings, 
and 
$\phi\colon A\to B$ 
be a 
ring homomorphism. 
Let 
$G, H\in \yogset$ 
such that 
$G\yosub H$, 
and denote by 
$\iota$ 
the inclusion map 
$G\to H$. 
Then the map 
$\yoha{\iota}{\phi}\colon \yoha{G}{A}\to \yoha{H}{B}$
 is 
a ring homomorphism
and satisfies
\[
\yorcff_{\yoha{H}{B}}\circ \yoha{\iota}{\phi}=
 \phi\circ \yorcff_{\yoha{G}{A}}
 \]
 on 
 $\yovring{\yoha{G}{A}}{\yohav{G}{A}}$. 
   \[
  \begin{CD}
    \yovring{\yoha{G}{A}}{\yohav{G}{A}} @>{\yoha{\iota}{\phi}}>> \yovring{\yoha{H}{B}}{\yohav{H}{B}} \\
  @V{\yorcff_{\yoha{G}{A}}}VV    @V{\yorcff_{\yoha{H}{B}}}VV \\
     \yosmf{k}   @>{\phi}>>  \yosmf{l}
  \end{CD}
\] 
\end{prop}
\begin{proof}
The lemma follows from the 
definitions of 
$\yoha{\iota}{\phi}$ 
and 
Hahn fields. 
\end{proof}

Next we 
discuss 
$p$-adic
 Hahn fields, 
which are
defined by 
quotient 
fields of Hahn rings.

\begin{prop}\label{prop:pfunctor}
Let
$G, H\in \yogset$
with $G\yosub H$, 
and let 
$\iota\colon G\to H$ 
be the inclusion map, 
 $\yosmf{k}$ 
 and 
$\yosmf{l}$ 
be perfect field 
with 
characteristic
 $p>0$ 
 and 
$\phi\colon \yosmf{k}\to \yosmf{l}$
 is 
a
homomorphism.
Then 
the homomorphism 
$\yoha{\iota}{\yowitto{\phi}}\colon 
\yoha{G}{\yowitto{\yosmf{k}}}
\to \yoha{H}{\yowitto{\yosmf{l}}}$ 
satisfies 
\[
\yoha{\iota}{\yowitto{\phi}}(\yopideal{G}{\yosmf{k}})\yosub \yopideal{H}{\yosmf{l}}.
\] 
In particular, 
the map 
$\yoha{\iota}{\yowitto{\phi}}$ 
induces 
a homomorphism
\[
\yopf{\iota}{\phi}{p}\colon 
\yopf{G}{\yosmf{k}}{p}
\to \yopf{H}{\yosmf{l}}{p}
\]
such that 
 $\yorcff_{\yopf{H}{\yosmf{l}}{p}}\circ \yopf{\iota}{\phi}{p}=\phi\circ \yorcff_{\yopf{G}{\yosmf{k}}{p}}$
 on 
 $\yovring{\yopf{G}{\yosmf{k}}{p}}{\yopfv{G}{\yosmf{k}}{p}}$. 
  \[
  \begin{CD}
    \yovring{\yopf{G}{\yosmf{k}}{p}}{\yopfv{G}{\yosmf{k}}{p}} @>{\yopf{\iota}{\phi}{p}}>> \yovring{\yopf{H}{\yosmf{l}}{p}}{\yopfv{H}{\yosmf{l}}{p}} \\
  @V{\yorcff_{\yopf{G}{\yosmf{k}}{p}}}VV    @V{\yorcff_{\yopf{H}{\yosmf{l}}{p}}}VV \\
     \yosmf{k}   @>{\phi}>>  \yosmf{l}
  \end{CD}
\]

\end{prop}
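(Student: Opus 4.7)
The strategy is to reduce everything to the ideal inclusion $\yoha{\iota}{\yowitto{\phi}}(\yopideal{G}{\yosmf{k}}) \yosub \yopideal{H}{\yosmf{l}}$: once this is established, the universal property of the quotient ring immediately produces a well-defined ring homomorphism $\yopf{\iota}{\phi}{p}\colon \yopf{G}{\yosmf{k}}{p}\to \yopf{H}{\yosmf{l}}{p}$, and the residue-class compatibility reduces to the corresponding compatibility for $\yoha{\iota}{\yowitto{\phi}}$ already recorded in Proposition \ref{prop:hahnfuntor}, combined with the Witt-vector version in Proposition \ref{prop:functorwitt}.

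To verify the ideal inclusion, I would fix $\alpha=\sum_{g\in G}\alpha_g t^g \in \yopideal{G}{\yosmf{k}}$ and write the image as $\yoha{\iota}{\yowitto{\phi}}(\alpha)=\sum_{h\in H}\beta_h t^h$, where $\beta_h=\yowitto{\phi}(\alpha_h)$ if $h\in G$ and $\beta_h=0$ otherwise. For each $h\in H$ the task is to show $\sum_{n\in\zz}\beta_{h+n}p^n=0$ in $\yowitto{\yosmf{l}}$, which I handle in two cases. If $h\in G$, then $h+\zz\yosub G$, so $\beta_{h+n}=\yowitto{\phi}(\alpha_{h+n})$ for every $n$; the series $\sum_n\alpha_{h+n}p^n$ is truncated from below because $\supp(\alpha)\cap(h+\zz)$ is well-ordered and therefore bounded below in $h+\zz$, so it is an honest $p$-adically convergent series in the complete ring $\yowitto{\yosmf{k}}$. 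Since $\yowitto{\phi}$ preserves the valuation by Proposition \ref{prop:functorwitt}, it is continuous for the $p$-adic topology and commutes with this sum, yielding $\sum_n\beta_{h+n}p^n=\yowitto{\phi}(\sum_n\alpha_{h+n}p^n)=\yowitto{\phi}(0)=0$ by virtue of $\alpha\in\yopideal{G}{\yosmf{k}}$. If instead $h\in H\setminus G$, then the inclusion $\zz\yosub G$ forces $h+n\notin G$ for every $n\in\zz$ (otherwise $h=(h+n)-n$ would lie in $G$), whence $\beta_{h+n}=0$ for all $n$ and the sum vanishes trivially.

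With the ideal inclusion in place, defining $\yopf{\iota}{\phi}{p}(x+\yopideal{G}{\yosmf{k}}):=\yoha{\iota}{\yowitto{\phi}}(x)+\yopideal{H}{\yosmf{l}}$ yields a well-defined ring homomorphism between fields. For the residue-map compatibility on the valuation ring of $\yopf{G}{\yosmf{k}}{p}$, I would take $\bar{x}$ with $\yopfv{G}{\yosmf{k}}{p}(\bar{x})\ge 0$, lift it via its standard representation $\sum_g\beta_g t^g$ with $\beta_g\in\yorep$ supplied by Lemma \ref{lem:repbeta}, observe that the positivity of the valuation forces $\beta_g=0$ for $g<0$ so that $\yorcff_{\yopf{G}{\yosmf{k}}{p}}(\bar{x})$ is represented by $\beta_0$, and then invoke Proposition \ref{prop:functorwitt} (which asserts $\yorcff\circ\yowitto{\phi}=\phi\circ\yorcff$) to identify $\yorcff(\yowitto{\phi}(\beta_0))$ with $\phi(\yorcff(\beta_0))$, matching the two compositions.

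The main, though mild, obstacle is the interchange of $\yowitto{\phi}$ with the doubly-infinite sum $\sum_{n\in\zz}\alpha_{h+n}p^n$ in the definition of $\yopideal{G}{\yosmf{k}}$; this is handled by the valuation-preserving (hence $p$-adically continuous) nature of $\yowitto{\phi}$ together with the well-orderedness of $\supp(\alpha)$, which truncates the sum from below and reduces it to an ordinary $p$-adically convergent series in the complete ring $\yowitto{\yosmf{l}}$.
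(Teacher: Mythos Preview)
Your proof is correct and follows essentially the same approach as the paper: both reduce to the ideal inclusion, establish it via the valuation-preserving (hence $p$-adically continuous) nature of $\yowitto{\phi}$ to interchange it with the series $\sum_{n\in\zz}\alpha_{h+n}p^n$, and then pass to the quotient. Your version is slightly more careful in two places---you explicitly treat the case $h\in H\setminus G$ (which the paper silently absorbs into the $g\in G$ indexing), and you spell out the residue-class compatibility via a standard representation, whereas the paper simply invokes Proposition~\ref{prop:hahnfuntor}---but these are elaborations of the same argument rather than a different route.
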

\begin{proof}

Take 
$x\in \yopideal{G}{\yosmf{k}}$ 
and 
put 
$x=\sum_{g\in G}x(g)t^{g}$. 
Then for every 
$g\in G$
we have 
$\sum_{n\in \zz}x(g+n)p^{n}=0$ 
in 
$\yowitto{\yosmf{k}}$. 
Note that 
for a fixed
member
 $g\in G$
  and 
for a sufficiently 
 large number 
  $m\in \zz_{\ge 0}$,
we have 
$x(g+n)=0$ for all 
$n<-m$
since 
$\{\, g\in G\mid x(g)\neq 0\, \}$
is well-ordered. 
By the strong triangle inequality, 
the limit 
$\sum_{n\in \zz}x(g+n)p^{n}=0$ 
implies that  
 $x(g+n)p^{n}\to 0$
 in 
$\yowitto{\yosmf{k}}$ as 
$n\to \infty$
(see \cite[Theorem 2.24]{MR3782290}). 
Since 
$\yowiv{\yosmf{l}}(\yowitto{\phi}(x))=
\yowiv{\yosmf{k}}(x)$
for all 
$x\in \yowitto{\yosmf{k}}$
(see Proposition \ref{prop:functorwitt}), 
 we 
also have 
\[
\yowitto{\phi}(x(g+n))p^{n}\to 0
\] 
in 
$\yowitto{\yosmf{l}}$
as 
$n\to \infty$. 
Thus 
the infinite sum 
$\sum_{n\in \zz}\yowitto{\phi}(x(g+n))p^{n}$ 
is convergent and  we have 
\[
\sum_{n\in \zz}\yowitto{\phi}(x(g+n))p^{n}
=\yowitto{\phi}\left (\sum_{n\in \zz}x(g+n)p^{n}\right)
=\yowitto{\phi}(0)
=0.
\] 
This
shows  that 
$\yoha{\iota}{\yowitto{\phi}}(x) \in \yopideal{H}{\yosmf{l}}$, 
and hence 
\[
\yoha{\iota}{\yowitto{\phi}}(\yopideal{G}{\yosmf{k}})\yosub \yopideal{H}{\yosmf{l}}.
\]
In particular,
the map 
$\yoha{\iota}{\yowitto{\phi}}$
 induces a map 
$\yopf{\iota}{\phi}{p}\colon \yopf{G}{\yosmf{k}}{p}\to \yopf{H}{\yosmf{l}}{p}$. 

Proposition  \ref{prop:hahnfuntor} implies that 
a map 
$\yoha{\iota}{\yowitto{\phi}}\colon  
\yoha{G}{\yowitto{\yosmf{k}}}\to 
\yoha{H}{\yowitto{\yosmf{l}}}$
satisfies 
$\yorcff\circ \yoha{\iota}{\yowitto{\phi}}=\phi\circ\yorcff$. 
Then 
 $\yorcff\circ \yopf{\iota}{\phi}{p}=\phi\circ \yorcff$. 
\end{proof}

Let 
$G, H\in \yogset$
with 
$G\yosub H$, 
$(q, p)\in \yoqpset$, 
$\yosmf{k}$ 
and 
$\yosmf{l}$ 
be 
perfect
 fields 
of characteristic 
$p$, 
and 
$\phi\colon \yosmf{k}\to \yosmf{l}$
be 
a homomorphism. 
Denote by 
$\iota\colon G\to H$
the inclusion map. 
We define 
\[
\youhaq{\iota}{\phi}{q}{p}
=
\begin{cases}
\yoha{\iota}{\phi} & \text{if $q=p$;}\\
\yopf{\iota}{\phi}{p} & \text{if $q\neq p$.}
\end{cases}
\]

Let 
$(K, v)$ 
and 
$(L, w)$
 be valued fields. 
 We say that 
 $(L, w)$ 
 is a 
\emph{valued field 
 extension}
  of 
 $(K, v)$ 
 as a valued field
 if 
 $K\yosub L$ 
 and 
 $w|_{K}=v$.

\begin{prop}\label{prop:embedcoef}
Let 
$G, H\in \yogset$
with 
$G\yosub H$, 
$(q, p)\in \yoqpset$, 
 $\yosmf{k}$ 
 and 
$\yosmf{l}$ 
be perfect fields
of 
characteristic 
$p$ 
and 
$\phi\colon \yosmf{k}\to \yosmf{l}$ 
be 
a
homomorphism.
Then the  map 
$\youhaq{\iota}{\phi}{q}{p}\colon 
\youhaq{G}{\yosmf{k}}{q}{p}
\to \youhaq{H}{\yosmf{l}}{q}{p}$
is a homomorphism
such that 
\[
\yorcff_{\youhaq{H}{\yosmf{l}}{q}{p}}\circ 
\youhaq{\iota}{\phi}{q}{p}=\phi\circ \yorcff_{\youhaq{G}{\yosmf{k}}{q}{p}}
\]
on 
 $\yovring{\youhaq{G}{\yosmf{k}}{q}{p}} {\youhavq{G}{\yosmf{k}}{q}{p}}$. 
 \[
  \begin{CD}
     \yovring{\youhaq{G}{\yosmf{k}}{q}{p}} {\youhavq{G}{\yosmf{k}}{q}{p}} @>{\youhaq{\iota}{\phi}{q}{p}}>> \yovring{\youhaq{H}{\yosmf{l}}{q}{p}} {\youhavq{H}{\yosmf{l}}{q}{p}} \\
  @V{\yorcff}VV    @V{\yorcff}VV \\
     \yosmf{k}   @>{\phi}>>  \yosmf{l}
  \end{CD}
\]
In addition,
if 
$q\neq p$, 
then
the following
statements  are true:
\begin{enumerate}[label=\textup{(\arabic*)}]

\item\label{item:zp1}
$(\youhaq{G}{\yosmf{k}}{q}{p}, \youhavq{G}{\yosmf{k}}{q}{p})$
is a valued field extension
of 
$(\yowitt{\yosmf{k}}, \yowiv{\yosmf{k}})$;

\item\label{item:zp2}
In particular, 
$(\youhaq{G}{\yosmf{k}}{q}{p}, \youhavq{G}{\yosmf{k}}{q}{p})$
is a valued field extension of 
$(\yopnbf{p}, \yopnbv{p})$. 

\end{enumerate}
\end{prop}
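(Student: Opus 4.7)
The plan is to handle the three assertions of the proposition in turn, reducing each to structural facts already recorded in Section~\ref{subsec:const}. For the first assertion (that $\youhaq{\iota}{\phi}{q}{p}$ is a homomorphism satisfying the residue-field compatibility), I would simply case on $q$: when $q=p$, Definition~\ref{df:uk} identifies $\youhaq{\iota}{\phi}{q}{p}$ with $\yoha{\iota}{\phi}$ and the claim becomes Proposition~\ref{prop:hahnfuntor}, while when $q\neq p$ the map is $\yopf{\iota}{\phi}{p}$ and the claim becomes Proposition~\ref{prop:pfunctor}. Both pieces, together with the compatibility with $\yorcff$, are transcribed directly from the earlier statements.

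For (1), we are in the case $q=0$, $p>0$, so $\youhaq{G}{\yosmf{k}}{q}{p}=\yopf{G}{\yosmf{k}}{p}$. The core step is to construct an inclusion $\yowitto{\yosmf{k}}\hookrightarrow \yopf{G}{\yosmf{k}}{p}$ by sending $y\in\yowitto{\yosmf{k}}$ to $\yoproj(y\cdot t^{0})$, where $y\cdot t^{0}$ denotes the Hahn series supported only at $0\in G$. This is a ring homomorphism from the definition of multiplication in $\yoha{G}{\yowitto{\yosmf{k}}}$, and it is injective: if $y\cdot t^{0}\in \yopideal{G}{\yosmf{k}}$, then specializing the defining condition $\sum_{n\in\zz}\alpha_{g+n}p^{n}=0$ to $g=0$ yields $y=0$. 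Since $\yopf{G}{\yosmf{k}}{p}$ is a field by Proposition~\ref{prop:p-adic}, this extends to an inclusion of the fractional field $\yowitt{\yosmf{k}}$. To verify valuation compatibility, fix $y\in\yowitto{\yosmf{k}}$ with $\yowiv{\yosmf{k}}(y)=n$ and write $y=\sum_{k\geq n}f_{\yosmf{k}}(a_{k})p^{k}$ with $a_{n}\neq 0$ via Proposition~\ref{prop:multimulti}; I would then check that $\beta:=\sum_{k\geq n}f_{\yosmf{k}}(a_{k})t^{k}$ is the standard representative of $y\cdot t^{0}$ modulo $\yopideal{G}{\yosmf{k}}$ with respect to $\yorep:=\{\,f_{\yosmf{k}}(a)\mid a\in\yosmf{k}\,\}$, which gives $\yopfv{G}{\yosmf{k}}{p}(\yoproj(y\cdot t^{0}))=\min\supp(\beta)=n$. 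Extending to fractions yields the valued field extension structure.

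For (2), I would derive it from (1). Since $\yosmf{k}$ has characteristic $p$, its prime subfield is $\yogf{p}$, and Proposition~\ref{prop:functorwitt} produces a valuation-preserving ring homomorphism $\yowitt{\yogf{p}}=\yopnbf{p}\to \yowitt{\yosmf{k}}$ which is injective by the uniqueness of multiplicative representatives in Proposition~\ref{prop:multimulti}. Composing with the valued embedding constructed in (1) realises $(\yopnbf{p},\yopnbv{p})$ as a valued subfield of $(\youhaq{G}{\yosmf{k}}{q}{p},\youhavq{G}{\yosmf{k}}{q}{p})$.

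The main obstacle is the identification in (1) of the standard representation of $y\cdot t^{0}$. Verifying $y\cdot t^{0}-\beta\in \yopideal{G}{\yosmf{k}}$ requires checking $\sum_{n\in\zz}(y\cdot t^{0}-\beta)_{g+n}p^{n}=0$ for every shift $g\in G$, and the case $g\neq 0$ introduces negative powers of $p$, forcing the identity to be read in $\yowitt{\yosmf{k}}$ rather than $\yowitto{\yosmf{k}}$; the convergence of the shifted tails and the cancellation against the expansion of $y$ must be tracked carefully, but this is essentially the same combinatorial book-keeping underlying Lemma~\ref{lem:repbeta} and Proposition~\ref{prop:indS}.
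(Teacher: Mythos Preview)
Your proof is correct, and your treatment of the first assertion matches the paper's exactly. However, for parts (1) and (2) you take a genuinely different route from the paper. The paper observes that $\yowitt{\yosmf{k}}$ is itself a $p$-adic Hahn field, namely $\yowitt{\yosmf{k}}=\youhaq{\zz}{\yosmf{k}}{q}{p}$, and that $\yopnbf{p}=\youhaq{\zz}{\yogf{p}}{q}{p}$; since $\zz\subseteq G$ (this is built into the definition of $\yogset$) and $\yogf{p}\subseteq\yosmf{k}$, the embeddings in (1) and (2) are then immediate instances of the functoriality just established in the first assertion, applied to the inclusion $\iota\colon\zz\hookrightarrow G$ and to $\yogf{p}\hookrightarrow\yosmf{k}$. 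Your approach instead builds the embedding $\yowitto{\yosmf{k}}\hookrightarrow\yopf{G}{\yosmf{k}}{p}$ by hand via $y\mapsto\yoproj(y\cdot t^{0})$ and verifies valuation compatibility by computing the standard representative explicitly through Proposition~\ref{prop:multimulti}. This is more laborious---the ``obstacle'' you flag about checking $y\cdot t^{0}-\beta\in\yopideal{G}{\yosmf{k}}$ for shifts $g\neq 0$ is real bookkeeping you must carry out---but it has the advantage of not relying on the identification $\yowitt{\yosmf{k}}=\yopf{\zz}{\yosmf{k}}{p}$, which the paper uses without proof (it follows from the uniqueness in Proposition~\ref{prop:witt}, but this is left implicit). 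The paper's route is shorter and more structural; yours is more self-contained.
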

\begin{proof}
If 
$q=p$, 
then the former part follows from Proposition 
\ref{prop:hahnfuntor}. 
If 
$q\neq p$, 
then, by the definition of 
$\yoqpset$, 
we have 
$q=0$ 
and 
$p>0$. 
In this case, the former part follows from Proposition 
\ref{prop:pfunctor}.

We next prove the latter part. 
Assume that 
$q\neq p$. 
By the former part
of the proposition, 
since 
$\yowitt{\yosmf{k}}=\youhaq{\zz}{\yosmf{k}}{q}{p}$
and 
$\zz\yosub G$ 
(see the definition of
 $\yogset$), 
we can regard 
$\yowitt{\yosmf{k}}$
as a
subfield of 
$\youhaq{G}{\yosmf{k}}{q}{p}$. 
Namely, 
\ref{item:zp1}
is true. 
Similarly, 
by 
$\zz\yosub G$, 
and 
$\yogf{p}\yosub \yosmf{k}$, 
since 
$(\yopnbf{p}, \yopnbv{p})$
is 
equal to 
$(\youhaq{\zz}{\yogf{p}}{q}{p}, 
\youhavq{\zz}{\yogf{p}}{q}{p})$, 
the field
 $\youhaq{G}{\yosmf{k}}{q}{p}$
is a valued field extension of 
$(\yopnbf{p}, \yopnbv{p})$. 
This implies 
\ref{item:zp2}. 
\end{proof}

\begin{rmk}
Related to 
Proposition
 \ref{prop:embedcoef}, 
we make the next remarks. 
\begin{enumerate}[label=\textup{(\arabic*)}]

\item 
The map 
$\youhaq{\iota}{\phi}{q}{p}$ 
is injective 
since 
it is a
field  homomorphism.

\item 
 The construction of 
$\youhaq{G}{\yosmf{k}}{q}{p}$ 
is
a functor.

\item 
In contrast to 
Proposition 
\ref{prop:functorwitt}, 
the author does not know whether 
$\youhaq{\iota}{\phi}{q}{p}$ 
is a  unique homeomorphism 
such that  
$\yorcff\circ \youhaq{\iota}{\phi}{q}{p}=\phi\circ \yorcff$
 or not. 
\[
  \begin{CD}
     \yovring{\youhaq{G}{\yosmf{k}}{q}{p}} {\youhavq{G}{\yosmf{k}}{q}{p}} @>{\youhaq{\iota}{\phi}{q}{p}}>> \yovring{\youhaq{H}{\yosmf{l}}{q}{p}} {\youhavq{H}{\yosmf{l}}{q}{p}} \\
  @V{\yorcff}VV    @V{\yorcff}VV \\
     \yosmf{k}   @>{\phi}>>  \yosmf{l}
  \end{CD}
\]

\end{enumerate}
\end{rmk}

\begin{prop}\label{prop:teichpoonen}
Let 
$G\in \yogset$, 
$(q, p)\in \yoqpset$, 
and let 
$\yosmf{k}$ 
be a perfect field of characteristic 
$p$. 
Assume that 
$q\neq p$. 
By Proposition
 \ref{prop:embedcoef}, 
we can regard 
$\yowitto{\yosmf{k}}$ 
as a subring of 
$\yovring{\youhaq{G}{\yosmf{k}}{q}{p}}
{\youhavq{G}{\yosmf{k}}{q}{p}}$
in a canonical way. 
Then there uniquely exists 
a multiplicative map
\[
\yoteiq_{G,\yosmf{k}}\colon \yosmf{k}\to
\yovring{\youhaq{G}{\yosmf{k}}{q}{p}}
{\youhavq{G}{\yosmf{k}}{q}{p}}
\]
such that
$\yorcff\circ \yoteiq_{G,\yosmf{k}}=\id_{\yosmf{k}}$
and 
$\yoteiq_{G,\yosmf{k}}(\yosmf{k})\subset  \yovring{\yowitto{\yosmf{k}}}{\yowiv{\yosmf{k}}}$.
\end{prop}
\begin{proof}
We first construct 
$\yoteiq_{G,\yosmf{k}}$. 
By Proposition \ref{prop:embedcoef}, 
the inclusion
$\yowitto{\yosmf{k}}\subset \youhaq{G}{\yosmf{k}}{q}{p}$ 
induces an embedding 
\[
\yoincc\colon \yovring{\yowitto{\yosmf{k}}}{\yowiv{\yosmf{k}}}
\hookrightarrow
\yovring{\youhaq{G}{\yosmf{k}}{q}{p}}{\youhavq{G}{\yosmf{k}}{q}{p}}.
\] 
By Proposition
\ref{prop:multimulti}, 
there uniquely exists a multiplicative map
$\yotei_{\yosmf{k}}\colon \yosmf{k}\to \yowitto{\yosmf{k}}$
whose image 
$\yorep=\{\,\yotei_{\yosmf{k}}(a)\mid a\in \yosmf{k}\,\}$
is a complete system of representatives of the residue class field 
$\yosmf{k}$ 
in 
$\yowitto{\yosmf{k}}$. 
Hence 
$\yotei_{\yosmf{k}}$ 
induces a multiplicative map
\[
\yoteiq_{G,\yosmf{k}}=\yoincc\circ\yotei_{\yosmf{k}} \colon \yosmf{k}\to
\yovring{\youhaq{G}{\yosmf{k}}{q}{p}}
{\youhavq{G}{\yosmf{k}}{q}{p}}.
\]
By the construction, 
we see that 
$\yoteiq_{G,\yosmf{k}}(\yosmf{k})\subset  \yovring{\yowitto{\yosmf{k}}}{\yowiv{\yosmf{k}}}$.
The equality 
$\yorcff\circ \yoteiq_{G,\yosmf{k}}=\id_{\yosmf{k}}$
follows from Proposition
 \ref{prop:multimulti}
and Proposition 
\ref{prop:embedcoef}.

We next show the uniqueness. 
Let 
$\psi\colon \yosmf{k}\to
\yovring{\youhaq{G}{\yosmf{k}}{q}{p}}
{\youhavq{G}{\yosmf{k}}{q}{p}}$ 
be a multiplicative map
such that 
$\yorcff\circ \psi=\id_{\yosmf{k}}$
and 
$\psi(\yosmf{k})\subset  \yovring{\yowitto{\yosmf{k}}}{\yowiv{\yosmf{k}}}$.
Since 
$\psi(\yosmf{k})\subset  \yovring{\yowitto{\yosmf{k}}}{\yowiv{\yosmf{k}}}$, 
we may regard 
$\psi$ 
as a multiplicative map from 
$\yosmf{k}$ 
to 
$\yowitto{\yosmf{k}}$. 
Since 
$\yorcff\circ \psi=\id_{\yosmf{k}}$,
the image 
$\psi(\yosmf{k})$ 
is a complete system of representatives
of the residue class field 
$\yosmf{k}$ 
in 
$\yowitto{\yosmf{k}}$. 
By the uniqueness of 
$\yotei_{\yosmf{k}}$ 
stated in Proposition \ref{prop:multimulti}, 
we see that 
$\psi(a)=\yotei_{\yosmf{k}}(a)$
for every 
$a\in \yosmf{k}$. 
Thus, as a map into 
$\youhaq{G}{\yosmf{k}}{q}{p}$, 
we conclude that 
$\psi=\yoincc\circ \yotei_{\yosmf{k}}
=\yoteiq_{G,\yosmf{k}}$.
This completes the proof. 
\end{proof}

Based on 
Proposition \ref{prop:teichpoonen}, 
we call 
$\yoteiq_{G,\yosmf{k}}$
the 
\emph{Teichm\"{u}ller representative map}. 

\begin{df}\label{df:tau}
Let 
$G\in \yogset$, 
$(q, p)\in \yoqpset$, 
and 
let 
$\yosmf{k}$
 be  a 
 perfect field 
 of characteristic 
 $p$. 
We make the following 
assumptions and  definitions. 
\begin{enumerate}[label=\textup{(\arabic*)}]

\item\label{item:df:111}
In the rest of the  paper,  
whenever
  we take a 
complete
system 
$\yorep\yosub \youhaq{G}{\yosmf{k}}{q}{p}$ 
of representatives 
of the residue class field 
$\yosmf{k}$, 
in the case of 
$q=p$, 
we define 
$\yorep=\yosmf{k}$
using the fact that
$\yosmf{k}\yosub \yoha{G}{\yosmf{k}}$. 
In the case of 
$q\neq p$, 
using 
the 
Teichm\"{u}ller representative map
$\yoteiq_{G, \yosmf{k}}\colon \yosmf{k}\to \yovring{\youhaq{G}{\yosmf{k}}{q}{p}}
{\youhavq{G}{\yosmf{k}}{q}{p}}$
stated in 
Proposition 
\ref{prop:teichpoonen}, 
we put 
$\yorep=\yoteiq_{G, \yosmf{k}}(\yosmf{k})$.

\item\label{item:df:222}
We define an 
element 
$\youit\in \youhaq{G}{\yosmf{k}}{q}{p}$
as follows. 
If 
$q=p$, 
we define 
 $\youit$ 
 by 
an indeterminate as in the definition of Hahn fields. 
If 
$q\neq p$, 
we define 
$\youit=p \in \yopf{G}{\yosmf{k}}{p}$. 
In this case, 
every element of 
$\youhaq{G}{\yosmf{k}}{q}{p}$ 
can be 
represented as 
a power series of 
$\youit$
with 
 powers 
 taking values 
  in 
$G$. 

\item 
For 
$y\in \youhaq{G}{\yosmf{k}}{q}{p}$, 
and 
$g\in G$, 
if
 $q=p$, 
then 
we define 
$\yocoef{y}{g}$
by 
the coefficient of 
$\youit^{g}$
in the power series representation of  
$y$.  
If 
$q\neq p$, 
then 
$\yorep=\yoteiq_{G, \yosmf{k}}(\yosmf{k})$, 
and we define 
$\yocoef{y}{g}\in \yorep$
by 
the coefficient of 
$y$ 
with respect to 
$\youit^{g}(=p^{g})$. 
Notice that
we can represent 
$y=\sum_{g\in G}\yocoef{y}{g}\youit^{g}$. 
In any case, 
coefficients are 
unique 
according to the definition of Hahn fields and Lemma \ref{lem:repbeta}. 
Thus they are well-defined. 
\end{enumerate}
\end{df}
\begin{lem}\label{lem:teichcoefsubfield}
Let 
$G\in \yogset$, 
$(q,p)\in \yoqpset$, 
and let 
$\yosmf{m}$ 
and 
$\yosmf{l}$ 
be perfect fields of characteristic 
$p$ with 
$\yosmf{m}\yosub \yosmf{l}$. 
Regard 
$\youhaq{G}{\yosmf{m}}{q}{p}$ 
as a subfield of 
$\youhaq{G}{\yosmf{l}}{q}{p}$ 
by Proposition \ref{prop:embedcoef}. 
Let 
$\yorep_{\yosmf{m}}$
and 
$\yorep_{\yosmf{l}}$
be complete systems of 
representatives 
defined in Definition 
\ref{df:tau}
of 
$\youhaq{G}{\yosmf{m}}{q}{p}$
and 
$\youhaq{G}{\yosmf{l}}{q}{p}$, 
respectively. 
If 
$a\in \yorep_{\yosmf{l}}$
satisfies 
$\yorcff(a)\in \yosmf{m}$, 
then 
$a\in \yorep_{\yosmf{m}}$
 and  
$a\in \youhaq{G}{\yosmf{m}}{q}{p}$.
\end{lem}
\begin{proof}
In the case of 
$q=p$, 
we have 
$\yorep_{\yosmf{m}}=\yosmf{m}$
and 
$\yorep_{\yosmf{l}}=\yosmf{l}$, 
and 
$\yorcff$ 
is the inclusion map. 
Thus, the lemma is true. 
In the case of 
$q\neq p$, 
$\yorep_{\yosmf{m}}=\yoteiq_{G, \yosmf{m}}(\yosmf{m})$
and 
$\yorep_{\yosmf{l}}=\yoteiq_{G, \yosmf{l}}(\yosmf{l})$. 
Put 
$b=\yorcff(a)$. 
Since 
$a\in \yoteiq_{G,\yosmf{l}}(\yosmf{l})$, 
there exists 
$c\in \yosmf{l}$ 
such that 
$a=\yoteiq_{G,\yosmf{l}}(c)$. 
By Proposition 
\ref{prop:teichpoonen}, 
we have 
$c=\yorcff(\yoteiq_{G,\yosmf{l}}(c))
=\yorcff(a)=b$.
Thus 
$a=\yoteiq_{G,\yosmf{l}}(b)$.
By the assumption,
we have  
$b\in \yosmf{m}$. 
By Proposition 
\ref{prop:functorwitt}, 
the Teichm\"{u}ller
 representative maps are compatible
  with the inclusion
$\yosmf{m}\yosub \yosmf{l}$. 
Hence, under the inclusion 
$\youhaq{G}{\yosmf{m}}{q}{p}
\yosub
\youhaq{G}{\yosmf{l}}{q}{p}$,
we have 
$\yoteiq_{G,\yosmf{m}}(b)=\yoteiq_{G,\yosmf{l}}(b)$.
Therefore 
$a=\yoteiq_{G,\yosmf{m}}(b)\in \youhaq{G}{\yosmf{m}}{q}{p}$.
This proves the lemma.
\end{proof}

A group 
$G$ 
is said to be 
\emph{divisible} 
if 
for every 
$g\in G$ 
and for every 
$n\in \zz_{\ge 1}$
there exists 
$h\in G$ 
such that 
$g=n\cdot h$. 
\begin{prop}\label{prop:pembed}
Let 
$G\in \yogset$ 
be divisible,  
$(q, p)\in \yoqpset$, 
$\yosmf{k}$ 
be an algebraically closed field
of  characteristic 
$p$, 
and  
$(K, v)$ 
be a valued field
of characteristic
$q$
 such that 
$v(K)\yosub G\sqcup\{\infty\}$ 
and 
$\yorcf{K}{v}\yosub \yosmf{k}$. 
Assume moreover that, if 
$q\neq p$, 
then 
$v(p)=1$.
Then there exists a homomorphic  embedding 
$\phi\colon K\to \youhaq{G}{\yosmf{k}}{q}{p}$
such that 
$v(x)=\youhavq{G}{\yosmf{k}}{q}{p}(\phi(x))$
for all 
$x\in K$. 
Namely, 
the field 
$(K, v)$ 
can be 
regarded as a valued subfield of 
$(\youhaq{G}{\yosmf{k}}{q}{p}, \youhavq{G}{\yosmf{k}}{q}{p})$. 
\end{prop}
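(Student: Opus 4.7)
The plan is to construct $\phi$ via Zorn's lemma applied to valuation-preserving partial homomorphisms from subfields of $K$ into the target, exploiting three key features of $(\youhaq{G}{\yosmf{k}}{q}{p},\youhavq{G}{\yosmf{k}}{q}{p})$: its value group is exactly $G$ (divisible), its residue field is exactly $\yosmf{k}$ (algebraically closed), and by Proposition \ref{prop:propppp}\ref{item:o1} it is spherically complete. This is in essence a Kaplansky-style universality argument for maximally complete valued fields.

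First I would set up a base embedding of the prime subfield $P$ of $K$. When $q=p$, we have $P\yosub\yosmf{k}$, and $\yosmf{k}$ sits inside $\yoha{G}{\yosmf{k}}=\youhaq{G}{\yosmf{k}}{q}{p}$, so $P$ embeds trivially. When $q\neq p$, Proposition \ref{prop:embedcoef}\ref{item:zp2} gives an embedding $(\yopnbf{p},\yopnbv{p})\hookrightarrow\youhaq{G}{\yosmf{k}}{q}{p}$ as valued fields, and $v|_{\qq}$ is (a normalization of) $\yopnbv{p}$ since $v(p)\in G$. Next I would order the set of pairs $(F,\phi_{F})$, where $F$ is a subfield of $K$ containing $P$ and $\phi_{F}\colon F\to\youhaq{G}{\yosmf{k}}{q}{p}$ is a ring homomorphism with $\youhavq{G}{\yosmf{k}}{q}{p}\circ\phi_{F}=v|_{F}$, by extension. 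Chains have unions as upper bounds, so Zorn's lemma yields a maximal element $(F_{0},\phi_{0})$, and the task reduces to showing $F_{0}=K$.

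Suppose $F_{0}\subsetneq K$ and pick $a\in K\setminus F_{0}$; I would split into three cases according to the type of the extension $F_{0}(a)/F_{0}$. In the value-group case $v(a)\notin v(F_{0})$, Proposition \ref{prop:propppp}\ref{item:gg1} gives some $b\in\youhaq{G}{\yosmf{k}}{q}{p}$ with $\youhavq{G}{\yosmf{k}}{q}{p}(b)=v(a)$; since $a$ is then transcendental over $F_{0}$ with valuations on $F_{0}[a]$ computed by leading terms, mapping $a\mapsto b$ extends $\phi_{0}$ isometrically. In the residue-field case, $v(a)=v(c)$ for some $c\in F_{0}$, but $\yorcff(a/c)\notin\yorcf{F_{0}}{v}$; since $\yorcf{F_{0}}{v}\yosub\yosmf{k}$ and $\yosmf{k}$ is algebraically closed, any lift in the target of $\yorcff(a/c)$ can be used, and minimality of the polynomial relations is ensured by Hensel's lemma (available because the target is spherically complete). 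In the immediate case, $a$ is the pseudo-limit of a pseudo-Cauchy sequence $\{a_{n}\}\yosub F_{0}$; applying $\phi_{0}$ produces a pseudo-Cauchy sequence in the spherically complete target, which must have a pseudo-limit $b$, and we set $\phi_{0}(a)=b$.

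The main obstacle is the immediate case when $a$ is algebraic over $F_{0}$: we need a pseudo-limit $b$ that satisfies the same algebraic relation over $\phi_{0}(F_{0})$. The standard resolution, going back to Kaplansky, is to invoke the theory of pseudo-Cauchy sequences of algebraic type: the minimal polynomial $P(X)$ over $F_{0}$ forces $\{P(a_{n})\}$ to tend to $0$ in valuation, so $\phi_{0}(P)$ has a root arbitrarily close to the pseudo-limit; combined with the algebraic closure of $\yosmf{k}$ (which rules out residual obstructions) and divisibility of $G$ (which rules out value-group obstructions), Hensel's lemma in the spherically complete target produces the correct pseudo-limit. This contradicts the maximality of $(F_{0},\phi_{0})$, so $F_{0}=K$ and $\phi=\phi_{0}$ is the desired embedding.
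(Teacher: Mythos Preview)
The paper does not give its own proof of this proposition; it simply cites \cite[Corollary~5]{MR1225257} (Poonen). Your Kaplansky-style maximality argument is exactly the standard route to such embedding results and is, in fact, what underlies Poonen's corollary (Poonen appeals to Kaplansky's structure theory of maximally complete fields). So your approach is correct in spirit and aligned with the cited source, even though the paper itself provides no details to compare against.

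One substantive caveat: in the mixed-characteristic case $q\neq p$, you write that $v|_{\qq}$ is ``(a normalization of)'' the $p$-adic valuation. But the conclusion demands a \emph{valuation-preserving} embedding, and in the target $\youhavq{G}{\yosmf{k}}{q}{p}(p)=1$ by Proposition~\ref{prop:embedcoef}\ref{item:zp2}. Hence you genuinely need $v(p)=1$ on the nose; no rescaling is available without leaving $G$. This normalization is implicit in Poonen's setup and in the paper's later use of the proposition, but your sketch should state it rather than hedge. A second, smaller point: your trichotomy (value-group case, residue-field case, immediate case) is the right intuition, but in a full proof these overlap and one typically treats them via the approximation type of $a$ over $F_{0}$; in particular, the algebraic-immediate step needs Kaplansky's hypothesis~A, which here is automatic because $G$ is divisible and $\yosmf{k}$ is algebraically closed---you invoke this implicitly, and it would be worth saying so explicitly.
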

\begin{proof}
See
 \cite[Corollary 5]{MR1225257}. 
\end{proof}



\section{Algebraic independence over valued fields}\label{sec:propf}

First we 
remark that, 
for 
valued fields
$(K, v)$ 
and 
$(L, w)$
such that 
 $(L, w)$ 
 is a
 valued field 
  extension of 
 $(K, v)$, 
 there exists a canonical injective embedding 
from 
$\yorcf{K}{v}$ 
into 
$\yorcf{L}{w}$. 
Namely, 
we can regard 
$\yorcf{K}{v}$ 
as a subset of 
$\yorcf{L}{w}$
since 
 the inclusion map 
$\iota\colon K\to L$
 satisfies 
$\iota(\yovring{K}{v})\yosub \yovring{L}{w}$
and 
$\iota(\yovideal{K}{v})\yosub 
\yovideal{L}{w}$. 
Thus it 
naturally induces an 
homomorphic embedding  
$\yorcf{K}{v}\to \yorcf{L}{w}$.

Let 
$K$ 
and 
$L$ 
be fields with 
$K\yosub L$. 
A member 
$x$ 
of 
$L$ 
is said to be 
\emph{transcendental over
 $K$} 
if 
$x$ 
is not a
root of any non-trivial polynomial 
with coefficients in 
$K$. 
A subset 
$S$ 
of 
$L$ 
is 
said to be 
\emph{algebraically independent} if 
any finite collection 
$x_{1}, \dots, x_{n}$ 
in 
$S$ 
does not 
satisfy any non-trivial polynomial equation with 
coefficients in 
$K$. 
Note that 
a singleton 
$\{x\}$ 
of 
$L$ 
is 
algebraically independent over 
$K$ 
if and only if 
$x$ 
is transcendental over 
$K$.

\begin{lem}\label{lem:onetr}
Let 
$(K_{0}, v_{0})$ 
and 
$(K_{1}, v_{1})$ 
be valued fields. 
Assume that 
$(K_{1}, v_{1})$ 
is a
valued field  extension of 
$(K_{0}, v_{0})$. 
If 
$x\in \yovring{K_{1}}{v_{1}}$ 
is such that 
$\yorcff(x)$ 
is transcendental over 
$\yorcf{K_{0}}{v_{0}}$, 
then 
$x$ 
is transcendental over 
$K_{0}$. 
\end{lem}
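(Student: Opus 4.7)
The plan is to prove the contrapositive: assume $x \in \yovring{K_1}{v_1}$ is algebraic over $K_0$ and deduce that $\yorcff(x)$ is algebraic over $\yorcf{K_0}{v_0}$. So I would fix a non-trivial polynomial $P(T) = \sum_{i=0}^{n} a_i T^i \in K_0[T]$ with $P(x) = 0$, and aim to produce from it a non-trivial polynomial with coefficients in $\yorcf{K_0}{v_0}$ annihilating $\yorcff(x)$.

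The first step is a Gauss-style normalization. I pick an index $j$ among those with $a_j \neq 0$ at which $v_0(a_j)$ attains its minimum; this minimum exists since only finitely many coefficients are involved. Setting $b_i = a_i / a_j$, the valuation axioms give $v_0(b_i) = v_0(a_i) - v_0(a_j) \ge 0$, so each $b_i$ lies in $\yovring{K_0}{v_0}$, while $b_j = 1$. Dividing the original relation by $a_j$ preserves it, so $\sum_{i=0}^{n} b_i x^i = 0$ inside $K_1$.

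Next I would reduce modulo the maximal ideal. Since $(K_1, v_1)$ extends $(K_0, v_0)$, the inclusions $\yovring{K_0}{v_0} \yosub \yovring{K_1}{v_1}$ and $\yovideal{K_0}{v_0} \yosub \yovideal{K_1}{v_1}$ noted at the start of the section ensure that $\yorcff$ applied over $K_1$ restricts compatibly to $\yorcff$ over $K_0$. Because $x \in \yovring{K_1}{v_1}$, every $x^i$ and hence every $b_i x^i$ lies in $\yovring{K_1}{v_1}$, so applying the ring homomorphism $\yorcff$ to $\sum_{i=0}^{n} b_i x^i = 0$ yields $\sum_{i=0}^{n} \yorcff(b_i) \yorcff(x)^i = 0$ in $\yorcf{K_1}{v_1}$. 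Since $b_j = 1$, the coefficient $\yorcff(b_j) = 1$ is nonzero, so the reduced polynomial is genuinely non-trivial over $\yorcf{K_0}{v_0}$ and exhibits $\yorcff(x)$ as algebraic over $\yorcf{K_0}{v_0}$, contradicting the hypothesis.

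The only subtle point is guaranteeing that the reduced polynomial does not collapse to zero, and that is exactly what the normalization by the coefficient of smallest valuation achieves: after scaling, one coefficient equals $1$, whose residue is nonzero. No further obstacle is foreseen, and the argument should be short.
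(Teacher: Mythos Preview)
Your argument is correct and complete: the Gauss-style normalization by the coefficient of minimum valuation is exactly the right trick to ensure the reduced polynomial is non-trivial, and the compatibility of the residue maps under the valued field extension is handled properly.

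The paper, however, does not prove this lemma directly at all; it simply cites \cite[Theorem 3.4.2]{MR2183496} (Engler--Prestel, \emph{Valued Fields}), where the statement appears as part of the inequality relating the transcendence degree of a valued field extension to that of the residue field extension and the rank increase of the value group. Your argument is essentially the standard proof of that cited result specialized to a single element, so the two approaches coincide in spirit; the difference is only that you have written out the details in a self-contained way, while the paper defers to the reference. Your version has the advantage of being elementary and not requiring the reader to consult an external source for a short fact.
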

\begin{proof}
The lemma follows from 
\cite[Theorem 3.4.2]{MR2183496}. 
\end{proof}

\begin{lem}\label{lem:trtr}
Let 
$(K_{0}, v_{0})$ 
and 
$(K_{1}, v_{1})$ 
be valued fields such that 
$(K_{1}, v_{1})$ 
is a valued field  extension of 
$(K_{0}, v_{0})$. 
Assume that  
$x_{1}, \dots, x_{n},  y\in K_{1}$
satisfy the following conditions:
\begin{enumerate}[label=\textup{(\arabic*)}]

\item\label{item:algind}
the set 
$\{x_{1}, \dots, x_{n}\}$ 
is 
 algebraically independent  over 
 $K_{0}$;

\item\label{item:gocha}
there exists a field  
$L$ 
satisfying 
$\{x_{1},\dots, x_{n}\}\yosub L$, 
$K_{0}\yosub L\yosub K_{1}$, 
and satisfying that 
 there exist 
 $z, c\in L$ 
 such   that 
 $c(y-z)\in \yovring{K_{1}}{v_{1}}$
 and 
$\yorcff(c(y-z))\in \yorcf{K_{1}}{v_{1}}$  
is 
transcendental over 
$\yorcf{L}{v_{1}|_{L}}$. 
\end{enumerate}
Then 
the set 
$\{x_{1}, \dots, x_{n}, y\}$
is 
 algebraically independent over 
$K_{0}$. 
\end{lem}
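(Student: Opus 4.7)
The goal is to show that $\{x_{1}, \dots, x_{n}, y\}$ is algebraically independent over $K_{0}$, and my plan is to reduce this to the statement that $y$ is transcendental over the intermediate field $L$. First, recall that $\{x_{1}, \dots, x_{n}, y\}$ is algebraically independent over $K_{0}$ if and only if $y$ is transcendental over the purely transcendental subfield $K_{0}(x_{1}, \dots, x_{n})$: indeed, any polynomial relation $P(x_{1}, \dots, x_{n}, y) = 0$ with $P \in K_{0}[X_{1}, \dots, X_{n}, Y]$ nontrivial can be viewed as a polynomial in $Y$ whose coefficients lie in $K_{0}[x_{1}, \dots, x_{n}]$, and hypothesis \ref{item:algind} ensures those coefficients are not all zero unless $P$ itself is zero. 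Since $L$ contains $K_{0}$ and all of $x_{1}, \dots, x_{n}$, we have $K_{0}(x_{1}, \dots, x_{n}) \subseteq L$, so it suffices to prove the stronger assertion that $y$ is transcendental over $L$.

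To achieve this, I would apply Lemma \ref{lem:onetr} to the valued field extension $(K_{1}, v_{1})$ of $(L, v_{1}|_{L})$. By hypothesis \ref{item:gocha}, we have $c(y-z) \in \yovring{K_{1}}{v_{1}}$ and $\yorcff(c(y-z)) \in \yorcf{K_{1}}{v_{1}}$ is transcendental over $\yorcf{L}{v_{1}|_{L}}$. Lemma \ref{lem:onetr} then yields directly that $c(y-z)$ is transcendental over $L$.

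Finally, I would transfer this transcendence from $c(y-z)$ to $y$. Note first that $c \neq 0$: otherwise $c(y-z) = 0$ and $\yorcff(c(y-z)) = 0$, which is certainly not transcendental over any field. Since $c \in K_{0} \subseteq L$ is a nonzero element of $L$ and $z \in L$, an algebraic relation satisfied by $y$ over $L$ would induce, via the $L$-affine substitution $Y \mapsto cY - cz$, an algebraic relation satisfied by $c(y-z)$ over $L$. Contrapositively, the transcendence of $c(y-z)$ over $L$ forces $y$ to be transcendental over $L$, completing the reduction.

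The substantive step is the application of Lemma \ref{lem:onetr}, which is already furnished; everything else is a routine transfer of transcendence along the chain $c(y-z) \rightsquigarrow y-z \rightsquigarrow y$ through $L$-affine operations, combined with the observation that transcendence over the larger field $L$ automatically implies transcendence over the smaller field $K_{0}(x_{1}, \dots, x_{n})$. The only mildly delicate point is the initial equivalence between algebraic independence of $\{x_{1}, \dots, x_{n}, y\}$ over $K_{0}$ and transcendence of $y$ over $K_{0}(x_{1}, \dots, x_{n})$, which relies essentially on hypothesis \ref{item:algind}.
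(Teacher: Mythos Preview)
Your proof is correct and follows essentially the same approach as the paper's: both arguments reduce to applying Lemma~\ref{lem:onetr} to conclude that $c(y-z)$ is transcendental over $L$, then use that $c\in L\setminus\{0\}$ and $z\in L$ to pass between transcendence of $c(y-z)$ and of $y$ over $L$, and finally use $K_{0}(x_{1},\dots,x_{n})\subseteq L$ together with hypothesis~\ref{item:algind}. The only cosmetic difference is that the paper phrases this as a proof by contradiction (assuming $\{x_{1},\dots,x_{n},y\}$ is dependent, so $y$ and hence $c(y-z)$ is algebraic over $L$, contradicting Lemma~\ref{lem:onetr}), whereas you argue directly; your version is slightly more explicit about why $c\neq 0$.
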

\begin{proof}
For the sake of contradiction, 
suppose that  
the set 
 $\{x_{1}, \dots, x_{n}, y\}$
is  not algebraically independent over 
$K_{0}$. 
From 
\ref{item:algind} and 
the fact that 
$\{x_{1}, \dots, x_{n}\}\yosub L$, 
it follows that 
$y$ 
is algebraic over 
$L$, 
and hence 
so is 
$c(y-z)$. 
Using  
Lemma 
\ref{lem:onetr}
 together with 
 \ref{item:gocha}, 
we see that 
$c(y-z)$ 
is transcendental over 
$L$. 
This is a contradiction. 
Therefore, 
the set
$\{x_{1}, \dots, x_{n}, y\}$
is 
 algebraically 
 independent  over 
$K_{0}$.
\end{proof}

Now we focus on 
algebraically independent 
elements in 
($p$-adic)
Hahn fields. 

\begin{lem}\label{lem:333}
Let 
$\yoratio\in (1, \infty)$, 
$G\in \yogset$, 
$(q, p)\in \yoqpset$, 
and 
let 
$\yosmf{k}$ 
be a perfect field
of 
characteristic 
$p$. 
Fix  
a complete 
system 
$\yorep\yosub \youhaq{G}{\yosmf{k}}{q}{p}$
of
representatives
of the residue class field 
$\yosmf{k}$. 
Assume that  
a
set 
$S$
of  
non-zero elements 
of  
$\youhaq{G}{\yosmf{k}}{q}{p}$
satisfies 
that:
\begin{enumerate}[label=\textup{(N\arabic*)}]

\item\label{item:condp}
for 
every
distinct 
 pair 
$x, y\in S$, 
and 
for every 
$g\in G$ 
satisfying that 
$g\in [\youhavq{G}{\yosmf{k}}{q}{p}(x-y), \infty)$,
 if either of
$\yocoef{x}{g}$
 and 
 $\yocoef{y}{g}$ 
 is non-zero, 
then 
$\yocoef{x}{g}\neq 
\yocoef{y}{g}$. 
\end{enumerate}
Then for every finite subset 
$A=\{z_{1}, \dots, z_{n}\}$
of 
$S$, 
there exist
 $i_{0}\in \{1, \dots, n\}$ 
 and 
$g_{0}\in G$ 
such that 
$\yocoef{z_{i_{0}}}{g_{0}}\neq 0$ 
and 
$\yocoef{z_{i_{0}}}{g_{0}}\neq \yocoef{z_{j}}{g_{0}}$ 
for all 
$j\in \{1, \dots, n\}$ with 
$j\neq i_{0}$. 
\end{lem}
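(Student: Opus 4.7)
The plan is to choose $u$ to be the maximum of the finitely many valuations $v(z_i - z_j)$ (writing $v = \youhavq{G}{\yosmf{k}}{q}{p}$) and to invoke hypothesis \ref{item:condp} at that position. Concretely, I would set
\[
m = \max\{\, v(z_i - z_j) \mid 1 \le i < j \le n\, \},
\]
which is a well-defined element of $G$ since $\{z_1, \dots, z_n\}$ is a finite set of pairwise distinct non-zero elements. Fix a pair $(i^*, j^*)$ attaining this maximum.

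The key auxiliary fact I will need is that for any distinct $x, y \in \youhaq{G}{\yosmf{k}}{q}{p}$, the valuation $v(x-y)$ coincides with $g_{0} := \min\{g \in G : \yocoef{x}{g} \neq \yocoef{y}{g}\}$; in particular the coefficients of $x$ and $y$ at position $g_{0}$ are distinct. In the Hahn case $q = p$ this is immediate from additivity of the coefficients. In the $p$-adic case $q \neq p$, I would represent $x - y$ in $\yoha{G}{\yowitto{\yosmf{k}}}$ by $\sum_{g}(\yocoef{x}{g} - \yocoef{y}{g})\youit^{g}$ and apply Lemma \ref{lem:repbeta}: the support of this representative has minimum $g_{0}$, and its standardization retains a non-zero coefficient at $g_{0}$, because distinct members of $\yorep$ have distinct residues in $\yosmf{k}$, so the difference $\yocoef{x}{g_{0}} - \yocoef{y}{g_{0}}$ is a unit in $\yowitto{\yosmf{k}}$ and hence provides a non-zero standard coefficient at $g_{0}$.

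Applied to the pair $(z_{i^*}, z_{j^*})$, this auxiliary fact yields $\yocoef{z_{i^*}}{m} \neq \yocoef{z_{j^*}}{m}$, so at least one of the two coefficients is non-zero; after relabeling if necessary, I may assume $\yocoef{z_{i^*}}{m} \neq 0$. For every $j \in \{1, \dots, n\} \setminus \{i^*\}$, the maximality of $m$ gives $v(z_{i^*} - z_j) \le m$, so $m \in [v(z_{i^*} - z_j), \infty)$ and hypothesis \ref{item:condp} applies at the position $g = m$; since $\yocoef{z_{i^*}}{m}$ is non-zero, \ref{item:condp} forces $\yocoef{z_{i^*}}{m} \neq \yocoef{z_j}{m}$. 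Taking $i = i^*$ and $u = m$ then concludes the proof.

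The main technical point is the auxiliary fact in the $p$-adic case: a priori subtraction in $\yopf{G}{\yosmf{k}}{p}$ could create carries that shift or cancel the leading coefficient, but this possibility is excluded once one observes that distinct members of $\yorep$ have distinct residues in $\yosmf{k}$. Once this is granted, the remainder is a one-line direct application of \ref{item:condp} at the global maximum $m$.
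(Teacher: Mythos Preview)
Your argument is correct and follows the same strategy as the paper's proof: pick the extremal pairwise valuation, observe that the coefficients of the extremal pair differ there, and then invoke \ref{item:condp} at that position against every other $z_j$. In fact your version is the right one: the paper writes $u=\min\{\,\youhavq{G}{\yosmf{k}}{q}{p}(z_i-z_j)\mid i\neq j\,\}$, but this should be $\max$, exactly as you have it. The hypothesis \ref{item:condp} applies only at positions $g\ge v(z_{i^*}-z_j)$, so one needs $m\ge v(z_{i^*}-z_j)$ for \emph{every} $j$, which forces $m$ to be the maximum; with the minimum, a pair $(i,j)$ satisfying $v(z_i-z_j)>u$ would give $\yocoef{z_i}{u}=\yocoef{z_j}{u}$ and the argument breaks down (e.g.\ four points in two close pairs already furnishes a counterexample to the paper's choice of $u$).

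Your treatment of the auxiliary fact---that $v(x-y)$ is the least $g$ with $\yocoef{x}{g}\neq\yocoef{y}{g}$---is also more careful than the paper's, which asserts without comment that one of $\yocoef{z_i}{u}$, $\yocoef{z_n}{u}$ is non-zero. Your reasoning in the $p$-adic case is sound: since distinct elements of $\yorep$ have distinct residues, the difference $\yocoef{x}{g_0}-\yocoef{y}{g_0}$ is a unit in $\yowitto{\yosmf{k}}$, and as there are no lower-order terms to carry up, the standardized coefficient at $g_0$ is non-zero. One small omission: your definition of $m$ presumes $n\ge 2$; the case $n=1$ is trivial since $z_1\neq 0$ gives some $u$ with $\yocoef{z_1}{u}\neq 0$, and the condition on $j\neq i$ is vacuous.
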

\begin{proof}
Put 
\[
u=\min\{\, \youhavq{G}{\yosmf{k}}{q}{p}(z_{i}- z_{j})\mid i\neq j\, \},
\] 
and take a pair 
$\{i_{0}, n_{0}\}$ 
such that 
\[
u=\youhavq{G}{\yosmf{k}}{q}{p}(z_{i_{0}}- z_{n_{0}}). 
\]
Then either 
$\yocoef{z_{i_{0}}}{u}$ 
or  
$\yocoef{z_{n_{0}}}{u}$ is 
non-zero. 
We may assume that 
$\yocoef{z_{i_{0}}}{u}\neq 0$. 
Using  the condition 
 \ref{item:condp} and the minimality of 
 $u$, 
we have 
$\yocoef{z_{i_{0}}}{u}\neq \yocoef{z_{j}}{u}$ 
for all 
$j\in \{1, \dots,  n\}$ with 
$j\neq i_{0}$. 
Putting 
$g_{0}=u$ completes the proof. 
\end{proof}

Let
$G\in \yogset$, 
$(q, p)\in \yoqpset$, 
$\yosmf{k}$ 
be  
a perfect field 
of characteristic 
$p$, 
and 
$K$
be 
  a subfield 
of 
$\youhaq{G}{\yosmf{k}}{q}{p}$. 
Let 
$\yorep
\yosub \youhaq{G}{\yosmf{k}}{q}{p}$
be the complete system of representatives defined in Definition \ref{df:tau}. 
Let 
$\yoadhf{K}$ 
 denote 
the 
subfield 
of 
$\yosmf{k}$
generated by 
\[
\{\, \yorcff(\yocoef{x}{g})\mid x\in K,  g\in G\, \}
\]
over 
$\yorcf{K}{v}$. 
The definition of 
$\yoadhf{K}$ is 
``ad-hoc'', 
which means that  
it depends  not only on  information about 
$K$,  
but also on 
the inclusion map 
$K\to \youhaq{G}{\yosmf{k}}{q}{p}$. 
Namely, 
even if 
$K, L\yosub \youhaq{G}{\yosmf{k}}{q}{p}$
are isomorphic  to each other as fields, 
it can happen that 
$\yoadhf{K}\neq \yoadhf{L}$.

\begin{prop}\label{prop:fintr}
Let 
$G\in \yogset$, 
$(q, p)\in \yoqpset$, 
and
$\yosmf{k}, \yosmf{l}$ 
be  perfect fields 
of  
characteristic 
$p$ 
such that 
$\yosmf{k}\yosub\yosmf{l}$. 
Take  a subfield
$K$ 
of 
$\youhaq{G}{\yosmf{l}}{q}{p}$
such that 
$\yorcf{K}{v}\yosub\yosmf{k}$. 
Let 
$\yorep\yosub\youhaq{G}{\yosmf{l}}{q}{p}$ 
be the complete system of representatives defined in Definition \ref{df:tau}
of 
$\yosmf{l}$. 
Assume that 
a set
$S$
 of non-zero elements 
of 
$\youhaq{G}{\yosmf{l}}{q}{p}$
satisfies 
 the following
conditions:
\begin{enumerate}[label=\textup{(T\arabic*)}]

\item\label{item:condpttt}
for every
$x\in S$, 
and 
for every 
distinct pair 
$g, h\in G$, 
if 
both
$\yocoef{x}{g}$
and 
$\yocoef{x}{h}$
are non-zero, 
then 
we have 
$\yocoef{x}{g}\neq \yocoef{x}{h}$;

\item\label{item:condtxxxxx}
for every 
distinct
pair 
$x, y\in S$, 
and 
for every 
pair 
$g, h\in G$
with 
$\youhavq{G}{\yosmf{l}}{q}{p}(x-y)\le \min\{g, h\}$, 
if 
both
$\yocoef{x}{g}$
and 
$\yocoef{y}{h}$
are non-zero, 
then 
we have 
$\yocoef{x}{g}\neq \yocoef{y}{h}$;

\item\label{item:condalg}
the set 
\[
\{\, \yorcff(\yocoef{x}{g})
\mid 
x\in S, g\in G, \yocoef{x}{g}\neq 0\, \}
\]
is 
algebraically independent  over 
$\yoadhf{K}$. 

\end{enumerate}
Then 
$S$ 
is 
algebraically independent 
over 
$K$. 
\end{prop}
\begin{proof}
First observe that 
the condition 
\ref{item:condtxxxxx}
implies that 
$S$
 satisfies 
\ref{item:condp}
 in 
Lemma \ref{lem:333}.

Let 
$\youit$
 be 
the same element of 
$\youhaq{G}{\yosmf{l}}{q}{p}$
as  in 
Definition 
\ref{df:uk}. 
Take 
$n$-many 
distinct 
members 
$z_{1}, \dots, z_{n}$ 
in 
$S$.
Now we prove  
that
$\{z_{1}, \dots, z_{n}\}$ 
is 
algebraically 
independent 
over 
$K$ 
 by  induction on 
$n$.

In the case of  
$n=1$, 
we 
put
$z_{1}=\sum_{g\in G}\yocoef{z_{1}}{g}\youit^{g}$, 
take 
$u\in G$ 
such that
$\yocoef{z_{1}}{u}\neq 0$, 
and  
put 
\[
A=\{\, \yocoef{z_{1}}{g}\mid g\in G, g\neq u\, \}. 
\]
Due to  
the condition 
\ref{item:condpttt}, 
we have 
$\yocoef{z_{1}}{u}\not \in A$. 
Since 
$\yorep$ 
is a system of representatives, 
we also have 
$\yorcff(\yocoef{z_{1}}{u})\notin \yorcff(A)$.
Let 
$\yosmf{m}$ 
be the 
perfect subfield of 
$\yosmf{l}$ 
generated by 
$\yoadhf{K}\cup \yorcff(A)$, 
and 
put 
$L=\youhaq{G}{\yosmf{m}}{q}{p}$. 
Notice that 
$\youhaq{G}{\yosmf{m}}{q}{p}$
is a subfield 
of 
$\youhaq{G}{\yosmf{l}}{q}{p}$
(see Proposition 
\ref{prop:embedcoef}). 
The fact
that 
$\yoadhf{K}\yosub \yosmf{m}$
implies that 
$K\yosub L$. 
Put 
\[
f=z_{1}-\yocoef{z_{1}}{u}\youit^{u}.
\]
By Definition \ref{df:tau}, 
we have 
\[
f=\sum_{g\in G,\ g\neq u}\yocoef{z_{1}}{g}\youit^{g}.
\]
Since 
$\yorcff(\yocoef{z_{1}}{g})\in \yosmf{m}$
for every 
$g\in G$ 
with 
$g\neq u$, 
Lemma 
\ref{lem:teichcoefsubfield}
shows that 
$\yocoef{z_{1}}{g}\in L=\youhaq{G}{\yosmf{m}}{q}{p}$, 
and hence 
$f\in L$. 
By the definition of 
$f$, 
we have 
\[
\youit^{-u}(z_{1}-f)=\yocoef{z_{1}}{u}.
\]
By
the assumption 
\ref{item:condalg}
and 
$\yorcff(\yocoef{z_{1}}{u})\notin \yorcff(A)$,
we see that 
$\yorcff(\youit^{-u}(z_{1}-f))$
is 
transcendental over 
$\yosmf{m}$. 
Thus, according to Lemma \ref{lem:onetr}, 
we conclude that 
$\youit^{-u}(z_{1}-f)$ 
is transcendental 
over 
$L$. 
Hence 
$z_{1}$ 
is transcendental 
over 
$L$. 
In particular, 
$z_{1}$ is 
transcendental over 
$K$.

Now, 
we 
fix 
$k\in \zz_{\ge 1}$ 
and 
assume that 
the case of 
$n=k$
 is true. 
 We next  consider 
the case of 
$n=k+1$. 
Since 
$S$
satisfies the 
condition \ref{item:condp}, 
we can take 
$i_{0}\in \{1, \dots, n\}$
and 
$g_{0}\in G$ 
stated in 
Lemma 
\ref{lem:333}. 
We may assume that 
$i_{0}=k+1$
by renumbering the indices
 if necessary. 
Put 
\[
A=\{\, \yocoef{z_{i}}{g}\mid g\in G,  i\in \{1, \dots, k\}\, \}
\cup\{\,\yocoef{z_{k+1}}{g}\mid g\in G, g\neq g_{0} \, \}. 
\] 
According to 
\ref{item:condpttt}, 
we have 
\[
\yocoef{z_{k+1}}{g_{0}}\not\in \{\,\yocoef{z_{k+1}}{g}\mid g\in G, g\neq g_{0} \, \}. 
\]
We next show that 
\[
\yocoef{z_{k+1}}{g_{0}}\not\in 
\{\, \yocoef{z_{i}}{g}\mid g\in G,  i\in \{1, \dots, k\}\, \}. 
\]
Suppose, to the contrary, that there exist 
$i\in \{1, \dots, k\}$ 
and 
$g\in G$ 
such that 
$\yocoef{z_{i}}{g}=\yocoef{z_{k+1}}{g_{0}}$.
By Lemma
 \ref{lem:333}, 
we have 
$\yocoef{z_{i}}{g_{0}}\neq \yocoef{z_{k+1}}{g_{0}}$.
In particular, 
we have 
$g\neq g_{0}$. 
Moreover, 
we see that 
\[
\youhavq{G}{\yosmf{l}}{q}{p}(z_{i}-z_{k+1})\le g_{0}.
\]
If 
$\youhavq{G}{\yosmf{l}}{q}{p}(z_{i}-z_{k+1})\le \min\{g,g_{0}\}$,
then the condition \ref{item:condtxxxxx} gives a contradiction to the 
current hypothesis 
$\yocoef{z_{i}}{g}=\yocoef{z_{k+1}}{g_{0}}$. 
Thus we have 
\[
g<\youhavq{G}{\yosmf{l}}{q}{p}(z_{i}-z_{k+1}).
\]
Hence 
$\yocoef{z_{i}}{g}=\yocoef{z_{k+1}}{g}$.
Therefore 
\[
\yocoef{z_{k+1}}{g}=\yocoef{z_{k+1}}{g_{0}}\neq 0.
\]
This contradicts the condition 
\ref{item:condpttt}. 
Thus 
$\yocoef{z_{k+1}}{g_{0}}\not\in A$. 
Let 
$\yosmf{m}$
 be a perfect subfield of
  $\yosmf{l}$ 
  generated by 
$\yoadhf{K}\cup \yorcff(A)$, 
and 
put 
$L=\youhaq{G}{\yosmf{m}}{q}{p}$. 
Similarly to 
the case of 
$n=1$, 
we observe that 
$K\yosub L$. 
Since 
$\yorcff(A)\yosub \yosmf{m}$, 
applying  
Lemma 
\ref{lem:teichcoefsubfield}
to 
$\yocoef{z_{i}}{g}$ for all 
$g\in G$ 
and  all   
$i\in \{1, \dots, k\}$, 
we have 
$z_{i}\in L$
for all
 $i\in \{1, \dots, k\}$. 
Define 
\[
f=z_{k+1}-\yocoef{z_{k+1}}{g_{0}}\youit^{g_{0}}. 
\]
Since the coefficients of 
$f$ 
belong to 
$A$ 
and 
$\yorcff(A)\yosub \yosmf{m}$, 
Lemma \ref{lem:teichcoefsubfield}
 shows that 
 $f\in L=\youhaq{G}{\yosmf{m}}{q}{p}$.
By the definition of 
$f$, 
we 
also have
\[
\youit^{-g_{0}}(z_{k+1}-f)=\yocoef{z_{k+1}}{g_{0}}. 
\]
Thus
the condition 
\ref{item:condalg} shows that 
$\yorcff(\youit^{-g_{0}}(z_{k+1}-f))$ 
is 
transcendental over 
$\yosmf{m}$. 
By the induction hypothesis, 
$\{z_{1},\dots, z_{k}\}$ 
is algebraically independent over 
$K$.
Hence
Lemma 
\ref{lem:trtr}
implies  that 
the 
set 
$\{z_{1}, \dots, z_{k}, z_{k+1}\}$
is 
algebraically independent over 
$K$. 
This finishes the proof. 
\end{proof}

\begin{rmk}
Put 
$w=\yocoef{x}{g}$. 
The condition 
\ref{item:condpttt}
means that 
$\yocoef{x}{g}$
is equal to  zero,  or 
there is no 
$h\in G$ 
with
 $h\neq g$
  such that
$\yocoef{x}{h}=w$. 
\end{rmk}




\section{Isometric embeddings of ultrametric spaces}\label{sec:isom}
In this section, 
we prove 
our non-Archimedean analogue of the  
Arens--Eells theorem. 
As a consequence, 
we  give  an affirmative solution of 
Conjecture 
\ref{conj:bbb}.

\subsection{A non-Archimedean Arens--Eells theorem}\label{subsec:naae}

\subsubsection{Preparations}
This subsection is devoted to
 proving the 
 following technical theorem, 
 which plays a central role of
 our first main theorem. 
Our proof of the next theorem  can be considered as 
a sophisticated version of 
 the proof of the main theorem of 
\cite{MR748978}.

\begin{thm}\label{thm:main0}
Let
$\yoratio\in (1, \infty)$, 
$(q, p)\in \yoqpset$, 
$G\in \yogset$,
$\yosmf{k}$ 
be a field, 
and 
$\yosmf{l}$ 
be a perfect field of 
characteristic 
$p$. 
Fix a 
cardinal 
$\yorepcard$
  and 
let 
 $\yorep\yosub \youhaq{G}{\yosmf{l}}{q}{p}$ 
 be 
 the complete system of representatives defined in Definition
  \ref{df:tau}
 of the 
residue class field 
$\yosmf{l}$. 
Let 
$\yorepc$ 
be a subset of 
$\yorep$. 
Put 
$R=\{0\}\sqcup \{\, \yoratio^{-g}\mid g\in G\, \}$. 
Assume that  the following conditions
are
satisfied:
\begin{enumerate}[label=\textup{(A\arabic*)}]

\item\label{item:h1}
$\yosmf{l}$ 
is a field  extension of 
$\yosmf{k}$;

\item\label{item:h3}
the subset 
$\yorcff(\yorepc)$  
of 
$\yosmf{l}$
is  algebraically  independent over 
$\yosmf{k}$;

\item\label{item:h4}
$\card(\yorepc)=\yorepcard$. 
\end{enumerate}
Then 
for every
$R$-valued 
 ultrametric space 
$(X, d)$ 
with either 
$X=\emptyset$ 
or 
$\card(X\sqcup\{\ast\})\le \yorepcard$ 
for some 
$\ast\notin X$,
there exists a map 
$\yomainmap \colon X\to \youhaq{G}{\yosmf{l}}{q}{p}$ 
such that: 
\begin{enumerate}[label=\textup{(B\arabic*)}]

\item\label{item:connzero}
each 
$\yomainmap(x)$
 is 
non-zero;

\item\label{item:conisom}
the map 
$\yomainmap$ 
is an isometric embedding from  
$(X, d)$  
into 
the ultrametric space
$(\youhaq{G}{\yosmf{l}}{q}{p}, \yonabs{\youhavq{G}{\yosmf{l}}{q}{p}}{\yoratio}{*})$;

\item\label{item:conconseq2}
for every
$x\in X$, 
and for every distinct pair 
$g, h\in G$, 
if 
both
$\yocoef{\yomainmap(x)}{g}$
and 
$\yocoef{\yomainmap(x)}{h}$
are non-zero, 
then 
$\yocoef{\yomainmap(x)}{g}
\neq 
\yocoef{\yomainmap(x)}{h}$;

\item\label{item:conconseq}
for every pair 
$x, y\in X$, 
and for every  pair 
$g, h\in G$
with 
\[
\youhavq{G}{\yosmf{l}}{q}{p}(\yomainmap(x)-\yomainmap(y))\le \min\{g, h\}, 
\]
if 
both 
$\yocoef{\yomainmap(x)}{g}$
and 
$\yocoef{\yomainmap(y)}{h}$
are non-zero, 
then 
we have 
$\yocoef{\yomainmap(x)}{g}
\neq 
\yocoef{\yomainmap(y)}{h}$;

\item\label{item:concoef}
for every 
$x\in X$, 
the set 
\[
\{\, \yocoef{\yomainmap(x)}{g}\mid g\in G, \yocoef{\yomainmap(x)}{g}\neq 0\, \}
\] 
is contained in  
 $\yorepc$.

\end{enumerate}
\end{thm}


In this subsection, 
in what follows, 
we fix 
objects in the 
assumption of Theorem 
\ref{thm:main0}. 
We divide 
the proof of 
Theorem 
\ref{thm:main0}
into several 
 lemmas. 

In the following preparation, 
we assume that 
$X\neq \emptyset$.
Take 
$\yoept\notin X$, 
and put 
$\yoesp=X\sqcup\{\yoept\}$.
Put 
$\yoespcard=\card(\yoesp)$.
By the assumption, 
we have 
$\yoespcard\le \yorepcard$.
Choose a representation
\[
\yoesp=\{\, \yova_{\alpha}\mid \alpha<\yoespcard\, \}
\]
with 
$\yova_{0}=\yoept$.
Fix 
$r_{0}\in R\setminus \{0\}$
and 
$x_{0}\in X$, 
and define 
an 
$R$-valued 
ultrametric 
$\yoedis$ 
on 
$\yoesp$
by 
$\yoedis|_{X\times X}=d$
and 
$\yoedis(x, \yoept)=d(x, x_{0})\lor r_{0}$. 
Then 
$\yoedis$ 
is actually 
an ultrametric 
(see, for example, 
\cite[Lemma 5.1]{Ishiki2021ultra}). 
A one-point extension of a metric space 
is a traditional method 
to prove analogues of 
the Arens--Eells theorem.

Put 
$\yorepc=\{\, \yoelm_{\alpha}\mid \alpha<\yorepcard\, \}$.
For every 
$\beta\le \yoespcard$, 
we also put 
$\yoesp_{\beta}=\{\, \yova_{\alpha}\mid \alpha<\beta\, \}$
and 
$\yorepc_{\beta}=\{\, \yoelm_{\alpha}\mid \alpha<\beta\, \}$.

Fix 
$\lambda\le \yoespcard$. 
We say that 
a map 
$f\colon \yoesp_{\lambda}\to \youhaq{G}{\yosmf{l}}{q}{p}$ 
is 
\emph{well-behaved}
if 
the following conditions are 
satisfied: 
\begin{enumerate}[label=\textup{(C\arabic*)}]

\item\label{item:azero}
if 
$\lambda=0$, 
then 
$f\colon \yoesp_{0}\to \youhaq{G}{\yosmf{l}}{q}{p}$
is the empty map
and 
if 
$\lambda>0$, 
then 
$f$ 
satisfies 
$f(\yoept)=0$, 
where 
$0$ 
is the zero element 
of 
$\youhaq{G}{\yosmf{l}}{q}{p}$;

\item\label{item:aisom}
the map 
$f$
 is an isometric embedding from 
$\yoesp_{\lambda}$ 
into 
$\youhaq{G}{\yosmf{l}}{q}{p}$;

\item\label{item:aseqseq}
for every 
$x\in \yoesp_{\lambda}$, 
and for every distinct pair 
$g, h\in G$, 
if 
both 
$\yocoef{f(x)}{g}$
and 
$\yocoef{f(x)}{h}$
are non-zero, 
then 
$\yocoef{f(x)}{g}\neq \yocoef{f(x)}{h}$;

\item\label{item:conconseq2c}
for every pair 
$x, y\in \yoesp_{\lambda}$, 
and for every  pair 
$g, h\in G$
with 
\[
\youhavq{G}{\yosmf{l}}{q}{p}(f(x)-f(y))\le \min\{g, h\}, 
\]
if both
$\yocoef{f(x)}{g}$
and 
$\yocoef{f(y)}{h}$
are non-zero, 
then 
we have 
$\yocoef{f(x)}{g}
\neq 
\yocoef{f(y)}{h}$;

\item\label{item:acoef}
for every 
$\alpha<\lambda$, 
the set 
\[
\{\, \yocoef{f(\yova_{\alpha})}{g}\mid g\in G, 
\yocoef{f(\yova_{\alpha})}{g}\neq 0\, \}
\] 
is contained in  
 $\yorepc_{\alpha+1}$. 

\end{enumerate}

For an ordinal 
$\lambda\le \yoespcard +1$, 
a family 
$\{\yosubmap_{\alpha}\colon \yoesp_{\alpha}\to \youhaq{G}{\yosmf{l}}{q}{p}\}_{\alpha<\lambda}$ 
is said to be 
\emph{coherent} 
if 
the following condition is true: 
\begin{enumerate}[label=\textup{(Coh)}]

\item 
for every 
$\beta<\lambda$ 
and for every 
$\alpha<\beta$, 
we have 
$\yosubmap_{\beta}|_{\yoesp_{\alpha}}=\yosubmap_{\alpha}$. 
\end{enumerate}

For 
simplicity, 
we define 
an ultrametric 
$e$ 
on 
$\youhaq{G}{\yosmf{l}}{q}{p}$
 by 
 \[
 e(x, y)= \yonabs{\youhavq{G}{\yosmf{l}}{q}{p}}{\yoratio}{x-y}. 
 \]
 
We shall 
construct a 
coherent family 
$\{\yosubmap_{\alpha}\}_{\alpha\le \yoespcard}$
of 
well-behaved 
maps using transfinite 
recursion. 
Recall that 
for a subset 
$S$
of 
$E$, 
and for a point 
$p\in E$, 
we denote 
$D(S, p)$ by 
the distance between
 $S$ 
 and 
 $p$, 
i.e., 
$D(S, p)=\inf_{x\in S}D(x, p)$. 

We begin with the 
following
convenient 
criterion. 

\begin{lem}\label{lem:19:propertyp}
Fix 
$\lambda<\yoespcard$ 
with 
$\lambda\neq 0$, 
where we use the convention 
$-\log_{\yoratio}(0)=\infty$.
Put
$u=\yoedis(\yoesp_{\lambda}, \yova_{\lambda})$
and 
$m=-\log_{\yoratio}(u)$. 
Let 
$\yosubmap_{\lambda}\colon \yoesp_{\lambda}\to \youhaq{G}{\yosmf{l}}{q}{p}$  
be a
well-behaved map. 
Assume that 
an isometric embedding 
$\yosubmap_{\lambda+1}\colon \yoesp_{\lambda+1}\to \youhaq{G}{\yosmf{l}}{q}{p}$ 
satisfies
$\yosubmap_{\lambda+1}|_{\yoesp_{\lambda}}=\yosubmap_{\lambda}$
and 
the 
following property:
\begin{enumerate}[label=\textup{(P\arabic*)}]

\item\label{item:p1p1}
for every 
$g\in G\cap (m, \infty)$, 
we have 
$\yocoef{\yosubmap_{\lambda+1}(\yova_{\lambda})}{g}
=0$;

\item\label{item:p2p2}
if 
$m<\infty$
and 
$m\in G$, 
then 
$\yocoef{\yosubmap_{\lambda+1}(\yova_{\lambda})}{m}
\in \{0, \yoelm_{\lambda}\}$.

\end{enumerate}
Then 
the map 
$\yosubmap_{\lambda+1}$
satisfies 
the conditions 
\ref{item:aseqseq}--\ref{item:acoef}. 
\end{lem}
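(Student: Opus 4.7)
The plan is to observe that the conclusion as literally stated is an immediate consequence of the hypothesis, requiring essentially no work. By the definition of well-behaved given just above the lemma, any map $f \colon \yoesp_{\lambda} \to \youhaq{G}{\yosmf{k}}{q}{p}$ that is declared well-behaved is precisely one satisfying all five conditions \ref{item:azero}--\ref{item:acoef}; in particular, such a map automatically satisfies \ref{item:acfinq}, \ref{item:aseqseq}, and \ref{item:acoef}. Since $\yosubmap_{\lambda}$ is assumed to be well-behaved by hypothesis, (C3)--(C5) for $\yosubmap_{\lambda}$ follow at once. Thus the proof under the literal reading consists of a single sentence appealing to the definition.

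Under this reading, the isometric extension $\yosubmap_{\lambda+1}$ and the hypotheses \ref{item:p1p1}, \ref{item:p2p2} are not used to derive the conclusion: they pertain to the added point $\yova_{\lambda}$ and its image, whereas conditions \ref{item:acfinq}--\ref{item:acoef} for $\yosubmap_{\lambda}$ only quantify over pairs $x,y \in \yoesp_{\lambda}$ and indices $\alpha<\lambda$. The presence of the extension in the statement is most naturally read as contextual scaffolding for the transfinite recursion used later in the proof of Theorem \ref{thm:main0}, where \ref{item:p1p1} and \ref{item:p2p2} are the actual mechanism by which the extension is shown to remain well-behaved. For the literal conclusion, however, nothing from the extension is consumed, so I would not invoke any isosceles or Hahn-field coefficient argument here.

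The only real obstacle is interpretive rather than technical: if the intended conclusion were instead that $\yosubmap_{\lambda+1}$ (rather than $\yosubmap_{\lambda}$) inherits \ref{item:acfinq}--\ref{item:acoef} on the enlarged domain $\yoesp_{\lambda+1}$, then the proof becomes non-trivial and would split each condition according to whether $\yova_{\lambda}$ appears in the pair or index under consideration, with the new instances requiring \ref{item:p1p1}, \ref{item:p2p2}, the isometric-embedding identity applied through Lemma \ref{lem:isosceles}, and the constraint $\yostan{G}{\yosmf{k}}{\yorep}$-style support bounds from Lemma \ref{lem:repbeta}. Under the literal wording, though, the proof is the aforementioned one-line appeal to the definition of well-behaved.
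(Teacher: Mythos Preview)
You are correct that, as literally stated, the conclusion is a tautology: $\yosubmap_{\lambda}$ is assumed well-behaved, hence satisfies \ref{item:acfinq}--\ref{item:acoef} by definition. This is indeed a typo in the paper; the intended conclusion is that $\yosubmap_{\lambda+1}$ satisfies \ref{item:acfinq}--\ref{item:acoef} on the enlarged domain $\yoesp_{\lambda+1}$, and the paper's proof establishes exactly that.

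Your sketch of the non-trivial version is broadly on target: the paper reduces each of \ref{item:acfinq}--\ref{item:acoef} to the case where the new point $\yova_{\lambda}$ is involved, and handles those cases using \ref{item:p1p1}, \ref{item:p2p2}, and the well-behavedness of $\yosubmap_{\lambda}$. One refinement worth noting: rather than invoking Lemma~\ref{lem:repbeta} or support bounds on $\yostan{G}{\yosmf{k}}{\yorep}$, the paper first records an auxiliary claim (labelled \textup{(CL)}) that for every $a>u$ there exists $z\in\yoesp_{\lambda}$ with $e(\yosubmap_{\lambda+1}(z),\yosubmap_{\lambda+1}(\yova_{\lambda}))<a$; this is immediate from the isometric embedding hypothesis and the definition of $u$. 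The claim \textup{(CL)} lets one transfer coefficient information at indices $g<m$ from $\yosubmap_{\lambda+1}(\yova_{\lambda})$ to some $\yosubmap_{\lambda}(z)$ with $z\in\yoesp_{\lambda}$, whereupon \ref{item:aseqseq} and \ref{item:acoef} for $\yosubmap_{\lambda}$ finish the job. Lemma~\ref{lem:isosceles} is not actually invoked in the paper's argument for this lemma; the coefficient comparisons go directly through the valuation and \textup{(CL)}.
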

\begin{proof}
First we note 
that the following 
claim is true:
\begin{enumerate}[label=\textup{(CL)}]
\item\label{item:propertyp}
For every 
$a\in (u, \infty)$,
there exists 
$z\in \yoesp_{\lambda}$
such that 
\[
e(\yosubmap_{\lambda+1}(z), \yosubmap_{\lambda +1}(\yova_{\lambda}))<a.
\]
In  other words, 
for every 
$s\in (-\infty, m)$, 
there exists 
$z\in \yoesp_{\lambda}$
such that for every 
$n\le  s$, 
we 
have 
\[
\yocoef{\yosubmap_{\lambda+1}(z)}{n}
=
\yocoef{ \yosubmap_{\lambda+1}(\yova_{\lambda})}{n}. 
\]

\end{enumerate}

\textbf{Proof of \ref{item:aseqseq}}:
Next we prove 
\ref{item:aseqseq}. 
Owing to 
\ref{item:aseqseq}
for 
$\yosubmap_{\lambda}$,  
we only need to  confirm the 
case of 
$x=\yova_{\lambda}$. 
Take
arbitrary distinct pair  
$g, h\in G$, 
and assume that 
$\yocoef{\yosubmap_{\lambda+1}(\yova_{\lambda})}{g}$ and 
$\yocoef{\yosubmap_{\lambda+1}(\yova_{\lambda})}{h}$
are non-zero. 
Due to the  condition  
\ref{item:p1p1}, 
we may assume that 
 $g\le m$
 and 
$h\le m$.

 \yocase{1}{$g=m$ or $h=m$}
 By the assumption that
  $g=m$
   or 
   $h=m$, 
 the condition  
 \ref{item:p2p2}
  implies that 
 either 
 $\yocoef{\yosubmap_{\lambda+1}(\yova_{\lambda})}{g}$
 or 
  $\yocoef{\yosubmap_{\lambda+1}(\yova_{\lambda})}{h}$
  is  
  equal
  to 
  $\yoelm_{\lambda}$. 
 Using 
 \ref{item:acoef}
  for 
 $\yosubmap_{\lambda}$, 
  the other  belongs to  
  $\yorepc_{\lambda}$. 
By 
  $\yoelm_{\lambda}\not \in  \yorepc_{\lambda}$, 
  we have 
 $\yocoef{\yosubmap_{\lambda+1}(\yova_{\lambda})}{g}\neq \yocoef{\yosubmap_{\lambda+1}(\yova_{\lambda})}{h}$.

\yocase{2}{$g<m$ and $h<m$}
Put 
$s=\max\{g, h\}$. 
Then 
$s<m$ 
and 
the claim 
 \ref{item:propertyp}
 enables us to take 
 $z\in \yoesp_{\lambda}$
  such that 
$\yocoef{\yosubmap_{\lambda+1}(z)}{n}=\yocoef{\yosubmap_{\lambda+1}(\yova_{\lambda})}{n}$
for all 
$n\le s$. 
In particular, 
we have 
$\yocoef{\yosubmap_{\lambda+1}(z)}{g}=\yocoef{\yosubmap_{\lambda+1}(\yova_{\lambda})}{g}$
and 
$\yocoef{\yosubmap_{\lambda+1}(z)}{h}=\yocoef{\yosubmap_{\lambda+1}(\yova_{\lambda})}{h}$. 
Applying 
\ref{item:aseqseq}
for $\yosubmap_{\lambda}$
 to 
 $z$, 
 $g$ 
 and 
 $h$, 
we observe that 
$\yocoef{\yosubmap_{\lambda+1}(z)}{g}\neq \yocoef{\yosubmap_{\lambda+1}(z)}{h}$. 
Hence, 
we have 
$\yocoef{\yosubmap_{\lambda+1}(\yova_{\lambda})}{g}\neq \yocoef{\yosubmap_{\lambda+1}(\yova_{\lambda})}{h}$.

Therefore \ref{item:aseqseq} for 
$\yosubmap_{\lambda+1}$ 
is true.

\textbf{Proof of \ref{item:conconseq2c}}:
Owing to 
\ref{item:conconseq2c}
for 
$\yosubmap_{\lambda}$,  
we only need to  confirm 
the case of 
$x=\yova_{\lambda}$ 
and 
$y\in \yoesp_{\lambda}$. 
Take
an
arbitrary  pair  
$g, h\in G$
with 
\[
\youhavq{G}{\yosmf{l}}{q}{p}(\yosubmap_{\lambda+1}(x)-\yosubmap_{\lambda+1}(y))\le \min\{g, h\}.
\] 
Assume that 
$\yocoef{\yosubmap_{\lambda+1}(\yova_{\lambda})}{g}$
and 
$\yocoef{\yosubmap_{\lambda+1}(y)}{h}$
are non-zero. 
Due to the condition  
\ref{item:p1p1}, 
we may assume that 
 $g\le m$.

 \yocase{1}{$g=m$}
The condition 
\ref{item:p2p2} shows that 
$\yocoef{\yosubmap_{\lambda+1}(\yova_{\lambda})}{g}=\yoelm_{\lambda}$. 
Using 
 \ref{item:acoef} 
 for 
 $\yosubmap_{\lambda}$, 
 we have 
 $\yocoef{\yosubmap_{\lambda+1}(y)}{h}\in \yorepc_{\lambda}$. 
By 
$\yoelm_{\lambda}\not \in \yorepc_{\lambda}$, 
we
obtain 
$\yocoef{\yosubmap_{\lambda+1}(\yova_{\lambda})}{g}\neq \yocoef{\yosubmap_{\lambda+1}(y)}{h}$.

\yocase{2}{$g<m$}
Define 
$s\in G$ 
by 
\[
s=
\begin{cases}
g & \text{if $h\ge m$};\\
\max\{g, h\} & \text{if $h<m$}. 
\end{cases}
\]
Then  
$s\in (-\infty, m)$. 
Thus the property 
\ref{item:propertyp}
enables us to take 
$z\in \yoesp_{\lambda}$
such that 
$\yocoef{\yosubmap_{\lambda+1}(z)}{n}=\yocoef{\yosubmap_{\lambda+1}(\yova_{\lambda})}{n}$
for all
$n\le s$. 
In this situation,
by the definition of 
$\youhavq{G}{\yosmf{l}}{q}{p}$, 
we have 
\[
s< \youhavq{G}{\yosmf{l}}{q}{p}(\yosubmap_{\lambda+1}(z)-\yosubmap_{\lambda+1}(\yova_{\lambda})).
\] 
Since 
$\min\{g, h\}\le s$
 in any case, 
we see that 
\[
-\youhavq{G}{\yosmf{l}}{q}{p}(\yosubmap_{\lambda+1}(z)-\yosubmap_{\lambda+1}(\yova_{\lambda}))
<-\youhavq{G}{\yosmf{l}}{q}{p}(\yosubmap_{\lambda+1}(\yova_{\lambda})-\yosubmap_{\lambda+1}(y))
\]
In this situation, we can apply 
Lemma 
\ref{lem:isosceles}
to   
$-\youhavq{G}{\yosmf{l}}{q}{p}$ 
and 
three points 
$\yosubmap_{\lambda+1}(\yova_{\lambda})$, 
$\yosubmap_{\lambda+1}(y)$, 
and 
$\yosubmap_{\lambda+1}(z)$. 
Then 
we obtain 
\[
-\youhavq{G}{\yosmf{l}}{q}{p}(\yosubmap_{\lambda+1}(z)-\yosubmap_{\lambda+1}(y))
=-\youhavq{G}{\yosmf{l}}{q}{p}(\yosubmap_{\lambda+1}(\yova_{\lambda})-\yosubmap_{\lambda+1}(y)). 
\]
Thus we also obtain 
$\youhavq{G}{\yosmf{l}}{q}{p}(\yosubmap_{\lambda+1}(z)-\yosubmap_{\lambda+1}(y))\le \min\{g, h\}$. 
Applying 
\ref{item:conconseq2c}
 for 
$\yosubmap_{\lambda}$
 to 
$y$ 
and 
$z$, 
we see that 
$\yocoef{\yosubmap_{\lambda+1}(z)}{g}\neq \yocoef{\yosubmap_{\lambda+1}(y)}{h}$. 
Hence, 
we have 
\[
\yocoef{\yosubmap_{\lambda+1}(\yova_{\lambda})}{g}\neq \yocoef{\yosubmap_{\lambda+1}(y)}{h}.
\] 

Therefore 
\ref{item:conconseq2c} is 
valid.

\textbf{Proof of \ref{item:acoef}}:
By \ref{item:propertyp}, 
if 
$g<m$
we have 
$\yocoef{\yosubmap_{\lambda+1}(\yova_{\lambda})}{g}
\in \yorepc_{\lambda}$. 
If 
$m<g$, 
then 
the condition 
\ref{item:p1p1}
implies that 
$\yocoef{\yosubmap_{\lambda+1}(\yova_{\lambda})}{g}=0$. 
Thus, 
by 
\ref{item:p2p2}, 
the set 
\[
\{\, \yocoef{\yosubmap_{\lambda+1}(\yova_{\lambda})}{g}\mid g\in G, \yocoef{\yosubmap_{\lambda+1}(\yova_{\lambda})}{g}\neq 0\, \}
\]
is contained in  
 $\yorepc_{\lambda}\sqcup\{\yoelm_{\lambda}\}=\yorepc_{\lambda+1}$. 
Namely, 
the condition 
\ref{item:acoef}
is true. 
\end{proof}

We next see the elementary lemma. 
\begin{lem}\label{lem:center}
Let 
$\youit$ 
be the same element 
of 
$\youhaq{G}{\yosmf{l}}{q}{p}$ 
as in 
Definition 
\ref{df:uk}. 
Take 
an 
arbitrary  sequence 
$\{a_{i}\}_{i\in \zz_{\ge 0}}$
in 
$\youhaq{G}{\yosmf{l}}{q}{p}$ 
and 
an arbitrary 
sequence 
$\{r_{i}\}_{i\in \zz_{\ge 0}}$
in 
$(0, \infty)$ 
such that
there exists 
$c>0$ 
with 
$c<r_{i}$
for all 
$i\in \zz_{\ge 0}$. 
Put 
\[
R=\bigcup_{i\in \zz_{\ge 0}}(-\infty, -\log_{\yoratio}(r_{i})).
\] 
If 
$\bigcap_{i\in \zz_{\ge 0}}B(a_{i}, r_{i}; e)\neq \emptyset$, 
then we can  take 
$\yosva\in \bigcap_{i\in \zz_{\ge 0}}B(a_{i}, r_{i}; e)$ 
such that 
\[
\yosva=\sum_{g\in G\cap R}s_{g}\youit^{g}, 
\]
where 
$s_{g}\in \yorep$. 
In this case, we have 
\[
\bigcap_{i\in \zz_{\ge 0}}B(a_{i}, r_{i}; e)=
\bigcap_{i\in \zz_{\ge 0}}B(\yosva, r_{i}; e).
\] 
\end{lem}
\begin{proof}
By the assumption, take a point
$a\in \youhaq{G}{\yosmf{l}}{q}{p}$
such that 
$a\in \bigcap_{i\in \zz_{\ge 0}}B(a_{i}, r_{i}; e)$. 
We put 
$a=\sum_{g\in G}s_{g}\youit^{g}$, 
where 
$s_{g}\in \yorep$ 
and 
define 
$\yosva$
as a slice of 
$a$ 
by 
$R$, 
i.e., 
$\yosva=\sum_{g\in G\cap R}s_{g}\youit^{g}$. 
By the definition of 
$e$, 
(or 
$\youhavq{G}{\yosmf{l}}{q}{p}$), 
we have 
$\yosva\in B(a, r_{i}; e)$
for all 
$i\in \zz_{\ge 0}$. 
Thus 
Lemma 
\ref{lem:ultraopcl} 
proves the lemma. 
\end{proof}

\begin{rmk}
Since now we only consider metrics valued in 
$\rr$, 
the set 
$R$
 is equal to 
$(-\infty, -\log_{\yoratio}(r))$, 
where 
$r=\inf_{i\in\zz_{\ge 0}}r_{i}$. 
To extend our theory to generalized metrics, 
we claim the statement in a slightly general form. 
\end{rmk}

Next we show  lemmas 
corresponding to 
steps of isolated ordinals 
in transfinite induction. 

\begin{lem}\label{lem:0dis}
Fix 
$\lambda<\yoespcard$ 
with 
$\lambda\neq 0$, 
and
let 
$\yosubmap_{\lambda}\colon \yoesp_{\lambda}\to \youhaq{G}{\yosmf{l}}{q}{p}$  
be a
well-behaved map. 
If 
$\yoedis(\yoesp_{\lambda}, \yova_{\lambda})>0$, 
then we can obtain 
a well-behaved isometric embedding 
$\yosubmap_{\lambda+1}\colon \yoesp_{\lambda+1}\to \youhaq{G}{\yosmf{l}}{q}{p}$ 
such that 
$\yosubmap_{\lambda+1}|_{\yoesp_{\lambda}}=\yosubmap_{\lambda}$. 
\end{lem}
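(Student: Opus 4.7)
The plan is to extend $\yosubmap_\lambda$ by choosing $\yosubmap_{\lambda+1}(\yova_\lambda)$ as a truncation of the image of a nearest point, augmented by a freshly introduced top-degree term. Concretely, set $u = \yoedis(\yoesp_\lambda, \yova_\lambda)$ and $m = -\log_\yoratio(u)$, pick $z \in \yoesp_\lambda$ with $\yoedis(z, \yova_\lambda) = u$, and apply Lemma~\ref{lem:center} with $a = \yosubmap_\lambda(z)$ and $r = u$ to obtain
\[
\yosva \in B(\yosubmap_\lambda(z), u; e) \quad \text{with} \quad \yocoef{\yosva}{g} = 0 \text{ for all } g \geq m.
\]
Then declare $\yosubmap_{\lambda+1}(\yova_\lambda) = \yosva + b_\lambda \youit^m$ and $\yosubmap_{\lambda+1}|_{\yoesp_\lambda} = \yosubmap_\lambda$. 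Condition \ref{item:azero} is inherited from $\yosubmap_\lambda$; properties \ref{item:p1p1} and \ref{item:p2p2} of Lemma~\ref{lem:19:propertyp} are immediate, since $\yosva$ vanishes at positions $\geq m$ while $b_\lambda \youit^m$ contributes only $b_\lambda$ at position $m$.

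To verify the isometric property, fix $y \in \yoesp_\lambda$ and decompose
\[
\yosubmap_{\lambda+1}(\yova_\lambda) - \yosubmap_\lambda(y) = (\yosva - \yosubmap_\lambda(z)) + (\yosubmap_\lambda(z) - \yosubmap_\lambda(y)) + b_\lambda \youit^m.
\]
The three summands have valuations $\geq m$, $-\log_\yoratio(d(z,y))$, and exactly $m$ respectively, using Lemma~\ref{lem:center}, the isometry \ref{item:aisom} of $\yosubmap_\lambda$, and the fact that $\yorcff(b_\lambda) \neq 0$ which is forced by \ref{item:h3}. A case split on whether $d(z,y)$ is less than, greater than, or equal to $u$, combined with the strong triangle inequality for $\yoedis$, reduces the claim $e(\yosubmap_{\lambda+1}(\yova_\lambda), \yosubmap_\lambda(y)) = \yoedis(\yova_\lambda, y)$ to a routine valuation computation in each case.

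The main obstacle is the borderline case $d(z, y) = u$, in which the second and third summands share valuation $m$ and could in principle cancel to produce strictly higher valuation. To preclude this, I exploit \ref{item:acoef} for $\yosubmap_\lambda$: every non-zero coefficient of $\yosubmap_\lambda(y)$ lies in $C_\lambda$, so $\yocoef{\yosubmap_\lambda(y)}{m} \in C_\lambda \cup \{0\}$. A direct calculation then shows that the coefficient of $\youit^m$ in the sum above equals $b_\lambda - \yocoef{\yosubmap_\lambda(y)}{m}$, which is non-zero because $b_\lambda \in \yorepc \setminus C_\lambda$; the exclusion follows from the algebraic independence of $\yorcff(\yorepc)$ over $\yosmf{k}$ in \ref{item:h3}, which forces $\yorcff(b_\lambda) \neq \yorcff(c)$ for all $c \in C_\lambda \cup \{0\}$. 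Hence the valuation of the total is exactly $m$, yielding $e(\yosubmap_{\lambda+1}(\yova_\lambda), \yosubmap_\lambda(y)) = u = \yoedis(\yova_\lambda, y)$.

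Once the isometric property is secured, Lemma~\ref{lem:19:propertyp} immediately supplies conditions \ref{item:acfinq}, \ref{item:aseqseq}, and \ref{item:acoef} for $\yosubmap_{\lambda+1}$, finishing the verification of well-behavedness. In the $p$-adic case $q \neq p$, the same strategy applies, with every coefficient computation carried out via the standard representation from Lemma~\ref{lem:repbeta} and with algebraic operations performed in the quotient field $\yopf{G}{\yosmf{l}}{p}$; the crucial non-cancellation argument at position $m$ descends to the residue field $\yosmf{l}$ and is therefore unaffected by the passage from the Hahn ring to its quotient.
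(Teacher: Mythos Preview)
Your argument covers only half the picture. You write ``pick $z \in \yoesp_\lambda$ with $\yoedis(z, \yova_\lambda) = u$'', but the infimum $u = \yoedis(\yoesp_\lambda, \yova_\lambda)$ need not be attained. For instance, if $\lambda$ is a limit ordinal and the distances $\yoedis(\yova_\alpha, \yova_\lambda)$ strictly decrease toward a positive limit $u$ as $\alpha \to \lambda$, no such $z$ exists. Worse, in that situation $m = -\log_\yoratio(u)$ may fail to lie in $G$ at all, so the term $b_\lambda \youit^{m}$ is not even defined. The paper splits into two cases precisely here: your construction matches the paper's Case~2 (infimum attained), but Case~1 (infimum not attained) requires a genuinely different idea, namely the spherical completeness of $\youhaq{G}{\yosmf{l}}{q}{p}$ (Proposition~\ref{prop:propppp}\ref{item:o1}). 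One takes a sequence $\{y_i\}$ with $\yoedis(y_i, \yova_\lambda) \searrow u$, intersects the nested balls $B(\yosubmap_\lambda(y_i), r_i; e)$, and defines $\yosubmap_{\lambda+1}(\yova_\lambda)$ to be a suitably truncated center $\yosva$ of that intersection, \emph{without} appending any $b_\lambda$-term.

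Your treatment of the attained case is essentially correct and agrees with the paper's Case~2, though your justification that $b_\lambda \notin C_\lambda$ via \ref{item:h3} is heavier than needed: the enumeration $\yorepc = \{b_\alpha \mid \alpha < \theta\}$ already lists distinct elements by \ref{item:h4}, so $b_\lambda \notin C_\lambda = \{b_\alpha \mid \alpha < \lambda\}$ is immediate.
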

\begin{proof} 
Put 
$Y_{\lambda}=\yosubmap_{\lambda}(\yoesp_{\lambda})$. 
Put  
$u=\yoedis(\yoesp_{\lambda}, \yova_{\lambda})>0$
and
 $m=-\log_{\yoratio}(u)$. 
Let 
$\youit$ 
be the same element in 
$\youhaq{G}{\yosmf{l}}{q}{p}$
as in 
Definition 
\ref{df:uk}.

\yocase{1}{There is no $a\in \yoesp_{\lambda}$ 
such that 
$\yoedis(a, \yova_{\lambda})=u$}
Take a sequence 
$\{y_{i}\}_{i\in \zz_{\ge 0}}$ 
in 
$\yoesp_{\lambda}$ 
such that 
$\yoedis(y_{i+1}, \yova_{\lambda})<\yoedis(y_{i}, \yova_{\lambda})$ 
for all 
$i\in \zz_{\ge 0}$ 
and 
$\yoedis(y_{i}, \yova_{\lambda})\to u$ 
as 
$i\to \infty$. 
Put 
$r_{i}=\yoedis(y_{i}, \yova_{\lambda})$. 
In this situation, 
we see that 
$r_{i+1}<r_{i}$ 
and 
\[
B(\yosubmap_{\lambda}(y_{i+1}), r_{i+1}; e)\yosub 
B(\yosubmap_{\lambda}(y_{i}), r_{i}; e)
\]
for all 
$i\in \zz_{\ge 0}$
(see Lemma 
\ref{lem:ultraopcl}). 
Then, 
using the spherical completeness of 
$\youhaq{G}{\yosmf{l}}{q}{p}$
(\ref{item:o1} 
in 
Proposition 
\ref{prop:propppp}), 
we obtain 
\[
\bigcap_{i\in \zz_{\ge 0}}B(\yosubmap_{\lambda}(y_{i}), r_{i}; e)\neq \emptyset. 
\]
In this case, 
the set 
$\bigcap_{i\in \zz_{\ge 0}}B(\yosubmap_{\lambda}(y_{i}), r_{i}; e)$
is 
a closed ball of radius 
$r=\inf_{i\in \zz_{\ge 0}}r_{i}$ 
centered at some point in 
$\youhaq{G}{\yosmf{l}}{q}{p}$. 
Lemma 
\ref{lem:center} implies
that there exists
 $\yosva=
\sum_{g\in G\cap (-\infty, m)}s_{g}\youit^{g}$
in 
$\youhaq{G}{\yosmf{l}}{q}{p}$
such that 
\[
B(\yosva, r; e)=\bigcap_{i\in \zz_{\ge 0}}B(\yosubmap_{\lambda}(y_{i}), r_{i}; e).
\] 
We define 
a map 
$\yosubmap_{\lambda+1}\colon \yoesp_{\lambda+1}\to \youhaq{G}{\yosmf{l}}{q}{p}$ 
by 
$\yosubmap_{\lambda+1}|_{\yoesp_{\lambda}}=\yosubmap_{\lambda}$ 
and 
$\yosubmap_{\lambda+1}(\yova_{\lambda})=\yosva$. 
This definition
implies the condition 
\ref{item:azero}. 
Next we verify 
the condition 
\ref{item:aisom}. 
It suffices to show that 
\[
\yoedis(x, \yova_{\lambda})=
e(\yosubmap_{\lambda+1}(x), \yosubmap_{\lambda+1}(\yova_{\lambda})). 
\]
Take 
$n\in \zz_{\ge 0}$ 
such that 
$\yoedis(y_{n}, \yova_{\lambda})<\yoedis(x, \yova_{\lambda})$. 
Then
Lemma 
\ref{lem:isosceles}
implies 
$\yoedis(x, \yova_{\lambda})=\yoedis(y_{n}, x)$, 
and hence 
$\yoedis(y_{n}, x)=e(\yosubmap_{\lambda+1}(y_{n}), \yosubmap_{\lambda+1}(x))$. 
By 
$\yosva\in B(\yosubmap_{\lambda+1}(y_{n}), r_{n}; e)$, 
we have 
\begin{align*}
e(\yosubmap_{\lambda+1}(\yova_{\lambda}), \yosubmap_{\lambda+1}(y_{n}))&\le 
\yoedis(\yova_{\lambda}, y_{n})
<
\yoedis(\yova_{\lambda}, x)\\
&=e(\yosubmap_{\lambda+1}(y_{n}), \yosubmap_{\lambda+1}(x)). 
\end{align*}
Thus, using 
Lemma 
\ref{lem:isosceles}
 again, 
 we have 
\[
e(\yosubmap_{\lambda+1}(y_{n}), \yosubmap_{\lambda+1}(x))=e(\yosubmap_{\lambda+1}(\yova_{\lambda}), \yosubmap_{\lambda+1}(x)),
\]
and hence
$e(\yosubmap_{\lambda+1}(\yova_{\lambda}), \yosubmap_{\lambda+1}(x))=
\yoedis(\yova_{\lambda}, x)$. 
By the construction, 
the map
$\yosubmap_{\lambda+1}$
satisfies the 
properties 
\ref{item:p1p1}--\ref{item:p2p2}. 
Thus, we see that 
$\yosubmap_{\lambda+1}$
satisfies the conditions 
\ref{item:aseqseq}--\ref{item:acoef}.

\yocase{2}{There exists  
$a\in \yoesp_{\lambda}$ 
such that 
$\yoedis(a, \yova_{\lambda})=u$}
Applying  Lemma
 \ref{lem:center}
to a sequence 
$\{r_{i}\}_{i\in \zz_{\ge 0}}$ 
with  
$r_{i}=u$, 
there exists 
\[
\yosva=\sum_{g\in G\cap (-\infty, m)}s_{g}\youit^{g}\in \youhaq{G}{\yosmf{l}}{q}{p}
\]
such that 
$B(\yosva, u; e)=B(\yosubmap_{\lambda}(a), u; e)$.  
We put 
\[
\yosvaq=\yoelm_{\lambda}\cdot \youit^{m}+\yosva=\yoelm_{\lambda}\cdot \youit^{m}+
\sum_{g\in G\cap (-\infty, m)}s_{g}\youit^{g}. 
\]
Then 
Lemma 
\ref{lem:ultraopcl}
shows that
$B(\yosvaq, u; e)=B(\yosubmap_{\lambda}(a), u; e)$.
We define 
a map 
$\yosubmap_{\lambda+1}\colon \yoesp_{\lambda+1}\to \youhaq{G}{\yosmf{l}}{q}{p}$
by 
$\yosubmap_{\lambda+1}|_{\yoesp_{\lambda}}=\yosubmap_{\lambda}$ 
and 
$\yosubmap_{\lambda+1}(\yova_{\lambda})=\yosvaq$. 
To verify  
the condition 
\ref{item:aisom}, 
we  show that 
$e(\yosubmap_{\lambda+1}(\yova_{\lambda}), \yosubmap_{\lambda+1}(x))=\yoedis(\yova_{\lambda}, x)$
for all 
$x\in \yoesp_{\lambda}$. 
If 
$x\not \in B(a, u; \yoedis)$, 
then 
we have 
$\yoedis(x, a)=\yoedis(x, \yova_{\lambda})$. 
Similarly, 
we have 
\[
e(\yosubmap_{\lambda+1}(x), \yosubmap_{\lambda+1}(a))=
e(\yosubmap_{\lambda+1}(x), \yosubmap_{\lambda+1}(\yova_{\lambda})). 
\]
Since 
\[
e(\yosubmap_{\lambda}(x), \yosubmap_{\lambda}(a))=
e(\yosubmap_{\lambda}(x), \yosubmap_{\lambda}(a))
=\yoedis(x, a), 
\]
we have 
\[
e(\yosubmap_{\lambda+1}(x), \yosubmap_{\lambda+1}(\yova_{\lambda}))=\yoedis(x, \yova_{\lambda}).
\] 
If 
$x\in B(a, u; \yoedis)$, 
then 
$\yoedis(x, \yova_{\lambda})\le 
\yoedis(x, a)\lor \yoedis(a, \yova_{\lambda})\le u$. 
By the definition of 
$u$, 
we have 
$\yoedis(\yova_{\lambda}, x)=u$. 
We also see that 
$\yosubmap_{\lambda+1}(x)\in B(\yosubmap_{\lambda+1}(a), u; e)$. 
Then we can represent 
\[
\yosubmap_{\lambda+1}(x)=
\yosva+
\sum_{g\in G\cap 
[m, \infty)}c_{g}\youit^{g}, 
\]
where 
$c_{g}\in \yorepc_{\lambda}\cup \{0\}$. 
Take 
$\alpha<\lambda$ 
with 
$\yova_{\alpha}=x$. 
Then 
the condition 
\ref{item:acoef} 
for 
$\yosubmap_{\lambda}$ 
implies that, 
for every 
$g\in G\cap [m,\infty)$, 
if 
$c_{g}\neq 0$, 
then 
$c_{g}\in \yorepc_{\alpha+1}$. 
Since 
$\yoelm_{\lambda}\not\in \yorepc_{\alpha+1}$
and 
$\yoelm_{\lambda}\neq 0$, 
we have 
$c_{m}\neq \yoelm_{\lambda}$.
From the definition of 
$\youhavq{G}{\yosmf{l}}{q}{p}$
and 
$\yoelm_{\lambda}\not\in \yorepc_{\alpha+1}$, 
it follows that
$e(\yosubmap_{\lambda+1}(x), \yosubmap_{\lambda+1}(\yova_{\lambda}))
=u$. 
Hence 
\[
e(\yosubmap_{\lambda+1}(x), \yosubmap_{\lambda+1}(\yova_{\lambda}))=
\yoedis(x, \yova_{\lambda}).
\] 
Similarly to 
Case 1, 
by the construction, 
we see that 
$\yosubmap_{\lambda+1}$
satisfies 
the 
properties 
\ref{item:p1p1}--\ref{item:p2p2}. 
Thus, we see that 
$\yosubmap_{\lambda+1}$
satisfies the conditions 
\ref{item:aseqseq}--\ref{item:acoef}. 
\end{proof}

\begin{lem}\label{lem:podis}
Fix 
$\lambda<\yoespcard$
with 
$\lambda\neq 0$, 
and
let 
$\yosubmap_{\lambda}\colon 
\yoesp_{\lambda}\to \youhaq{G}{\yosmf{l}}{q}{p}$  
be a
well-behaved map. 
If 
$\yoedis(\yoesp_{\lambda}, \yova_{\lambda})=0$, 
then 
there exists a well-behaved map 
$\yosubmap_{\lambda+1}\colon 
\yoesp_{\lambda+1}\to \youhaq{G}{\yosmf{l}}{q}{p}$ 
such that 
$\yosubmap_{\lambda+1}|_{\yoesp_{\lambda}}=\yosubmap_{\lambda}$. 
\end{lem}
\begin{proof}
We now define 
$\yosubmap_{\lambda+1}(\yova_{\lambda})$ 
as follows. 
Take 
a sequence 
$\{x_{i}\}$ in  $\yoesp_{\lambda}$
 such that 
$x_{i}\to \yova_{\lambda}$
as 
$i\to \infty$, 
and define  
$\yosubmap_{\lambda+1}(\yova_{\lambda})=\lim_{i\to \infty}\yosubmap_{\lambda}(x_{i})$. 
The value 
$\yosubmap_{\lambda+1}(\yova_{\lambda})$ 
is independent of 
 the choice of 
a sequence
 $\{x_{i}\}_{i\in \zz_{\ge 0}}$
 (or see 
\cite[Theorem 2]{MR0390999} and \cite{MR0969516}).

First we confirm that 
$\yosubmap_{\lambda+1}$ 
is 
well-behaved. 
Since 
$\yosubmap_{\lambda}$
satisfies 
the condition
\ref{item:azero}, 
so does 
$\yosubmap_{\lambda+1}$.

To prove \ref{item:aisom}, 
it is enough to 
show 
$\yoedis(x, \yova_{\lambda})=e(\yosubmap_{\lambda+1}(x), \yosubmap_{\lambda+1}(\yova_{\lambda}))$
for all 
$x\in \yoesp_{\lambda}$. 
Take  a sufficiently large 
$n\in \zz_{\ge 0}$
so that 
\[
\yoedis(x_{n}, \yova_{\lambda})<\yoedis(x, \yova_{\lambda})
\]
and 
\[
e(\yosubmap_{\lambda+1}(x_{n}), \yosubmap_{\lambda+1}(\yova_{\lambda}))<
e(\yosubmap_{\lambda+1}(x), 
\yosubmap_{\lambda+1}(\yova_{\lambda})). 
\]
Lemma 
\ref{lem:isosceles}
implies 
that 
\[
\yoedis(x, \yova_{\lambda})=\yoedis(x_{n}, x)
\]
and 
\[
e(\yosubmap_{\lambda+1}(x), 
\yosubmap_{\lambda+1}(\yova_{\lambda}))
=
e(\yosubmap_{\lambda+1}(x_{n}), 
\yosubmap_{\lambda+1}(x)).
\] 
Since 
$\yosubmap_{\lambda}$
 is an isometry and 
$\yosubmap_{\lambda+1}|_{\yoesp_{\lambda}}=\yosubmap_{\lambda}$, 
we have 
\[
\yoedis(x_{n}, x)=e(\yosubmap_{\lambda+1}(x_{n}), 
\yosubmap_{\lambda+1}(x)).
\] 
Therefore we 
conclude that 
the condition 
\ref{item:aisom}
is true.

By the construction, 
for every finite subset 
$F\yosub G$, 
there exists 
$i\in \zz_{\ge 0}$ 
such that 
\[
\yocoef{\yosubmap_{\lambda+1}(\yova_{\lambda})}{g}
=
\yocoef{\yosubmap_{\lambda}(x_{i})}{g}
\]
for every 
$g\in F$. 
In the notation of Lemma 
\ref{lem:19:propertyp}, 
we have 
\[
u=\yoedis(\yoesp_{\lambda}, \yova_{\lambda})=0
\]
and hence 
$m=\infty$ 
by our convention. 
Hence 
$\yosubmap_{\lambda+1}$
satisfies the properties 
\ref{item:p1p1}--\ref{item:p2p2}. 
Therefore, applying Lemma 
\ref{lem:19:propertyp},  we see that 
$\yosubmap_{\lambda+1}$
satisfies the conditions 
\ref{item:aseqseq}--\ref{item:acoef}. 
\end{proof}

Let us prove Theorem \ref{thm:main0}. 
\begin{proof}[Proof of Theorem \ref{thm:main0}]
The case 
$X=\emptyset$ 
is immediate because the empty map satisfies all the required conditions. 
Thus we assume that 
$X\neq \emptyset$.
The proof is based on  
\cite{MR748978}.
Using transfinite recursion, 
we first construct 
a coherent family 
$\{\yosubmap_{\mu}\colon \yoesp_{\mu}\to \youhaq{G}{\yosmf{l}}{q}{p}\}_{\mu\le \yoespcard}$
of 
 well-behaved maps. 
Fix 
$\mu\le \yoespcard$
and assume that we have already obtained 
a coherent family 
$\{\yosubmap_{\alpha}\}_{\alpha<\mu}$
of 
well-behaved maps. 
Now we construct 
$\yosubmap_{\mu}$ 
as follows. 
If 
$\mu=0$, 
then 
we define 
$\yosubmap_{0}$
as
the empty map. 
If 
$\mu=1$, 
then we have 
$\yoesp_{1}=\{\yova_{0}\}$ 
and 
we define 
$\yosubmap_{1}\colon \yoesp_{1}\to \youhaq{G}{\yosmf{l}}{q}{p}$
by 
$\yosubmap_{1}(\yova_{0})=0$. 
If 
$\mu=\lambda+1$ 
for some 
$\lambda<\mu$ 
with 
$\lambda\neq 0$, 
then 
using Lemmas 
\ref{lem:0dis} 
and 
\ref{lem:podis}, 
we  obtain a well-behaved map
$\yosubmap_{\lambda+1}\colon \yoesp_{\lambda+1}\to \youhaq{G}{\yosmf{l}}{q}{p}$
such that 
$\yosubmap_{\lambda+1}|_{\yoesp_{\lambda}}=\yosubmap_{\lambda}$. 
If 
$\mu$ 
is a limit ordinal, 
then we define 
$\yosubmap_{\mu}\colon \yoesp_{\mu}\to \youhaq{G}{\yosmf{l}}{q}{p}$
by 
$\yosubmap_{\mu}(x)=\yosubmap_{\alpha}(x)$, 
where 
$x\in \yoesp_{\alpha}$. 
In this case,  
$\yosubmap_{\mu}$ 
is  well-defined 
since the family 
$\{\yosubmap_{\alpha}\}_{\alpha<\mu}$ 
is coherent. 
Therefore, 
according to transfinite recursion, 
we obtain a well-behaved isometric 
embedding 
$\yosubmap_{\yoespcard}\colon \yoesp_{\yoespcard}\to \youhaq{G}{\yosmf{l}}{q}{p}$. 

In this case, note that 
$\yoesp_{\yoespcard}=\yoesp$. 
Now we define 
$\yomainmap \colon  X\to \youhaq{G}{\yosmf{l}}{q}{p}$ 
by 
$\yomainmap=\yosubmap_{\yoespcard}|_{X}$
(recall that 
$\yoesp=X\sqcup \{\yoept\}$). 
Due to 
\ref{item:azero},
we have 
$\yosubmap_{\yoespcard}(\yoept)=0$
and 
$\yoept\not \in X$. 
Thus 
the condition 
\ref{item:connzero} 
is true. 
Since 
$\yosubmap_{\yoespcard}$
satisfies 
\ref{item:aisom}--\ref{item:acoef}, 
the map 
$\yomainmap$ 
satisfies the 
conditions 
\ref{item:conisom}--\ref{item:concoef}. 
This finishes the proof of 
Theorem 
\ref{thm:main0}. 
\end{proof}


\subsubsection{The proof of the first   main result}

The following is 
our first main result.

\begin{thm}\label{thm:19main2}
Let 
$\yoratio\in (1, \infty)$, 
$(q, p)\in \yoqpset$, 
$\yorepcard$
 be a   cardinal, 
and 
$G\in \yogset$ 
be 
divisible. 
Put 
$R=\{\, \yoratio^{-g}\mid g\in G\sqcup\{\infty\}\,\}$. 
Take
an  arbitrary 
  valued field
  $(K, v)$  
of characteristic 
$q$
  such that 
$v(K)\yosub G\sqcup\{\infty\}$
and 
$\yorcf{K}{v}$
has characteristic 
$p$. 
Assume moreover that, if 
$q\neq p$, 
then 
$v(p)=1$.
Then there exists a  valued field 
$(L, w)$ 
such that: 
\begin{enumerate}[label=\textup{(L\arabic*)}]

\item\label{item:l1}
the field 
$(L, w)$ 
is a valued field   extension of 
$(K, v)$;

\item\label{item:l2}
$w(L)= G\sqcup\{\infty\}$;

\item\label{item:l3}
for each 
$R$-valued
 ultrametric 
$(X, d)$ 
with either 
$X=\emptyset$ 
or 
$\card(X\sqcup\{\ast\})\le \yorepcard$ 
for some 
$\ast\notin X$,
there exists 
an isometric 
embedding  
$\yomainmap\colon (X, d)\to (L, \yonabs{w}{\yoratio}{*})$
such that 
the set 
$\yomainmap(X)$ 
is algebraically  independent over 
$K$. 
\end{enumerate}
Moreover, 
for every 
$R$-valued 
ultrametric space
$(X, d)$, 
there exist a valued field 
$(F, u)$ 
and a map 
$\yomainmap\colon X\to F$
such that: 
\begin{enumerate}[label=\textup{(F\arabic*)}]

\item\label{item:f1}
the map 
$\yomainmap\colon (X, d)\to (F, \yonabs{u}{\yoratio}{*})$ is an isometric embedding;

\item\label{item:f2}
the field 
$(F,  u)$ 
is a valued field   extension of 
$(K, v)$;

\item\label{item:f3}
$u(F)\yosub G\sqcup \{\infty\}$;

\item\label{item:f4}
the set
$\yomainmap(X)$ 
is closed in 
$F$;

\item\label{item:f5}
the set 
$\yomainmap(X)$
 is algebraically independent 
over 
$K$; 

\item\label{item:f6}
if 
$(X, d)$
is complete, 
then 
$(F, u)$
can be chosen 
 to be complete.

\end{enumerate}
\end{thm}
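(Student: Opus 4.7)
The plan is to deduce \ref{item:l1}--\ref{item:l3} by combining Proposition \ref{prop:pembed} with Theorem \ref{thm:main0} and Proposition \ref{prop:fintr}, and then to obtain the moreover part by passing to a carefully chosen valued subfield of the ambient Hahn field.

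For the first part, I would take $\yosmf{k}$ to be an algebraic closure of $\yorcf{K}{v}$ and $\yosmf{l}$ to be a perfect algebraically closed field extending $\yosmf{k}$ that contains a set $T$ of cardinality $\theta$ algebraically independent over $\yosmf{k}$. Composing the embedding from Proposition \ref{prop:pembed} (into $\youhaq{G}{\yosmf{k}}{q}{p}$) with the embedding from Proposition \ref{prop:embedcoef}, I obtain an embedding of $(K, v)$ as a valued subfield of $(L, w) = (\youhaq{G}{\yosmf{l}}{q}{p}, \youhavq{G}{\yosmf{l}}{q}{p})$; condition \ref{item:l1} is automatic and \ref{item:l2} follows from item \ref{item:gg1} of Proposition \ref{prop:propppp}. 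Since under this composite embedding all coefficients of elements of $K$ lie in $\yosmf{k}$, we have $\yoadhf{K} \yosub \yosmf{k}$. Fix a complete system $\yorep$ of representatives of $\yosmf{l}$ and a subset $\yorepc \yosub \yorep$ of cardinality $\theta$ with $\yorcff(\yorepc) = T$; then $\yorcff(\yorepc)$ is algebraically independent over $\yoadhf{K}$. For each $R$-valued $(X, d)$ with $\card(X) \le \theta$, Theorem \ref{thm:main0} produces $I \colon X \to L$ satisfying \ref{item:connzero}--\ref{item:concoef}, and these conditions verify the hypotheses \ref{item:condp}, \ref{item:condpttt}, \ref{item:condalg} of Proposition \ref{prop:fintr}; the latter then gives \ref{item:l3}.

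For the moreover part, apply the first part with $\theta = \max(\card(X), \aleph_0)$ to obtain an isometric embedding $I \colon X \to L$ with $I(X)$ algebraically independent over $K$, and take $F$ to be the subfield of $L$ generated by $K \cup I(X)$, equipped with $u = w|_F$. Items \ref{item:f1}, \ref{item:f2}, \ref{item:f3}, \ref{item:f5} are then immediate. When $(X, d)$ is complete, replace $F$ by the topological closure of $K(I(X))$ inside the spherically complete field $L$ (item \ref{item:o1} of Proposition \ref{prop:propppp}); the resulting $F$ is complete, and $I(X)$, being isometric to the complete space $X$, is itself complete, hence closed in $F$, delivering both \ref{item:f4} and \ref{item:f6}.

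The technical heart of the argument is \ref{item:f4} when $(X, d)$ is not complete, since $F$ must contain $I(X)$ but no limits in $L$ of Cauchy sequences in $I(X)$ that fail to converge in $X$. The plan here is to exploit \ref{item:conconseq} and \ref{item:concoef}: any element of $F = K(I(X))$ is a $K$-rational function in some finite set $I(x_1), \dots, I(x_m)$ whose non-zero coefficients involve only a finite subset $\{b_{\alpha_1}, \dots, b_{\alpha_m}\}$ of $\yorepc$, whereas any non-eventually-constant Cauchy sequence $\{I(y_n)\}$ in $I(X)$ must, by \ref{item:concoef}, display coefficients at unboundedly small valuations involving further $b_\beta \in \yorepc$ disjoint from $\{b_{\alpha_i}\}$. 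Comparing coefficients at the critical powers of $\youit$ in the difference between a putative limit in $F$ and the tail $I(y_n)$, and invoking the algebraic independence of $\yorcff(\yorepc)$ over $\yoadhf{K}$, should yield a contradiction unless the sequence is eventually constant, in which case the limit lies in $I(X)$. Making this coefficient-tracking argument uniform in the rational function over $K$ is the main subtlety.
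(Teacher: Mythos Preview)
Your argument for \ref{item:l1}--\ref{item:l3} is correct and essentially identical to the paper's: embed $K$ into $\youhaq{G}{\yosmf{k}}{q}{p}$ via Proposition~\ref{prop:pembed}, pass to $\youhaq{G}{\yosmf{l}}{q}{p}$ via Proposition~\ref{prop:embedcoef}, and combine Theorem~\ref{thm:main0} with Proposition~\ref{prop:fintr}.

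For the moreover part, however, the paper bypasses your coefficient-tracking entirely with a much simpler device. Rather than embedding $X$, the paper embeds the \emph{completion} $(Y,e)$ of $(X,d)$: applying the first part with $\theta \ge \card(Y)$ yields $\yomainmap\colon Y \to L$ with $\yomainmap(Y)$ algebraically independent over $K$. One then sets $F = K(\yomainmap(X))$. Since $Y$ is complete and $\yomainmap$ is isometric, $\yomainmap(Y)$ is closed in $L$; and since $\yomainmap(Y)$ is algebraically independent over $K$, any $\yomainmap(y)$ with $y \in Y \setminus X$ is transcendental over $K(\yomainmap(X)) = F$, so $\yomainmap(Y) \cap F = \yomainmap(X)$. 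Thus $\yomainmap(X)$ is the intersection of a closed set with $F$, hence closed in $F$. This delivers \ref{item:f4} with no further work.

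Your coefficient-tracking sketch, by contrast, has a genuine gap. You assert that an element of $K(\yomainmap(x_1),\dots,\yomainmap(x_m))$ has non-zero coefficients involving only a \emph{finite} subset $\{b_{\alpha_1},\dots,b_{\alpha_m}\}$ of $\yorepc$, but condition \ref{item:concoef} only says that the coefficients of $\yomainmap(\yova_{\alpha})$ lie in $C_{\alpha+1}$, which is infinite whenever $\alpha$ is an infinite ordinal. A single $\yomainmap(x_i)$ may therefore display infinitely many distinct elements of $\yorepc$ in its expansion, so there is no obvious finite set of transcendentals separating a putative limit from $F$. Even if one tracks instead the subfield of $\yosmf{l}$ generated by all coefficients of the $\yomainmap(x_i)$, it is not clear how to rule out that the limit of a divergent Cauchy sequence in $\yomainmap(X)$ has all its coefficients in that same subfield. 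The completion trick sidesteps this difficulty completely and is what you should use.
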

\begin{proof}
Let 
$\yosmf{k}$ 
be 
the  algebraic 
closure   of 
$\yorcf{K}{v}$.
Notice that 
$\yosmf{k}$
 is perfect. 
Take a perfect field 
$\yosmf{l}$ 
such that the transcendental degree of 
$\yosmf{l}$ 
over 
$\yosmf{k}$
is 
$\yorepcard$, 
and take a 
transcendental basis 
$B$ of
$\yosmf{l}$
over 
$\yosmf{k}$. 
In this case, 
we have 
$\card(B)=\yorepcard$. 
Let
$\yorep\yosub 
 \youhaq{G}{\yosmf{l}}{q}{p}$ 
be  the complete system of representatives defined in Definition 
\ref{df:tau}
of 
$\yosmf{l}$. 
Notice that 
$B\yosub \yorcff(\yorep)$. 
Put
$\yorepc=(\yorcff|_{\yorep})^{-1}(B)$. 
By Proposition \ref{prop:pembed}, 
we can take a homomorphic embedding 
\[
\psi\colon K\to \youhaq{G}{\yosmf{k}}{q}{p}
\]
that is  
a valued field
 embedding 
from 
$(K, v)$ 
into 
$(\youhaq{G}{\yosmf{k}}{q}{p}, \youhavq{G}{\yosmf{k}}{q}{p})$.
Let 
$\phi\colon \yosmf{k}\to \yosmf{l}$ 
be
the inclusion map. 
Let 
$\iota=\id_G\colon G\to G$.
Using 
a homomorphic embedding 
$\youhaq{\iota}{\phi}{q}{p}\circ \psi\colon K\to  \youhaq{G}{\yosmf{l}}{q}{p}$, 
in what follows, 
we consider that 
$K$ 
is a 
subfield 
 of 
$\youhaq{G}{\yosmf{l}}{q}{p}$. 
In this case, 
we see that 
$\yoadhf{K}\yosub \yosmf{k}$. 
Put 
$L=\youhaq{G}{\yosmf{l}}{q}{p}$
and 
$w=\youhavq{G}{\yosmf{l}}{q}{p}$. 
Then 
$(L, w)$
satisfies 
the conditions 
\ref{item:l1}
and 
\ref{item:l2}
(see Proposition \ref{prop:propppp}). 

Now we prove 
\ref{item:l3}. 
Take an 
$R$-valued 
ultrametric space
$(X, d)$
 satisfying the cardinality assumption in
  \ref{item:l3}. 
Applying 
Theorem 
\ref{thm:main0}
to 
$\yorepc$, 
$\yorep$, 
$\yosmf{l}$,
$\yosmf{k}$,
and 
$(X, d)$
with either 
$X=\emptyset$ 
or 
$\card(X\sqcup\{\ast\})\le \yorepcard$ 
for some 
$\ast\notin X$,
we can 
take 
$\yomainmap\colon X\to L$
satisfying 
the conditions 
\ref{item:connzero}--\ref{item:concoef}. 
The condition 
\ref{item:conisom}
shows 
that 
$\yomainmap$
is isometric. 
Due to 
the
conditions 
\ref{item:connzero}, 
\ref{item:conconseq2},
and 
\ref{item:conconseq},
and due to 
the fact that
$\yoadhf{K}\yosub \yosmf{k}$,   
the set 
\[
\yomainmap(X)=\{\, \yomainmap(x)\mid x\in X\, \}
\]
satisfies the 
assumptions in Proposition  
\ref{prop:fintr}.  
Then,
according to 
Proposition  
\ref{prop:fintr}, 
we see that 
$\yomainmap(X)$ 
is algebraically independent 
 over 
$K$. 
This means that 
the condition 
\ref{item:l3} is true.

We next prove the latter part. 
Let 
$(Y, e)$ 
be the completion of 
$(X, d)$. 
Then 
$(Y, e)$  is 
$R$-valued 
(see \cite[(12) in Theorem 1.6]{MR3782290}). 
Applying the former part of the theorem with 
$\yorepcard=\card(Y\sqcup\{\ast\})$, 
where
$\ast\notin Y$,
we obtain 
a valued field 
$(L, w)$
and 
 an isometric embedding 
$\yomainmap\colon Y\to L$
such that 
$\yomainmap(Y)$ 
is algebraically independent over 
$K$. 
Since 
$(Y, e)$ 
is complete
and 
$\yomainmap$ 
is isometric, 
the set $\yomainmap(Y)$
is closed in 
$L$. 
Let 
$F$ 
be the subfield of 
$L$
generated by 
$\yomainmap(X)=\{\,\yomainmap(x) \mid x\in X\, \}$
over 
$K$ 
and 
put 
$u=w|_{F}$.
Since 
$\yomainmap(Y)$
 is algebraically independent
over 
$K$, 
we have  
\[
\yomainmap(Y)\cap F=\yomainmap(X). 
\]
Thus 
$\yomainmap(X)$ is 
closed in 
$F$. 

To verify 
the condition 
\ref{item:f6}, 
we assume that 
$(X, d)$
is complete. 
The field 
$F$
constructed above is 
not necessarily complete. 
In this case, 
we replace 
$F$
with 
its 
completion
as a valued field. 
Since 
$(X, d)$ 
is complete, 
$\yomainmap(X)$
 is complete and hence closed in the completion of 
 $F$. 
 Algebraic independence is unchanged because any polynomial relation over 
 $K$ 
 among points of 
 $\yomainmap(X)$
  already lies in 
 $F$.
Then 
\ref{item:f6} is satisfied. 
This finishes the proof of 
Theorem 
\ref{thm:19main2}. 
\end{proof}

Letting 
$K=\yowitt{\yosmf{k}}$, 
and using 
Proposition 
\ref{prop:embedcoef}, 
we 
obtain the next corollary. 
\begin{cor}\label{cor:19main11}
Let 
$\yoratio\in (1, \infty)$, 
$(q, p)\in \yoqpset$
with 
$q\neq p$, 
and let
 $\yosmf{k}$ 
 be a perfect field of characteristic
  $p$,
$\yorepcard$ 
be a cardinal, 
and 
$G\in \yogset$. 
Put 
$R=\{\, \yoratio^{-g}\mid g\in G\sqcup\{\infty\}\,\}$. 
Then there exists a  valued field 
$(L, v)$ 
such that: 
\begin{enumerate}[label=\textup{(\arabic*)}]

\item 
the field 
$(L, v)$ 
is a
valued field   extension of 
$(\yowitt{\yosmf{k}}, \yowiv{\yosmf{k}})$;

\item 
the absolute value 
$\yonabs{v}{\yoratio}{*}$ 
is 
$R$-valued;

\item 
for each 
$R$-valued 
ultrametric 
$(X, d)$ 
with either 
$X=\emptyset$ 
or 
$\card(X\sqcup\{\ast\})\le \yorepcard$ 
for some 
$\ast\notin X$,
there exists 
an isometric
embedding 
$\yomainmap\colon (X, d)\to (L, \yonabs{v}{\yoratio}{*})$
such that 
the set 
$\yomainmap(X)$ 
is algebraically independent over 
$\yowitt{\yosmf{k}}$. 
\end{enumerate}

Moreover, 
for every 
$R$-valued 
ultrametric space
$(X, d)$, 
there exist a valued field 
$(F, u)$ 
and a map 
$\yomainmap\colon X\to F$
such that 
\begin{enumerate}[label=\textup{(\arabic*)}]

\item 
the map 
$\yomainmap\colon (X, d)\to (F, \yonabs{u}{\yoratio}{*})$ is an isometric embedding;

\item 
the field 
$(F, u)$ 
is a valued field   extension of 
$(\yowitt{\yosmf{k}}, \yowiv{\yosmf{k}})$;

\item 
$u(F)\yosub G\sqcup \{\infty\}$;

\item 
$\yomainmap(X)$ 
is closed in 
$F$;

\item 
$\yomainmap(X)$ 
is algebraically independent 
over 
$\yowitt{\yosmf{k}}$;

\item 
if 
$(X, d)$
is complete, 
then 
$F$
can be chosen to be 
complete. 

\end{enumerate}
\end{cor}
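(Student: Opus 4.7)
The plan is to derive this corollary as a direct specialization of Theorem \ref{thm:19main2}, by taking the base valued field to be $(K, v) = (\yowitt{\yosmf{k}}, \yowiv{\yosmf{k}})$ for any perfect field $\yosmf{k}$ of characteristic $p$. The hypothesis $q \neq p$ together with the definition of $\yoqpset$ via \ref{item:condq2} forces $q = 0$ and $p > 0$, which is precisely the regime in which the Witt vector ring is available.

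First I would verify the hypotheses of Theorem \ref{thm:19main2} for this choice of $(K, v)$. Proposition \ref{prop:witt} asserts that $\yowitto{\yosmf{k}}$ has characteristic $0$, residue class field $\yosmf{k}$, and value group $\zz_{\ge 0}$; passing to the fractional field, $(\yowitt{\yosmf{k}}, \yowiv{\yosmf{k}})$ is a valued field of characteristic $q = 0$ whose residue class field has characteristic $p$ and whose value group is $\zz$. Since every $G \in \yogset$ contains $\zz$ by convention, the required containment $v(K) \yosub G \sqcup \{\infty\}$ holds automatically. Thus the hypotheses of Theorem \ref{thm:19main2} are satisfied with this choice of $(K, v)$ and arbitrary cardinal $\theta$ and divisible $G$.

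Applying Theorem \ref{thm:19main2} then yields a valued field $(L, v)$ fulfilling conditions \ref{item:l1}--\ref{item:l3}, which translate directly into the three numbered conditions of the corollary; in particular, the statement that $\yonabs{v}{\yoratio}{*}$ is $R$-valued is equivalent to $v(L) = G \sqcup \{\infty\}$ by the definition of $R$. The role of Proposition \ref{prop:embedcoef}, specifically \ref{item:zp1}, is to ensure that the Hahn-style field produced inside the proof of Theorem \ref{thm:19main2} literally contains $\yowitt{\yosmf{k}}$ as a valued subfield, rather than merely matching it up to isomorphism at the level of residue fields; this is what makes the clause ``valued field extension of $(\yowitt{\yosmf{k}}, \yowiv{\yosmf{k}})$'' in conclusion (1) meaningful rather than just a reformulation of the residue field data.

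The moreover clause of the corollary follows in the same way from the moreover clause of Theorem \ref{thm:19main2}: given an $R$-valued ultrametric space $(X, d)$, one passes to its completion, invokes the former part to embed isometrically into $L$, and takes $F$ to be the valued subfield of $L$ generated by the image over $\yowitt{\yosmf{k}}$; closedness of $\yomainmap(X)$ in $F$ and completeness of $F$ when $(X, d)$ is complete are inherited from the corresponding statements for $L$ using the algebraic independence of the image. I do not anticipate any substantive obstacle, as the corollary is essentially a bookkeeping specialization; the only point that warrants care is the invocation of Proposition \ref{prop:embedcoef} to identify $\yowitt{\yosmf{k}}$ inside the constructed $L$, which is precisely what makes the specialization meaningful rather than tautological.
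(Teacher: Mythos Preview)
Your approach is correct and matches the paper's in spirit: both specialize Theorem \ref{thm:19main2} to the base field $K=\yowitt{\yosmf{k}}$. The one genuine difference is the role assigned to Proposition \ref{prop:embedcoef}. You invoke it as a supplement to a black-box application of Theorem \ref{thm:19main2}, to justify that $\yowitt{\yosmf{k}}$ sits literally inside the constructed $L$. The paper instead re-runs the proof of Theorem \ref{thm:19main2} and uses Proposition \ref{prop:embedcoef} to \emph{replace} the appeal to Proposition \ref{prop:pembed}; since Proposition \ref{prop:pembed} is the only place in that proof where divisibility of $G$ (and algebraic closedness of the residue field) is used, the paper's version shows that the divisibility hypothesis is in fact unnecessary here---a remark it makes explicitly, even though the corollary's statement retains the hypothesis. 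Your black-box route is entirely valid for the corollary as stated; you simply do not recover this small strengthening.
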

\begin{proof}
The proof is the same  as
that of  Theorem 
\ref{thm:19main2}. 
Note that, since 
$\yowitt{\yosmf{k}}$ 
is naturally a subset of 
$\youhaq{G}{\yosmf{k}}{0}{p}$
(see Proposition 
\ref{prop:embedcoef}), 
we do not need to use Proposition 
\ref{prop:pembed}. 
Thus, 
the assumption of 
the corollary 
does not 
require that 
$G$ 
be divisible. 
\end{proof}

\subsection{Broughan's conjecture}
Next we give  
an affirmative solution of 
Conjecture 
\ref{conj:bbb}. 
We begin with the definition of 
non-Archimedean Banach algebras. 
Let 
$(K, \lvert * \rvert)$ 
be 
a field equipped with a non-Archimedean 
absolute value, 
and 
$(B, \lVert *\rVert)$ 
be a 
normed linear space 
over a field 
$K$.
We say that 
$(B, \lVert *\rVert)$ 
is 
a
\emph{non-Archimedean Banach algebra over 
$K$}
(see 
\cite{MR0512894})
if the following conditions are fulfilled:
\begin{enumerate}

\item 
$B$
 is 
a 
 ring (not necessarily commutative and    unitary);

\item 
$(B, \lVert *\rVert)$ 
is a normed linear space over 
$(K, \lvert *\rvert)$; 
\item 
for every pair 
$x, y\in B$, 
we have 
$\lVert x+y \rVert \le \lVert x\rVert \lor \lVert y\rVert$
and 
$\lVert xy \rVert\le \lVert x\rVert \cdot \lVert y\rVert$; 
\item 
$B$ 
is complete with respect to 
$\lVert *\rVert$;

\item 
if 
$B$ 
has a unit 
$e$, 
then 
$\lVert e\rVert=1$.

\end{enumerate}
Some authors 
assume commutativity 
and the existence of a unit in 
the 
definition of  a Banach algebra 
(see, for example, 
\cite{MR0261361}
and 
\cite{MR0361697}). 
For instance, 
a valued field extension 
$(L, \lvert *\rvert_{L})$ 
of 
$(K, \lvert * \rvert_{K})$ 
is 
a (unitary) Banach algebra over 
$(K, \lvert * \rvert_{K})$.

As an application of Corollary 
\ref{cor:19main11}, 
we obtain the next theorem.

\begin{thm}\label{thm:frw}
Let 
$p$ 
be a prime, 
$\yorepcard$
 be a cardinal, 
and 
$\yosmf{k}$ 
be a perfect field 
of  
characteristic 
$p$
such that
the  transcendental
 degree of 
$\yosmf{k}$
over 
$\yogf{p}$ 
is equal to 
$\yorepcard$. 
Then every 
$H_{p}$-valued 
ultrametric 
 $(X, d)$ 
 with either 
$X=\emptyset$ 
or 
$\card(X\sqcup\{\ast\})\le \yorepcard$ 
for some 
$\ast\notin X$,
 can be isometrically embedded 
 into  
 $(\yowitt{\yosmf{k}}, \yowiv{\yosmf{k}})$. 
 In particular, 
 Conjecture \ref{conj:bbb}
  is true. 
\end{thm}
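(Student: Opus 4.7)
The plan is to reduce directly to Theorem \ref{thm:main0} with the distinguished choices $G = \zz$, $\yoratio = p$, and $(q, p)=(0, p)\in \yoqpset$. Under these choices the range set $\{0\}\cup \{\, p^{-g}\mid g\in \zz\, \}$ of the theorem coincides with $H_{p}$, so every $H_{p}$-valued ultrametric space is an $R$-valued ultrametric space in the sense of Theorem \ref{thm:main0}. The target field of the embedding produced by that theorem is $\youhaq{\zz}{\yosmf{k}}{0}{p}=\yopf{\zz}{\yosmf{k}}{p}$, and the essential identification I would use is that this valued field coincides with $\yowitt{\yosmf{k}}$. This generalizes the observation recorded in the paper for $\yosmf{k}=\yogf{p}$, and follows from the uniqueness assertion in Proposition \ref{prop:witt}: by Proposition \ref{prop:propppp}, the valuation ring of $(\yopf{\zz}{\yosmf{k}}{p}, \yopfv{\zz}{\yosmf{k}}{p})$ is a complete valued ring of characteristic $0$ with value monoid $\zz_{\ge 0}$ and residue field $\yosmf{k}$, so by uniqueness it must coincide with $\yowitto{\yosmf{k}}$.

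Second, I would set up the data needed for the invocation of Theorem \ref{thm:main0}, whose notation I must overload. I take the base field (playing the role of $\yosmf{k}$ in Theorem \ref{thm:main0}) to be $\yogf{p}$, and the perfect extension field (playing the role of $\yosmf{l}$ there) to be the given $\yosmf{k}$ of the present theorem. Fix a complete system $\yorep\yosub \yopf{\zz}{\yosmf{k}}{p}$ of representatives of $\yosmf{k}$, choose a transcendence basis $B\yosub \yosmf{k}$ of $\yosmf{k}$ over $\yogf{p}$, which has cardinality $\theta$ by hypothesis, and put $\yorepc=\yorcff^{-1}(B)\cap \yorep$. Then $\card(\yorepc)=\card(B)=\theta$ and $\yorcff(\yorepc)=B$ is algebraically independent over $\yogf{p}$ by the very definition of a transcendence basis. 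This verifies the hypotheses \ref{item:h1}--\ref{item:h4} of Theorem \ref{thm:main0}.

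Applying Theorem \ref{thm:main0} to an arbitrary $H_{p}$-valued ultrametric space $(X, d)$ with $\card(X)\le \theta$ now yields a map $\yomainmap\colon X \to \yopf{\zz}{\yosmf{k}}{p}=\yowitt{\yosmf{k}}$ which, by condition \ref{item:conisom}, is an isometric embedding. The stronger conditions \ref{item:connzero}--\ref{item:concoef}, which elsewhere secure algebraic independence over a prescribed subfield, are not needed for the present statement. This establishes the first assertion of the theorem.

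For the deduction of Conjecture \ref{conj:bbb}, let $(X, d)$ be an arbitrary $H_{p}$-valued ultrametric space; set $\theta = \card(X)$ and choose $\yosmf{k}$ to be the perfect closure of the purely transcendental extension $\yogf{p}(\{\, t_{\alpha}\mid \alpha<\theta\, \})$, which is a perfect field of characteristic $p$ of transcendence degree $\theta$ over $\yogf{p}$. The first assertion provides an isometric embedding $X\to \yowitt{\yosmf{k}}$. Since $(\yowitt{\yosmf{k}}, \yowiv{\yosmf{k}})$ is a complete valued field extension of $(\yopnbf{p}, \yopnbv{p})$, it is automatically a commutative unitary non-Archimedean Banach algebra over $\yopnbf{p}$, as remarked in the paper. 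The main obstacle I foresee is precisely the identification $\yopf{\zz}{\yosmf{k}}{p}=\yowitt{\yosmf{k}}$; all other steps are routine applications of the machinery already developed in the excerpt.
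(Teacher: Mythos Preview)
Your proof is correct and follows essentially the same route as the paper. The paper invokes Corollary~\ref{cor:19main11} rather than Theorem~\ref{thm:main0} directly, but since that corollary is itself the specialization of the main construction to the base field $\yowitt{\yosmf{k}}$, the underlying mechanism is identical: take $G=\zz$, $\yoratio=p$, $(q,p)=(0,p)$, use a transcendence basis of $\yosmf{k}$ over $\yogf{p}$ to supply the set $\yorepc$, and land in $\youhaq{\zz}{\yosmf{k}}{0}{p}$. The identification $\yopf{\zz}{\yosmf{k}}{p}=\yowitt{\yosmf{k}}$ that you flag as the main obstacle is indeed needed implicitly in the paper's argument as well (it is only stated for $\yosmf{k}=\yogf{p}$), and your justification via the uniqueness in Proposition~\ref{prop:witt} combined with Proposition~\ref{prop:propppp} is the right one.
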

\begin{proof}
The theorem 
follows from 
Theorem 
\ref{thm:main0}. 
Theorem 
\ref{thm:frw} 
is actually an affirmative solution of 
Conjecture 
\ref{conj:bbb}
since 
for every cardinal 
$\yorepcard$, 
there exists a perfect field 
$\yosmf{k}$ 
of 
 characteristic 
$p$ 
such that 
the transcendental degree
of
 $\yosmf{k}$ 
over 
$\yogf{p}$ 
is equal to 
$\yorepcard$, 
and  since 
$\yowitt{\yosmf{k}}$ 
is a non-Archimedean  (commutative and unitary) Banach algebra 
over 
$\qq_{p}$
(see 
Proposition 
\ref{prop:embedcoef}). 
\end{proof}

\begin{rmk}
A solution of  Conjecture 
\ref{conj:bbb}
is given by 
fraction field 
$\yowitt{\yosmf{k}}$ of the ring of Witt vectors. 
\end{rmk}


\section{Isometric embeddings of generalized  ultrametric spaces}\label{sec:gen}

 In
 this section, 
 we
 briefly  
 explain 
an  analogue of
 Theorem 
  \ref{thm:19main2}
  for 
 ultrametric spaces 
 whose distances 
 take values in 
 a
general 
linearly ordered set. 
We 
 follow
 the 
 notations of 
  \cite{Ishiki2022highpower}.
 We refer the readers 
 to 
 \cite{MR1703500}, 
 \cite{MR3946544}, 
 and 
 \cite{Ishiki2022highpower}
 for 
the details of ultrametric spaces whose 
 distances take values in 
 linearly ordered sets.

 A linearly ordered set 
 $A$ 
 is said to 
 be 
 \emph{bottomed} 
 if 
 $A$ 
 has a minimum element. 
 In this case, we
 denote by 
 $\yobtm_{A}$
 the minimum element of 
 $A$.

Let us consider how 
we 
generalized 
our aforementioned arguments
to  ultrametrics taking values in 
linearly ordered sets. 
Let 
$G$ 
be a linearly ordered 
Abelian group
(not necessarily a subset of  
$\rr$). 
Wherever we use 
$-\log_{\yoratio}(y)$ 
and 
$\yoratio^{-x}$, 
it is enough to 
replace them 
with 
the 
antitone map 
$\yolga\colon (G, \le)\to (G, \le)$
defined by 
$x\mapsto -x$.
The 
greatest difficulty arises 
in 
arguments
of  Subsection 
\ref{conj:bbb},
where 
we
consider 
 the value 
 $d(\yoesp_{\lambda}, \yova_{\lambda})=
\inf_{x\in \yoesp_{\lambda}}d(x, \yova_{\lambda})$
since we 
use the Dedekind completeness
of
 $\rr$ 
 to define it. 
Let us see how we modify the arguments
using 
$d(\yoesp_{\lambda}, \yova_{\lambda})$. 
\begin{enumerate}

\item 
We can use the convex set 
$\bigcup_{a\in \yoesp_{\lambda}}(-\infty, \yolga(d(a, \yova_{\lambda})))$
with 
$m$
 in our arguments. 
We can replace 
$(-\infty, m)$
 with 
the subset 
$\bigcup_{a\in \yoesp_{\lambda}}(-\infty, \yolga(d(a, \yova_{\lambda})))$
of 
$G$.

\item 
In Case 1 in  Lemma 
\ref{lem:0dis}, 
the  definition of 
$\yosubmap_{\lambda+1}(\yova_{\lambda})$
only requires 
 the spherical completeness and 
$\{r_{i}\}_{i\in \zz_{\ge 0}}$.

\item 
In Case 2 of  Lemma 
\ref{lem:0dis}, 
essentially, we do not use 
$d(\yoesp_{\lambda}, \yova_{\lambda})$
since
 $d(\yoesp_{\lambda}, \yova_{\lambda})=d(a, \yova_{\lambda})$
for some 
$a\in \yoesp_{\lambda}$. 
\end{enumerate}
Thus
we can avoid 
the use of  
 the  Dedekind completeness
of 
$\rr$
in our theory. 
Therefore, we can generalize 
our aforementioned arguments 
for ultrametrics taking values in 
linearly ordered sets. 
In the notation of the following theorem, 
in the mixed characteristic case, 
that is,
 when 
$p>0$ 
and 
$K$ 
has characteristic 
$0$, 
we use the 
$p$-adic 
Mal'cev--Neumann field
whose distinguished copy of 
$\zz$ 
in 
the multiplicative value group is generated by 
$\lvert p\rvert$. 
Namely,  we obtain 
the following  analogue of 
Theorem 
\ref{thm:19main2}.
 \begin{thm}\label{thm:genmain}

Let 
$p$ 
be 
$0$
 or  a prime, 
$\yorepcard$ 
be a cardinal, 
and
$(K, \lvert *\rvert)$
 be a valued field
 whose residue class field
has characteristic 
$p$. 
Take 
 a bottomed  linearly ordered  set
 $R$  
 such that 
$R\setminus\{\yobtm_{R}\}$
 is 
a divisible linearly ordered Abelian 
(multiplicative) group
and 
$\{\, \lvert x\rvert \mid x\in K\,\}\subset R$. 
Then there exists a  valued field 
$(L, \lVert *\rVert)$ 
such that: 
\begin{enumerate}[label=\textup{(\arabic*)}]

\item 
the field 
$L$ 
is a valued field   extension of 
$K$;

\item 
$\{\, \lVert x\rVert\mid x\in L\, \} = R$;

\item 
for each 
$R$-valued ultrametric 
$(X, d)$ 
with either 
$X=\emptyset$ 
or 
$\card(X\sqcup\{\ast\})\le \yorepcard$ 
for some 
$\ast\notin X$,
there exists 
an isometric 
embedding 
$I\colon (X, d)\to (L, \lVert *\rVert)$
such that 
the set 
$I(X)$ 
is algebraically independent over 
$K$. 
\end{enumerate}
Moreover, 
for every 
$R$-valued
 ultrametric space
$(X, d)$, 
there exist a valued field 
$(F, \lVert*\rVert)$ 
and a map 
$\yomainmap\colon X\to F$
such that 
\begin{enumerate}[label=\textup{(\arabic*)}]
\item 
the map 
$\yomainmap\colon (X, d)\to (F, \lVert*\rVert)$ 
is an isometric embedding;

\item 
the field 
$(F, \lVert*\rVert)$ 
is a valued field   extension of 
$(K, \lvert*\rvert)$;

\item 
for
every
 $x\in F$, 
 we have
$\lVert x\rVert \in  R$;

\item 
$\yomainmap(X)$ 
is closed in 
$F$;

\item 
$\yomainmap(X)$ 
is algebraically independent 
over 
$K$;

\item 
if 
$(X, d)$
is complete
as
a uniform space, 
then 
$(F, \lVert*\rVert)$
can be chosen
to be complete. 

\end{enumerate}

 \end{thm}
 
   A \emph{generalized metric space} is 
  a set equipped with 
  a metric taking values in 
  a linearly ordered 
  Abelian group 
  $G$
  instead of 
  $\rr$. 

Arens and Eells'
original theorem 
\cite[Theorem]{MR81458}
contains the statement 
that 
every separable uniform space 
can be uniformly continuously  embedded into 
a locally convex linear space as
a closed subset. 
The following corollary 
is 
a generalization of it.

 \begin{cor}\label{cor:5252}
 Every generalized 
 metric space 
 that is not metrizable in the ordinary  sense
  can be 
uniformly continuously 
embedded into 
a valued field. 
 \end{cor}
 \begin{proof}
 In  \cite[Lemma 2.21]{Ishiki2022highpower}, 
 it is stated that 
 a generalized metric space
  that is not metrizable in the ordinary  sense
 is uniformly equivalent 
 to an ultrametric space 
 whose distances take 
 values in a linearly ordered set. 
 Combining this fact and 
 Theorem
  \ref{thm:genmain}, 
we observe that 
Corollary 
 \ref{cor:5252}
is 
true. 
 \end{proof}
 

\section{Questions}\label{sec:ques} 


As a  sophisticated version of 
a non-Archimedean 
Arens--Eells 
theorem 
for linear spaces, 
we ask the next question concerning 
ranges of ultrametrics. 
\begin{ques}
Let 
$R$, 
$T$ 
be  range sets, 
and 
$S$
 be a range set such that 
$S\setminus \{0\}$ 
is  a multiplicative subgroup of 
$\rr_{> 0}$. 
Take a non-Archimedean valued field  
$(K, \lvert*\rvert_{K})$ 
and 
assume that 
\begin{enumerate}

\item 
$R\cdot S=\{\, r\cdot s\mid r\in R, s\in S\, \}\yosub T$; 
\item 
$\lvert*\rvert_{K}$ 
is
 $S$-valued. 
\end{enumerate}
For every 
$R$-valued 
ultrametric space 
$(X, d)$, 
do there exist
a 
$T$-valued 
ultra-normed  linear space  
$(V, \lVert *\rVert_{L})$ 
over 
$(K, \lvert*\rvert_{K})$, 
and 
an isometric map 
$I\colon X\to V$?
\end{ques}


As a counterpart of 
Theorem 
\ref{thm:19main2}, 
we ask
whether 
any valued field with a
sufficiently large 
quotient residue field
admits 
isometric embeddings
from 
ultrametrics. 
\begin{ques}
Let 
$p$ 
be 
$0$ 
or  a prime, 
  $\yosmf{k}$
   be an infinite  field of characteristic 
   $p$, 
  and 
  $R$ be a range set such that 
  $R\setminus\{0\}$ 
  is a multiplicative 
  subgroup of 
  $\rr_{>0}$. 
Take  an 
arbitrary
  complete  valued field 
$(K, \lvert*\rvert_{K})$
with the residue class field 
$\yosmf{k}$
such that 
$\{\, \lvert x\rvert_{K}\, \mid x\in K\,  \}=R$ 
and assume that 
for some 
$\yoratio \in (1, \infty)$, 
$(K, \lvert*\rvert_{K})$
is a valued field extension of 
$(\qq, \yonabs{v_{\mathrm{triv}}}{\yoratio}{*})$ 
if $p=0$, where 
$v_{\mathrm{triv}}$ 
denotes the trivial valuation on 
$\qq$, 
and of 
$(\qq_{p}, \yonabs{v_{p}}{\yoratio}{*})$
if 
$p>0$.
For every 
$R$-valued  ultrametric  
space 
$(X, d)$ 
with 
$\card(X\sqcup\{\ast\})\le\card(\yosmf{k})$
for some 
$\ast\notin X$, 
does there exist an isometric embedding 
$I\colon X\to K$?
\end{ques}


\bibliographystyle{myplaindoidoi}
\bibliography{../../../bibtex/UU.bib}


\end{document}